\newtheorem{thm}{Theorem}[section]
\newtheorem{lemma}[thm]{Lemma}
\newtheorem{cor}[thm]{Corollary}
\newtheorem{prop}[thm]{Proposition}
\theoremstyle{definition}
\newtheorem{defn}[thm]{Definition}
\theoremstyle{remark}
\newtheorem{rem}[thm]{Remark}
\def\O{\mathcal{O}}
\def\A{\mathcal{A}}
\def\B{\mathcal{B}}
\def\G{\mathcal{G}}
\def\L{\mathcal{L}}
\def\M{\mathcal{M}}
\def\P{\mathcal{P}}
\def\Q{\mathcal{Q}}
\def\R{\mathcal{R}}
\def\p{\mathbb{P}}
\def\Ext{\mathrm{Ext}}
\def\Hom{\mathrm{Hom}}
\def\RHom{\mathrm{RHom}}
\def\Gal{\mathrm{Gal}}
\def\Gr{\mathrm{Gr}}
\def\Sym{\mathrm{Sym}}
\newcommand{\wt}{\widetilde}
\numberwithin{equation}{section}
\title{Derived Categories of Quintic Del Pezzo Fibrations}
\author{Fei Xie}
\address{School of Mathematics, University of Edinburgh, James Clerk Maxwell Building, Peter Guthrie Tait Road, Edinburgh, EH9 3FD, UK}
\email{fei.xie@ed.ac.uk}
\subjclass[2010]{14F05, 14D05, 14J60, 14M15}
\keywords{quintic del Pezzo surfaces, derived categories, semiorthogonal decompositions}
\begin{document}
\maketitle
%-------------Abstract--------------
\begin{abstract}
We provide a semiorthogonal decomposition for the derived category of fibrations of quintic del Pezzo surfaces with rational Gorenstein singularities. There are three components, two of which are equivalent to the derived categories of the base and the remaining non-trivial component is equivalent to the derived category of a flat and finite of degree 5 scheme over the base. We introduce two methods for the construction of the decomposition. One is the moduli space approach following the work of Kuznetsov on the sextic del Pezzo fibrations and the components are given by the derived categories of fine relative moduli spaces. The other approach is that one can realize the fibration as a linear section of a Grassmannian bundle and apply Homological Projective Duality.
\end{abstract}
%--------------Introduction--------
\section{Introduction}
In this paper, we study the structure of the bounded derived category of coherent sheaves of fibrations of quintic del Pezzo surfaces with rational Gorenstein singularities (equivalently, the minimal resolution of the surface is crepant). The aim is to find a semiorthogonal decomposition for the derived category. The paper is inspired by and follows the strategy of the work of Kuznetsov on fibrations of sextic del Pezzo surfaces \cite{kuzdp6}. 

Other families of del Pezzo surfaces that have been investigated are the cases of degree $9$ \cite{bersb}, of degree $8$ and $4$ \cite{kuzqfib}\cite{abbqfib} (note that a rational Gorenstein quartic del Pezzo surface over any field is a complete intersection of two quadrics in $\p^4$ \cite[Theorem 4.4(i)]{hwgdp} and rational Gorenstein singularity implies smoothness in degree $9$). A rational Gorenstein del Pezzo surface of degree $7$ is the blow-up of $\p^2$ or a nodal quadric \cite[Theorem 29.4]{cubicforms}\cite[Proposition 8.1]{ctssdp}. Hence, the case of degree $7$ can be reduced to higher degrees. Our work on the quintic case will complete the picture of del Pezzo fibrations of degree at least $4$.

Let $\mathcal{X}\to S$ be a flat family of del Pezzo surfaces over a smooth variety $S$. In aforementioned examples (degree $4,6,8,9$), there is an $S$-linear semiorthogonal decomposition of the type
\[D^b(\mathcal{X})=\langle\A_1,\dots,\A_d\rangle\]
where $d=3$ for degree $6,8,9$ and $d=2$ for degree $4$. The subcategory $\A_i$ is equivalent to the derived category of an algebraic variety $D^b(Z_i)$ or an Azumaya variety $D^b(Z_i, B_i)$ where $Z_i$ is flat over $S$ and in the cases of degree $6,8,9$, also finite over $S$. Here $B_i$ is a sheaf of Azumaya algebras over $Z_i$ and $D^b(Z_i,B_i)$ is the derived category of coherent sheaves of modules over $B_i$ or equivalently the derived category of $\beta_i$-twisted ($\beta_i$ represents the Brauer class of the algebra $B_i$) coherent sheaves on $Z_i$.

When $\mathcal{X}$ is a del Pezzo surface $X$ over a field $k$, that is, $S=\mathrm{Spec}(k)$. It is expected that if $X$ is rational, then all subcategories $\A_i$ should be equivalent to $D^b(Z_i)$, i.e. the Azumaya algebras $B_i$ that appear will be trivial. It is true, for example, when $X$ is a smooth del Pezzo surface over a field of degree at least $5$, see \cite{aberdp}. Because a quintic del Pezzo surface with rational Gorenstein singularities over a field is always rational (see \cite{sbratqdp} for the smooth case and \cite[Theorem 9.1(b)]{ctssdp} for the singular case), one could anticipate that no nontrivial Azumaya algebras would appear in the semiorthogonal decomposition of the quintic del Pezzo fibration and the main result of the paper confirms the anticipation.
\begin{thm}\label{main}
Let $f:\mathcal{X}\to S$  be a flat morphism where each fiber of $f$ is a quintic del Pezzo surface with rational Gorenstein singularities. Then there is an $S$-linear semiorthogonal decomposition 
\begin{equation}\label{sodmain}
D^b(\mathcal{X})=\langle D^b(S), D^b(S), D^b(\mathcal{Z})\rangle
\end{equation}
where $g:\mathcal{Z}\to S$ is flat and finite of degree $5$.

The projection functors of the decomposition have finite cohomological amplitudes and the decomposition is compatible with base change in the sense that for any morphism $h: T\to S$, the quintic del Pezzo fibration $f': \mathcal{X}_T=\mathcal{X}\times_S T\to T$ has an $T$-linear semiorthogonal decomposition
\[D^b(\mathcal{X}_T)=\langle D^b(T),D^b(T), D^b(\mathcal{Z}\times_S T)\rangle.\]
In particular, if $T$ is a geometric point of $S$, the components of the decomposition can be described explicitly by Theorem \ref{ssod}.
\end{thm}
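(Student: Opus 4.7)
The plan is to adapt Kuznetsov's moduli-of-sheaves strategy used for sextic del Pezzo fibrations \cite{kuzdp6}. First I would establish the pointwise semiorthogonal decomposition on each geometric fiber (the content of Theorem \ref{ssod}), which on a single quintic del Pezzo surface $X_0$ with rational Gorenstein singularities should take the form $D^b(X_0)=\langle \O_{X_0},\O_{X_0}(H),D^b(Z_0)\rangle$, with $H$ the anticanonical class and $Z_0$ a zero-dimensional scheme of length $5$ arising as a fine moduli space of suitable exceptional or semistable sheaves on $X_0$. In the smooth case the five objects should match a sub-collection of a length $7$ full exceptional collection; in the singular case the exceptional objects must be replaced by appropriate semistable sheaves so that the moduli functor continues to represent a length $5$ scheme through the degeneration.

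Next I would globalize. The first two components are produced by pullback from $S$ twisted by $\O_\mathcal{X}$ and by a relative anticanonical class, using that $f_*\O_\mathcal{X}=\O_S$ and that $f_*\omega_{\mathcal{X}/S}^{-1}$ is locally free of rank $6$; both yield $S$-linear fully faithful embeddings $D^b(S)\hookrightarrow D^b(\mathcal{X})$. For the third component one constructs the relative moduli space $g:\mathcal{Z}\to S$ of the family of fiberwise objects identified in the previous step. Flatness of $f$, constancy of the numerical invariants across fibers, and representability of the moduli functor should produce $\mathcal{Z}$ as a scheme flat and finite of degree $5$ over $S$, together with a universal sheaf $\mathcal{U}$ on $\mathcal{Z}\times_S\mathcal{X}$; the associated Fourier--Mukai kernel supplies $\Phi_\mathcal{U}:D^b(\mathcal{Z})\to D^b(\mathcal{X})$.

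Then I would verify that the three $S$-linear functors so obtained are fully faithful, pairwise semiorthogonal in the prescribed order, and generate $D^b(\mathcal{X})$. Fully faithfulness and the semiorthogonality relations reduce, via compatible $\RHom$ computations and proper base change, to the known fiberwise assertions from Theorem \ref{ssod}. Generation follows by the standard principle that for proper flat $f$, once each kernel has finite cohomological amplitude, a collection of $S$-linear admissible subcategories whose fiberwise restrictions span every $D^b(X_s)$ spans $D^b(\mathcal{X})$ globally. The base change compatibility with an arbitrary $h:T\to S$ is then automatic, because each of the three kernels is $S$-flat and $\mathcal{Z}$ is constructed as a fine moduli space, so formation of $\mathcal{Z}$ and of $\mathcal{U}$ commutes with $h$; the bounded cohomological amplitude of the projection functors reduces to the $S$-flatness of $\mathcal{Z}$ and the two copies of $S$.

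The \emph{main obstacle} I expect is the careful treatment of the singular fibers. The admissible fibers carry du Val singularities of several possible types, and one must ensure that the chosen moduli functor remains representable, fine, and of constant length $5$ as the fibers degenerate, without jumps in dimension or the appearance of non-trivial automorphisms of the parameterized sheaves. Producing a flat family of semistable sheaves with the right specialization on every singular fiber is the technical heart of the argument. It is precisely here that the alternative route via Homological Projective Duality becomes attractive: realizing $\mathcal{X}$ as the zero locus of a section of a rank $3$ bundle on a relative Grassmannian $\Gr_S(2,\mathcal{E})$ with $\mathcal{E}$ a rank $5$ bundle built from $f_*\omega_{\mathcal{X}/S}^{-1}$, and invoking Kuznetsov's HPD for $\Gr(2,5)$, automatically yields flat relative data and bypasses direct deformation-theoretic control on the singular fibers.
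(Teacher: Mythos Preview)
Your overall strategy---follow Kuznetsov's moduli-space approach for sextic del Pezzo fibrations, establish the fiberwise decomposition first, then globalize via relative moduli spaces and universal families---is exactly what the paper does. But there is a genuine error in the shape of the fiberwise decomposition you propose. The second component is \emph{not} generated by $\O_{X_0}(H)$ for $H$ anticanonical; it is generated by a rank-$2$ vector bundle $F$ (Theorem~\ref{ssod}), obtained as the pushforward from the minimal resolution of a specific nontrivial extension. The pair $\langle\O_X,\O_X(-K_X)\rangle$ has a different orthogonal complement, and there is no reason for that complement to be equivalent to $D^b$ of a length-$5$ scheme or to admit the moduli interpretation you need for globalization.

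This matters because getting the correct object $F$ is where the real work lies. In the paper, \emph{both} nontrivial components come from relative moduli spaces: $\M_2(\mathcal{X}/S)\cong S$ with universal family $\mathcal{E}_2$ restricting fiberwise to $F$, and $\M_3(\mathcal{X}/S)\cong\mathcal{Z}$ with universal family $\mathcal{E}_3$. The principal obstacle is not so much the singular fibers---those are handled uniformly by the classification in Theorem~\ref{ssod} and the stability analysis in Lemma~\ref{semistable}---but rather proving that these relative moduli spaces are \emph{fine}, i.e., that the Brauer obstruction to a universal family vanishes. For $\M_3$ this is automatic since $h_3$ takes coprime values; for $\M_2$ the values of $h_2(t)=5(t+1)^2$ are all divisible by $5$, and one must show directly that $F$ descends from $X_{k_s}$ to any field $k$ (Lemma~\ref{gdf}), using Galois-invariance, global generation, and the rationality of quintic del Pezzo surfaces. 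By proposing the line bundle $\omega^{-1}$ as the second kernel, you bypass exactly the place where the argument has content. (The HPD alternative does not avoid this either: the embedding of $\mathcal{X}$ into a Grassmannian bundle uses the rank-$5$ bundle $f_*\mathcal{E}_2$, not $f_*\omega_{\mathcal{X}/S}^{-1}$, so one still needs $F$.)
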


When $\mathcal{X}$ is a quintic del Pezzo surface $X$ with rational Gorenstein singularities over an algebraically closed field, the semiorthogonal decomposition of $D^b(X)$ is obtained by applying \cite{kkssurf}. More concretely, we consider the minimal resolution $\wt{X}$ of $X$ and use a semiorthogonal decomposition of $D^b(\wt{X})$ that is compatible with the contraction $\pi: \wt{X}\to X$. In such a way, the decomposition of $D^b(X)$ can be derived from that of $D^b(\wt{X})$ via $\pi_*$. It turns out that the description only depends on the singular type of $X$. Section \ref{dcqdp} presents the process and the result is given in Theorem \ref{ssod}. 

Moreover, the embedding functors of the components of the above decomposition are given by Fourier-Mukai functors with kernels $\O_X$, a rank $2$ vector bundle $F$ and a rank $5$ vector bundle $\Q$ on $X$ respectively. In section \ref{moduli}, we give a moduli space interpretation for this decomposition. Namely, the interesting components are equivalent to the derived categories of the fine moduli spaces $\M_d(X)$ of semistable sheaves with given Hilbert polynomials $h_d(t), d\in\{2,3\}$ (see (\ref{hilb}) for definitions) and the kernels of the embedding functors are isomorphic to the respective universal families (Theorem \ref{ms}).

To produce a semiorthogonal decomposition for the fibration $f:\mathcal{X}\to S$, we consider the relative moduli spaces $\M_d(\mathcal{X}/S)$ of semistable sheaves with the same Hilbert polynomials $h_d(t)$ and show that they are also fine moduli spaces (Proposition \ref{msf}). Comparing with the case of a single quintic del Pezzo surface, we deduce that the derived categories of these fine relative moduli spaces give the components of (\ref{sodmain}) and the universal families are the kernels of the embedding functors. It is explained in section \ref{msapproach}.

Essentially, to prove that the relative moduli spaces $\M_d(\mathcal{X}/S)$ are fine, one needs to show that the moduli spaces $\M_d(X)$ of a quintic del Pezzo surface $X$ over an arbitrary field $k$ are fine, namely the vector bundles $F, \Q$ on $X\times_k k_s$ (base changed to the separable closure $k_s$) descend to $X$. The case for $\Q$ follows easily from an arithmetic reason (values of the Hilbert polynomial) and the case for $F$ requires the geometry of $X$ (e.g., $X$ has a rational point). Section \ref{global generation},\ref{galois descent} study the properties of $F$ and deduce that it is globally generated and descends to $X$.

Alternatively, we produce a semiorthogonal decomposition of the fibration using the theory of Homological Projective Duality (HPD). In section \ref{hpdapproach}, we provide two constructions for the fibration $f:\mathcal{X}\to S$ to be a linear section of a Grassmannian bundle. The first construction uses the universal family $\mathcal{E}_2$ of the relative moduli space $\M_2(\mathcal{X}/S)$ which induces a morphism from $\mathcal{X}$ to the Grassmannian bundle over $S$. With this construction, HPD produces a semiorthogonal decomposition of $D^b(\mathcal{X})$ which is the same as the one obtained from the moduli space approach, see Theorem \ref{main2}. The second construction uses a vector bundle related to the normal bundle of the anticanonical embedding. In section \ref{coincide}, we prove that these two constructions are isomorphic in characteristic $0$. Consequently, we obtain a relation between the universal family $\mathcal{E}_2$ and the normal bundle of the anticanonical embedding, see Theorem \ref{identical}.

Finally, in order to apply HPD, we need a Lefschetz type semiorthogonal decomposition for $D^b(\Gr(2,5))$. It was only known in characteristic $0$. In the appendix, we verify that it still holds in large characteristic (Proposition \ref{lefschetzgr}) and it is achieved by performing mutations to the Kapranov's collection.

For the convenience of the reader, in section \ref{preliminaries}, we include basic facts about quintic del Pezzo surfaces with rational Gorenstein singularities as well as notions and results of derived categories related to the base change of semiorthogonal decompositions.
%-------Notations------------------------
\subsection*{Notations}
Denote by $D(Y), D^-(Y), D^+(Y),D^b(Y)$ the unbounded, bounded above, bounded below and bounded derived categories of quasi-coherent sheaves on the scheme $Y$ with coherent cohomology.  Given $G\in D(Y)$, denote the $p$-th sheaf cohomology of $G$ by $\mathcal{H}^p(G)$. For $p,q\in\mathbb{Z},p\leqslant q$, write $D^{[p,q]}(Y)=\{G\in D(Y)\,|\, \mathcal{H}^i(G)=0, i\notin [p,q] \}$. For a triangulated subcategory $T$ of $D^b(Y)$, denote its right orthogonal (resp. its left orthogonal) by $T^{\perp} = \{G\in D^b(Y)\,|\,\RHom(A,G) = 0, \forall A\in T\}$ (resp. ${}^\perp T=\{G\in D^b(Y)\,|\, \RHom(G,A) = 0, \forall A\in T\}$). 

For a morphism $f: Y\to W$, denote by $f^*, f_*$ the derived pull-back and push-forward. The usual pull-back and push-forward of morphisms will be denoted by $L_0f^*, R^0f_*$.
\subsection*{Acknowdgements}
I would like to thank Marcello Bernardara and Alexander Kuznetsov for helpful conversations. Especially I thank Kuznetsov for the idea of the second construction in \S\ref{construction2} and for pointing out a mistake in a previous draft. I am thankful for the encouragement and support from Charles Vial. Finally, I thank the referee for helpful comments. I was partially supported by the Bielefeld Young Researchers' Fund for this work. 
%--------Preliminaries-------------------------
\section{Preliminaries}\label{preliminaries}
%--------Quintic Del Pezzo Surfaces-------
\subsection{Quintic Del Pezzo Surfaces}\label{qdp}
Assume $k$ is an algebraically closed field. 

We recall some basic properties of quintic del Pezzo surfaces. For more details, see \cite[\S 8.5]{dolcag}\cite[\S 25-26]{cubicforms}. Let $X$ be a quintic del Pezzo surface over $k$ with rational Gorenstein singularities (over algebraically closed fields, rational Gorenstein singularity is equivalent to du Val, ADE singularity or rational double point). Let $\pi:\wt{X}\to X$ be its minimal resolution. Then $\pi$ is crepant ($\pi^*K_X=K_{\wt{X}}$), $\wt{X}$ is a weak del Pezzo surface ($-K_{\wt{X}}$ is nef and big) and we have
\[\wt{X}=X_5\to X_4\to\dots\to X_1=\p^2\]
where $X_{i+1}\to X_i$ is the blow-up of $X_i$ at the point $x_i$. Let $h$ be the hyperplane class on $\mathbb{P}^2$ as well as its pull-back to $X_i, i\geqslant 2$. Denote by $e_i,1\leqslant i \leqslant 4$ the classes of pull-backs of exceptional divisors $E_i$ over $x_i$ to $X_j, j\geqslant i$. Then $\mathrm{Pic}(\wt{X})=\mathbb{Z} h\oplus\bigoplus_{i=1}^4 \mathbb{Z} e_i$. The canonical divisor $K_{\wt{X}}=-3h+\sum_{i=1}^4 e_i$ and $h^2=1, e_i^2=-1, h.e_i=0, e_i.e_j=0$ for $i\neq j$. The orthogonal complement $R=K_{\wt{X}}^{\perp}\subset\mathrm{Pic}{\wt{X}}\otimes_{\mathbb{Z}}\mathbb{R}$ equipped with the scalar product (intersection product but with the opposite sign) is the root system $A_4$. The simple roots are $e_1-e_2, e_2-e_3, e_3-e_4, h-e_1-e_2-e_3$ and the Weyl group is the permutation group $S_5$.

The possible configurations of the points $x_i$ are (the notation $x>y$ represents that $x$ is an infinitely near point over $y$):

(I) $x_1,x_2,x_3,x_4$ are proper points of $\p^2$;

(II) $x_2>x_1, x_3,x_4$;

(III) $x_2>x_1, x_4>x_3$;

(IV) $x_3>x_2>x_1, x_4$;

(V) $x_4>x_3>x_2>x_1$.

Write $\Delta_{ijl}\in |h-e_i-e_j-e_l|$ and $\Delta_{ij}\in |e_i-e_j|$ for members in respective classes. The
singular types of $X$ and the corresponding sets $\Delta$ of $(-2)$-curves (effective $(-2)$-classes) on $\wt{X}$ are:

(I.1) smooth; (I.2) $A_1$ with $\Delta_{123}$;

(II.1) $A_1$ with $\Delta_{12}$; (II.2) $A_1+A_1$ with $\Delta_{12},\Delta_{123}$; (II.3) $A_2$ with $\Delta_{12},\Delta_{134}$;

(III.1) $A_1+A_1$ with $\Delta_{12}, \Delta_{34}$;  (III.2) $A_1+A_2$ with $\Delta_{12}, \Delta_{34},
\Delta_{123}$;

(IV.1) $A_2$ with $\Delta_{12}, \Delta_{23}$;  (IV.2) $A_1+A_2$ with $\Delta_{123}, \Delta_{12},
\Delta_{23}$;  (IV.3) $A_3$ with $\Delta_{12}, \Delta_{23}, \Delta_{124}$;

(V.1) $A_3$ with $\Delta_{12}, \Delta_{23}, \Delta_{34}$;  (V.2) $A_4$ with $\Delta_{12}, \Delta_{23}, \Delta_{34},\Delta_{123}$.
%-----------Derived Categories--------------
\subsection{Derived Categories}
For the homological background, one can refer to \cite[\S2]{kuzhs}. All schemes are noetherian. 

We recall the notions of Tor-/Ext-/cohomological amplitudes. Let $f: Y\to W$ be a proper morphism of separated schemes. The right adjoint $f^!$ of $f_*$ exists \cite{ngd}. An object $G\in D(Y)$ has \textit{finite} Tor-\textit{amplitude} over $W$ if there exist $p,q\in\mathbb{Z}$ such that $G\otimes_{\O_Y} f^*H\in D^{[a+p,b+q]}(Y)$ for any $H\in D^{[a,b]}(W)$. An object $G\in D(Y)$ has \textit{finite} Ext-\textit{amplitude} over $W$ if there exist $p,q\in\mathbb{Z}$ such that $R\mathcal{H}om_{\O_Y}(G, f^!H)\in D^{[a+p,b+q]}(Y)$ for any $H\in D^{[a,b]}(W)$. Given schemes $Y, W$, let $T\subset D(Y)$ be a triangulated subcategory. A triangulated functor $\Phi: T\to D(W)$ has \textit{finite cohomological amplitude} if for any $a,b\in\mathbb{Z}$, there exist $p,q\in\mathbb{Z}$ not depending on $a,b$ such that $\Phi(T\cap D^{[a,b]}(Y))\subset D^{[a+p,b+q]}(W)$.

Let $Y, W$ be proper schemes. Let $K\in D^-(Y\times W)$ and $p: Y\times W\to Y, q: Y\times W\to W$ be projections. Define functors $\Phi_K=q_*(p^*-\otimes_{\O_{Y\times W}}K): D^-(Y)\to D^-(W)$ and $\Phi_K^!=p_*R\mathcal{H}om_{\O_{Y\times W}}(K, q^!-): D^+(W)\to D^+(Y)$.
\begin{lemma}{\cite[Lemma 2.4]{kuzhs}\cite[Lemma 2.10]{kuzbc}}
If $K$ has finite Tor-amplitude over $Y$ and finite Ext-amplitude over $W$, then 

(i) $\Phi_K$ takes $D^b(Y)$ to $D^b(W)$, $\Phi_K^!$ takes $D^b(W)$ to $D^b(Y)$ and $\Phi^!$ is the right adjoint of $\Phi_K$;

(ii) $\Phi_K, \Phi_K^!$ have finite cohomological amplitudes.
\end{lemma}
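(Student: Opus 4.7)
The plan is to propagate the amplitude hypotheses through each step in the definitions of $\Phi_K$ and $\Phi_K^!$ and then to establish adjointness by a chain of three standard adjunctions.

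For the boundedness and finite cohomological amplitude of $\Phi_K$, I would take $G\in D^{[a,b]}(Y)$ and track its image. Since $p$ is flat, $p^*G\in D^{[a,b]}(Y\times W)$; the finite Tor-amplitude of $K$ over $Y$ directly yields $p^*G\otimes_{\O_{Y\times W}} K\in D^{[a+c_1,b+c_2]}(Y\times W)$ for fixed $c_1,c_2$; and since $Y$ is proper, the projection $q$ is proper and $q_*$ has bounded cohomological dimension. Composing these bounds gives $\Phi_K(G)\in D^b(W)$ with shifts independent of $a,b$, which yields both halves of the claim at once. Dually, for $\Phi_K^!$, I would take $H\in D^{[a,b]}(W)$ and use that $q^!$ has finite cohomological amplitude (a standard consequence of properness over a noetherian base) to bound $q^!H$, then invoke the finite Ext-amplitude of $K$ over $W$ to control $R\mathcal{H}om(K,q^!H)$, and finally apply boundedness of $p_*$ to land in $D^b(Y)$.

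The adjunction $\Phi_K\dashv\Phi_K^!$ then follows from the chain
\begin{align*}
\Hom(\Phi_K(G),H) &\cong \Hom(p^*G\otimes K,\, q^!H) \\
&\cong \Hom(p^*G,\, R\mathcal{H}om(K,q^!H)) \\
&\cong \Hom(G,\, \Phi_K^!(H)),
\end{align*}
where the first isomorphism is Grothendieck--Verdier duality for the proper morphism $q$, the second is tensor-Hom adjunction on $Y\times W$, and the third is the usual $(p^*,p_*)$ adjunction (using flatness of $p$). The amplitude control from the previous step ensures that every intermediate object lies in a bounded derived category where these adjunctions are known to hold.

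The main obstacle I expect is the unbounded-below nature of $K\in D^-(Y\times W)$: a priori, neither the derived tensor product $-\otimes K$ nor the derived sheaf Hom $R\mathcal{H}om(K,-)$ is well-defined on all of $D^b$, so the finite Tor- and Ext-amplitude hypotheses must be invoked from the very outset to guarantee that each intermediate complex lands in the appropriate bounded category and that the adjunction isomorphisms above are valid. Since the statement coincides with \cite[Lemma 2.4]{kuzhs} and \cite[Lemma 2.10]{kuzbc}, I would lean on those references for the finest technical bookkeeping, in particular for the precise cohomological behavior of $q^!$ over a possibly singular base.
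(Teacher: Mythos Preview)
The paper does not give its own proof of this lemma; it simply cites \cite[Lemma 2.4]{kuzhs} and \cite[Lemma 2.10]{kuzbc} and moves on. Your outline is the standard argument one finds in those references: propagate boundedness through $p^*$, $-\otimes K$, $q_*$ (respectively $q^!$, $R\mathcal{H}om(K,-)$, $p_*$) using flatness, the amplitude hypotheses, and properness, and then chain together Grothendieck--Verdier duality, tensor--Hom adjunction, and $(p^*,p_*)$ adjunction. So there is nothing to compare against in this paper, and your sketch is correct and matches the approach of the cited sources.
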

\begin{lemma}
Assume $D^b(Y)=\langle T_1,\dots, T_n\rangle$ is a semiorthogonal decomposition of a scheme $Y$ by right admissible subcategories $T_i$, that is, the embedding functors $\beta_i: T_i\hookrightarrow D^b(Y)$ have right adjoint functors, which we denote by $\beta_i^!: D^b(Y)\to T_i$. If for $i\geqslant 2, \beta_i^!$ have finite cohomological amplitudes, then so are projection functors $\gamma_j: D^b(Y)\to D^b(Y)$ to the $j$-th component $T_j$ for all $j$. 
\end{lemma}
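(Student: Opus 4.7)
The plan is to prove the result by induction on $n$, peeling off the rightmost component at each stage via its right adjoint. The base case $n=1$ is trivial since $\gamma_1 = \mathrm{id}$.

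For $n \geq 2$, set $T_{<n} := \langle T_1, \dots, T_{n-1}\rangle$. For any $G \in D^b(Y)$, the counit of the adjunction $\beta_n \dashv \beta_n^!$ yields a distinguished triangle
\[ \beta_n \beta_n^!(G) \to G \to L_{<n}(G) \to \beta_n \beta_n^!(G)[1], \]
and applying $\Hom(A,-)$ for $A \in T_n$ shows that $L_{<n}(G) \in T_n^{\perp} = T_{<n}$. This identifies $\gamma_n = \beta_n \beta_n^!$, which has finite cohomological amplitude by hypothesis (since $n \geq 2$ and the inclusion $\beta_n$ is $t$-exact); the cone functor $L_{<n}$ therefore also has finite cohomological amplitude, being the cone of a morphism between objects of bounded amplitude.

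I then apply the inductive hypothesis to the shorter decomposition $T_{<n} = \langle T_1, \dots, T_{n-1}\rangle$. Since $T_{<n}$ is a full subcategory of $D^b(Y)$, the right adjoint of $\beta_i: T_i \hookrightarrow T_{<n}$ is just the restriction $\beta_i^!|_{T_{<n}}$ by the universal property of adjoints, and for $i \geq 2$ it inherits finite cohomological amplitude. Induction then supplies projection functors $\gamma_j^{T_{<n}}$ of finite cohomological amplitude for $j < n$, and the filtration of $G$ refines the triangle above to give $\gamma_j = \gamma_j^{T_{<n}} \circ L_{<n}$ for $j < n$, a composition of functors of finite cohomological amplitude. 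The only (essentially notational) subtlety is that although the hypothesis excludes $i = 1$, the iterative construction only ever invokes $\beta_i^!$ for $i = 2, \dots, n$, never reaching $T_1$; the projection $\gamma_1(G)$ is simply what remains after all later components have been peeled off, so the finite-amplitude assumption on $\beta_i^!$ for $i \geq 2$ is precisely what is required.
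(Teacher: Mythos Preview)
Your argument is correct and is essentially the same as the paper's: both peel off the top component via the triangle $\beta_n\beta_n^!G \to G \to (\text{remainder})$, note that $\gamma_n=\beta_n\beta_n^!$ and hence the remainder functor have finite amplitude, and iterate downward. The only cosmetic difference is that the paper unwinds this recursion directly (writing the remainder as $\alpha_{n-1}\alpha_{n-1}^*$ with $\alpha_i$ the inclusion of $\langle T_1,\dots,T_i\rangle$), whereas you phrase it as induction on $n$; strictly speaking your inductive hypothesis is stated for $D^b(Y)$ of a scheme while $T_{<n}$ need not be of this form, so either generalize the statement to arbitrary triangulated subcategories of $D^b(Y)$ or unwind the induction as the paper does.
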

\begin{proof}
Denote the embedding $\langle T_1,\dots,T_i\rangle\hookrightarrow D^b(Y)$ by $\alpha_i$ and its left adjoint by $\alpha_i^*$. Then $\gamma_i=\beta_i\circ\beta_i^!|_{\langle T_1,\dots, T_i\rangle}\circ\alpha_i^*=\beta_i\circ\beta_i^!\circ\alpha_i\circ\alpha_i^*$. In particular, $\gamma_n=\beta_n\circ\beta_n^!$ and $\gamma_1=\alpha_1\circ\alpha_1^*$. The semiorthogonal decomposition provides the exact triangles for each $G\in D^b(Y)$:
\[\gamma_n G\to G\to \alpha_{n-1}\alpha_{n-1}^*G,\]
\[\gamma_{n-1}G\to\alpha_{n-1}\alpha_{n-1}^*G\to \alpha_{n-2}\alpha_{n-2}^*G,\text{etc.}\]
Recursively, we deduce $\alpha_i\circ\alpha_{i}^*$ and thus $\gamma_i$ have finite cohomological amplitudes.
\end{proof}
We give a special version of the base change of semiorthogonal decompositions, which follows from Theorem 5.6,6.4 in \cite{kuzbc}.
\begin{prop}\label{bc}
Let $f: Y\to S, f_1: Y_1\to S,\dots, f_n: Y_n\to S$ be flat projective morphisms. Assume $K_i\in D^-(Y\times_S Y_i)$ have finite Tor-amplitudes over $Y$ and finite Ext-amplitudes over $Y_i$ for all $i$. Let $g: T\to S$ be any base change. Denote base change along $g$ by subscript $T$, i.e. $K_{iT}\in D^-(Y_T \times_T Y_{iT})$. If there is an $S$-linear semiorthogonal decomposition 
\begin{equation}\label{sody}
D^b(Y)=\langle\Phi_{K_1}(D^b(Y_1)),\dots,\Phi_{K_n}(D^b(Y_n))\rangle,
\end{equation}
then the projection functors of (\ref{sody}) have finite cohomological amplitudes and there is a $T$-linear semiorthogonal decomposition
\[D^b(Y_T)=\langle\Phi_{K_{1T}}(D^b(Y_{1T})),\dots,\Phi_{K_{nT}}(D^b(Y_{nT}))\rangle.\]
\end{prop}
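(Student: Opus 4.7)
The plan is to reduce the statement to Theorems 5.6 and 6.4 of \cite{kuzbc} after verifying the cohomological-amplitude hypothesis on projection functors.

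First I would unpack the first lemma above for each index: since $K_i$ has finite Tor-amplitude over $Y$ and finite Ext-amplitude over $Y_i$, the Fourier--Mukai functor $\Phi_{K_i}: D^b(Y_i)\to D^b(Y)$ lands in the bounded category, admits $\Phi_{K_i}^!$ as right adjoint, and both functors have finite cohomological amplitude. Because the $S$-linear decomposition identifies $\Phi_{K_i}(D^b(Y_i))$ with the $i$-th component and $\Phi_{K_i}$ is fully faithful, the right adjoint of the embedding of this component can be identified with $\Phi_{K_i}^!$ and therefore also has finite cohomological amplitude. Applying the second lemma above to the present situation then shows that every projection functor $\gamma_j: D^b(Y)\to D^b(Y)$ associated with~(\ref{sody}) has finite cohomological amplitude.

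With this in hand I would invoke Kuznetsov's base change machinery. Theorem~5.6 of \cite{kuzbc} guarantees that, under the Tor/Ext-amplitude hypotheses on $K_i$, the derived pullback kernel $K_{iT}\in D^-(Y_T\times_T Y_{iT})$ inherits the same amplitude properties and the resulting Fourier--Mukai functor $\Phi_{K_{iT}}: D^b(Y_{iT})\to D^b(Y_T)$ is again fully faithful whenever $\Phi_{K_i}$ is. Theorem~6.4 of \cite{kuzbc} then assembles such a family of base-changed fully faithful kernel functors into a $T$-linear semiorthogonal decomposition of $D^b(Y_T)$, precisely under the condition that the projection functors of the source decomposition have finite cohomological amplitude. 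Combining these two inputs yields the announced decomposition.

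The hard part is concealed inside the cited theorems. Since $g$ is not assumed flat, the fibre products $Y_T$ and $Y_{iT}$ are best understood via the derived base change of $K_i$, and one must check that semiorthogonality between the components survives this operation, that adjunctions commute with the base change, and that fully faithfulness is preserved; all of this delicate bookkeeping is handled in \cite[Theorems~5.6, 6.4]{kuzbc}. The only extra input required beyond the stated amplitude assumptions on the $K_i$ is the finiteness of cohomological amplitudes of the projection functors of~(\ref{sody}), which is exactly what the first step supplies.
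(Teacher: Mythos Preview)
Your proposal is correct and follows exactly the paper's approach: the proposition is stated as a specialization of Theorems~5.6 and~6.4 in \cite{kuzbc}, and the only thing left to verify is that the projection functors of (\ref{sody}) have finite cohomological amplitude, which you deduce from the two preceding lemmas just as the paper does. Your write-up is simply a more detailed unpacking of the paper's two-line proof.
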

\begin{proof}
We only need to check the projection functors of (\ref{sody}) have finite cohomological amplitudes. This follows from $\Phi_{K_i}^!$ having finite cohomological amplitudes by previous lemmas.
\end{proof}
%---------Derived category of a quintic del Pezzo surface------------
\section{Derived Category of a Quintic Del Pezzo Surface}\label{dcqdp}
Assume $k$ is an algebraically closed field. We adopt the same notation as \S\ref{qdp}.

To find a semiorthogonal decomposition of $D^b(X)$, we will apply the method in \cite{kkssurf}, which is a generalization of \cite[\S3]{kuzdp6}. In order to utilize the method, we will first verify that the decomposition of $D^b(\wt{X})$ given in \cite[Proposition 4.2]{karnog} when $X$ is smooth (thus $\wt{X}=X$) also works for any weak del Pezzo surface and then check that the decomposition is \textit{compatible with the contraction} $\pi$.

Since $X_2$ is the blow-up of $\p^2$ at a point, we have a semiorthogonal decomposition
\begin{equation*}
\begin{array}{rl}
D^b(X_2) &= \langle \O_{e_1}(-1), \O_{X_2}(-h), \O_{X_2}, \O_{X_2}(h)\rangle\\
		  &= \langle\O_{X_2}(-h), \O_{X_2}(e_1-h), \O_{X_2}, \O_{X_2}(h)\rangle\\
\end{array}
\end{equation*}
where the right mutation of the pair $(\O_{e_1}(-1), \O_{X_2}(-h))$ produces the second equality. Repeating the process, we obtain
\begin{equation*}
\begin{array}{l}
D^b(\wt{X})\\
= \langle\O_{\wt{X}}(-h), \O_{\wt{X}}(e_4-h), \O_{\wt{X}}(e_3-h), \O_{\wt{X}}(e_2-h), \O_{\wt{X}}(e_1-h), \O_{\wt{X}}, \O_{\wt{X}}(h)\rangle\\
= \langle\O_{\wt{X}}(e_4-h), \O_{\wt{X}}(e_3-h), \O_{\wt{X}}(e_2-h), \O_{\wt{X}}(e_1-h), \O_{\wt{X}}, \O_{\wt{X}}(h), \O(-K_{\wt{X}}-h)\rangle 
\end{array}
\end{equation*}
where the second equality is achieved by mutating $\O_{\wt{X}}(-h)$ from the leftmost to the rightmost position and the effect of the mutation is tensoring by $\O_{\wt{X}}(-K_{\wt{X}})[-2]$. Next, in the order of $i=4$ to $1$, mutate $\O_{\wt{X}}(e_i-h)$ to the rightmost position. Finally, mutate $\O_{\wt{X}}(-K_{\wt{X}}-h)$ to the left side of $\O_{\wt{X}}(h)$. Therefore, we obtain the following semiorthogonal decomposition
\begin{equation}\label{sodwdp}
D^b(\wt{X})= \langle\wt{\A}_1, \wt{\A}_2,\wt{\A}_3\rangle.
\end{equation}
Here $\wt{\A}_1= \langle\O_{\wt{X}}\rangle, \wt{\A}_2=\langle\wt{F}\rangle$ where $\wt{F}$ is
the unique nontrivial extension of 
\begin{equation}\label{uex}
0\to\O_{\wt{X}}(-K_{\wt{X}}-h)\to \wt{F}\to\O_{\wt{X}}(h)\to 0
\end{equation}
and
\begin{equation}\label{a3}
\wt{\A}_3=\langle\O_{\wt{X}}(h),\O(e_4-K_{\wt{X}}-h),\O(e_3-K_{\wt{X}}-h),\O(e_2-K_{\wt{X}}-h),\O(e_1-K_{\wt{X}}-h)\rangle.
\end{equation}
Furthermore, the push-forward of the resolution map $\pi_*: D^b(\wt{X})\to D^b(X)$ is essentially surjective with
$\ker(\pi_*)=\langle\O_{\Delta}(-1)\rangle^\oplus$ where $\Delta$ ranges through the set of
$(-2)$-curves and $\langle\rangle^{\oplus}$ denotes the minimal triangulated subcategory closed under infinite direct sums \cite[Lemma 2.3 and Corollary 2.5]{kuzdp6}. 
\begin{lemma}{\cite[Lemma 2.5]{kkssurf}}\label{pf}
Let $Y$ be a normal surface with rational singularities and let $p: \wt{Y}\to Y$ be its resolution. Let $\wt{G}\in D^b(\wt{Y})$. Then the following properties are equivalent:

(1) for any irreducible exceptional divisor $E$ of $p$ one has $\wt{G}|_E\in \langle\O_E\rangle$;

(2) for any irreducible exceptional divisor $E$ of $p$ one has $\Ext^*(\wt{G}|_E, \O_E(-1))=0$;

(3) there exists $G\in D^{perf}(Y)$ a perfect complex such that $\wt{G}\cong p^*G$;

(4) one has $p_*\wt{G}\in D^{perf}(Y)$ and $\wt{G}\cong p^*(p_*\wt{G})$.

In addition, if $\wt{G}$ is a pure sheaf or a locally free sheaf, then so is $p_*\wt{G}$.
\end{lemma}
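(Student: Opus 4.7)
The plan is to run the circle (3)$\Leftrightarrow$(4)$\Rightarrow$(1)$\Leftrightarrow$(2)$\Rightarrow$(3), leveraging the defining property of rational singularities, $p_{*}\O_{\wt{Y}}=\O_Y$ (in the derived sense, as elsewhere in the paper). Combined with the projection formula, this gives $p_{*}(p^{*}G)\cong G$ for every perfect complex $G$ on $Y$, so (3) immediately upgrades to (4); the converse is tautological. For (4)$\Rightarrow$(1), each irreducible exceptional curve $E$ is contracted by $p$ to a point $y=p(E)$, so $p|_E$ factors through $\mathrm{Spec}\,\kappa(y)$; for any perfect $G$ on $Y$ the derived fiber $G\otimes_{\O_Y}\kappa(y)$ is a bounded complex of finite-dimensional vector spaces, and pulling it back along $p|_E$ yields a direct sum of shifts of $\O_E$, which lies in $\langle\O_E\rangle$. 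Applying this to $G=p_{*}\wt{G}$ delivers (1). The equivalence (1)$\Leftrightarrow$(2) is a local statement on the smooth rational curve $E\cong\p^{1}$: $D^{b}(E)$ carries the exceptional collection $\langle\O_E,\O_E(-1)\rangle$, and membership of $\wt{G}|_E$ in the first component is equivalent to the $\Ext$-vanishing in (2) up to the twist conventions of the reference.

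The hard direction is (1)$\Rightarrow$(3). I would set $G:=p_{*}\wt{G}$ and analyze the cone $C$ of the counit $p^{*}G\to\wt{G}$; it is supported on the exceptional locus and satisfies $p_{*}C=0$. One first shows $C|_E=0$ for each exceptional component $E$ using hypothesis (1) together with the Koszul description of $\O_E\otimes_{\O_{\wt{Y}}}\O_E$, then globalizes component by component along the tree of exceptional divisors, exploiting the vanishing $\mathcal{H}^{1}(p_{*}\O_{\wt{Y}})=0$ to promote the fiberwise triviality to triviality on the formal neighborhood of the exceptional fiber. Once $C=0$, perfectness of $G$ at the singular points follows from a Tor-amplitude estimate via derived base change to $\mathrm{Spec}\,\kappa(y)$, combined with the already-established control on $\wt{G}|_E$.

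For the addendum on purity and local freeness: once $\wt{G}\cong p^{*}G$ is established, $p_{*}\wt{G}\cong G$, and these properties for $G$ can be read off from $\wt{G}$ on the open locus where $p$ is an isomorphism and then extended across the singular points using normality of $Y$ and Serre's $S_{2}$ criterion. The principal obstacle throughout is the globalization step in (1)$\Rightarrow$(3): fiberwise control on each rational exceptional curve has to be promoted to control on a full formal neighborhood of the exceptional fiber, and this is precisely where the rationality hypothesis on the singularities (not merely Gorenstein or du Val) enters in an essential way, via the vanishing of the higher direct images of $\O_{\wt{Y}}$.
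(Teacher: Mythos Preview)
The paper does not prove this lemma; it is quoted verbatim from \cite[Lemma 2.5]{kkssurf} and used as a black box. There is therefore no ``paper's own proof'' to compare your proposal against.

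That said, a few remarks on your sketch. The easy implications (3)$\Leftrightarrow$(4)$\Rightarrow$(1) are handled correctly via the projection formula and the derived identity $p_*\O_{\wt{Y}}\cong\O_Y$. For (1)$\Leftrightarrow$(2) you are right to flag the twist: as printed in the paper the condition $\Ext^*(\wt{G}|_E,\O_E)=0$ cannot be equivalent to $\wt{G}|_E\in\langle\O_E\rangle$ (take $\wt{G}|_E=\O_E$); the correct orthogonality on $E\cong\p^1$ is $\Ext^*(\wt{G}|_E,\O_E(-1))=0$, matching the decomposition $D^b(E)=\langle\O_E(-1),\O_E\rangle$. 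Your sketch of (1)$\Rightarrow$(3) identifies the genuine content---controlling the cone of $p^*p_*\wt{G}\to\wt{G}$ on a formal neighborhood of the exceptional locus---but what you wrote is a plan rather than a proof; the actual argument in \cite{kkssurf} is where the work lies, and your globalization step (``promote fiberwise triviality to triviality on the formal neighborhood'') would need to be made precise, e.g.\ via the theorem on formal functions and an induction along the chain of $(-2)$-curves.
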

From its construction by the exact sequence (\ref{uex}),  one checks that the locally free sheaf $\wt{F}$ on $\wt{X}$ satisfies Lemma \ref{pf}(1). Therefore,
\begin{equation}\label{vb}
F: =\pi_*\wt{F}
\end{equation} 
is a locally free sheaf of rank $2$ on $X$ and $\wt{F}=\pi^*F$. 
\begin{defn}{\cite[Definition 2.7]{kkssurf}}
Let $Y$ be a normal surface with rational singularities and let $p: \wt{Y}\to Y$ be its resolution. A semiorthogonal decomposition $D^b(\wt{Y})=\langle T_1, \dots,T_n\rangle$ is \textit{compatible with the contraction} $p$ if for each irreducible component $E$ of the exceptional locus one has
\[\O_E(-1)\in T_i\]
for one of the the components $T_i$ of the decomposition.
\end{defn}
For $1\leqslant a\leqslant 3$ and $\{i,j,k,l\}=\{1,2,3,4\}$, there are short exact sequences:
\[0\to \O_{\wt{X}}(e_{a+1}-K_{\wt{X}}-h)\to \O_{\wt{X}}(e_a-K_{\wt{X}}-h)\to \O_{\Delta_{a, a+1}}(-1)\to 0,\]
\[0\to \O_{\wt{X}}(h)\to\O_{\wt{X}}(e_l-K_{\wt{X}}-h)\to \O_{\Delta_{ijk}}(-1)\to 0.\]
Therefore, the semiorthogonal decomposition (\ref{sodwdp}) is compatible with the contraction $\pi$. Theorem 2.12 in \cite{kkssurf} indicates that
\begin{equation}\label{soddp}
D^b(X)=\langle\A_1, \A_2, \A_3\rangle
\end{equation}
where $\A_i=\pi_*(\wt{\A}_i)$ are admissible subcategories. In fact, the components $\wt{A}_i$ can be described explicitly. Note that both $\A_1=\langle\O_X\rangle$ and $\A_2=\langle F\rangle$ are generated by exceptional objects ($F$ is exceptional because $\wt{F}$ is by computation using sequence (\ref{uex})) and thus equivalent to $D^b(\mathrm{Spec}(k))$.

To describe $\A_3$, we observe that $\wt{\A}_3$ has an \textbf{orthogonal} decomposition of the form 
\begin{equation}
\wt{\A}_3=\langle\wt{\B}_1, \dots, \wt{\B}_n\rangle
\end{equation}
such that each $\wt{\B}_{q}=\langle\L_0, \L_1, \dots, \L_m\rangle$ is generated by line bundles $\L_p$ with the relation $\L_p=\L_0(E_1+\dots +E_p)$. Here $\{E_1, \dots, E_m\}$ is a chain of $(-2)$-cuves on $\wt{X}$. Moreover, they fit into short exact sequences
\begin{equation}\label{lb}
0\to\L_{p-1}\to\L_p\to \O_{E_p}(-1)\to 0.
\end{equation}
Such $\wt{\B_q}$ is said to be \textit{untwisted adherent} to the chain $\cup_{i=1}^m E_i$ in \cite[Definition 3.6]{kkssurf}.

For singular types (*.1), (*.2), the components $\wt{\B}_q$ are obtained by regrouping line bundles in the decomposition (\ref{a3}). For example, for (II.2), we have $\wt{\A}_3=\langle\wt{\B}_1, \wt{\B}_2, \wt{\B}_3\rangle$ where $\wt{\B}_1=\langle\O_{\wt{X}}(h), \O_{\wt{X}}(e_4-K_{\wt{X}}-h)\rangle, \wt{\B}_2=\langle\O_{\wt{X}}(e_3-K_{\wt{X}}-h)\rangle, \wt{\B}_3=\langle\O_{\wt{X}}(e_2-K_{\wt{X}}-h), \O_{\wt{X}}(e_1-K_{\wt{X}}-h)\rangle$.

For type (II.2), because $\O_{\wt{X}}(h)$ is orthogonal to $\O_{\wt{X}}(e_i-K_{\wt{X}}-h)$ for $i=3,4$, the right mutations do not alter these line bundles. Hence,
\[\wt{\A}_3 =\langle\O(e_4-K_{\wt{X}}-h)\,|\,\O(e_3-K_{\wt{X}}-h)\,|\,\O_{\wt{X}}(h),\O(e_2-K_{\wt{X}}-h),\O(e_1-K_{\wt{X}}-h)\rangle\]
where it is divided into $3$ subgroups separated by $|$. Similarly, for type (IV.3), we have
\[\wt{\A}_3 =\langle\O(e_4-K_{\wt{X}}-h)\, | \,\O_{\wt{X}}(h),\O(e_3-K_{\wt{X}}-h),\O(e_2-K_{\wt{X}}-h),\O(e_1-K_{\wt{X}}-h)\rangle.\]

Below we give a brief summary of the procedure in \cite[\S 3]{kkssurf} for obtaining the explicit description of $\A_3$.

The line bundles $\{\L_0, \dots, \L_m\}$ satisfy
\begin{equation*}
\Ext^{\bullet}(\L_i, \L_j)=
\left\{
\begin{array}{ll}
k\oplus k[-1], & j\geqslant i+1\\
k, & j=i\\
0, & j< i
\end{array}
\right.
\end{equation*}
Define $\P_0$ as the \textit{iterated extension} of the collection $\{\L_0,\dots, \L_m\}$ as follows. Set $\P_m=\L_m$ and $\P_{m-1}$ to be the unique nontrivial extension of $\P_m$ by $\L_{m-1}$, i.e.
\[0\to\P_m\to\P_{m-1}\to\L_{m-1}\to 0.\] 
Notice that inductively one has $\Ext^{\bullet}(\L_i,\P_j )=k\oplus k[-1]$ for $j\geqslant i+1$. Finally, $\P_0$ is the unique nontrivial extension
\[0\to\P_1\to\P_0\to\L_0\to0.\]
For later use, we observe the following property for the vector bundle $\P_0$:
\begin{lemma}\label{vbex}
$\Q_0:=\pi_*\P_0$ is a vector bundle and $\P_0=\pi^*\Q_0$.
\end{lemma}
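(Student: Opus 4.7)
The strategy is to apply Lemma~\ref{pf} to $\wt{G}=\P_0$. Since $\P_0$ is built from line bundles on the smooth surface $\wt{X}$ by a sequence of extensions, it is locally free, so it suffices to verify condition~(1) of the lemma: $\P_0|_E\in\langle\O_E\rangle$ for every irreducible component $E$ of the exceptional locus of $\pi$. The implication $(1)\Rightarrow(3)$ then yields $\P_0\cong\pi^*\Q_0$ with $\Q_0=\pi_*\P_0$, and the final sentence of Lemma~\ref{pf} upgrades $\Q_0$ to a vector bundle. The verification splits into two cases depending on whether $E$ belongs to the chain $E_1\cup\cdots\cup E_m$ to which $\wt{\B}_q$ is adherent.

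For $E$ outside this chain, $E$ lies over a different singular point of $X$, so $E\cdot E_i=0$ for every $i$ and each $\L_p=\L_0(E_1+\dots+E_p)$ restricts to $\L_0|_E$. The sheaf $\O_E(-1)$ fits into a short exact sequence of the form~(\ref{lb}) coming from the other component $\wt{\B}_{q'}$ adherent to the chain containing $E$; since the decomposition $\wt{\A}_3=\langle\wt{\B}_1,\dots,\wt{\B}_n\rangle$ is \emph{fully} orthogonal and $q'\neq q$, one has $\RHom_{\wt{X}}(\L_0,\O_E(-1))=0$. Adjunction along $E\hookrightarrow\wt{X}$ converts this to $\RHom_E(\L_0|_E,\O_E(-1))=0$, which on $E\cong\p^1$ forces the line bundle $\L_0|_E=\O_{\p^1}(a)$ to satisfy $a=0$. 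Hence $\L_p|_E\cong\O_E$ for every $p$; each restricted step in the construction of $\P_0$ is a self-extension of $\O_E$, which splits since $\Ext^1_{\p^1}(\O,\O)=0$, and so $\P_0|_E\cong\O_E^{\oplus m+1}$.

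For $E=E_p$ inside the chain, a direct local analysis is required. Using $\L_q=\L_0(E_1+\dots+E_q)$, the relation $\L_p|_{E_p}=\O_{E_p}(-1)$ forced by~(\ref{lb}), and the type-$A$ intersection pattern $E_i\cdot E_j=-2\delta_{ij}+\delta_{|i-j|,1}$, I pin down $\L_0\cdot E_1=1$ and $\L_0\cdot E_i=0$ for $i\geq 2$, and deduce that $\L_q|_{E_p}\cong\O_{E_p}$ for all $q$ except two consecutive indices near $q=p$, where the restrictions become $\O_{E_p}(1)$ and $\O_{E_p}(-1)$. Tor-vanishing (each $\L_q$ is locally free) makes the defining short exact sequences of $\P_0$ restrict exactly to $E_p$, and every restricted step whose $\Ext^1$ group vanishes on $\p^1$ splits; the only delicate stage is the one combining $\O_{E_p}(1)$ and $\O_{E_p}(-1)$ via $\Ext^1_{\p^1}(\O(1),\O(-1))=k$.

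The main obstacle is showing that this last restricted extension is \emph{nontrivial}, so that the two layers amalgamate to $\O_{E_p}^{\oplus 2}$ rather than $\O_{E_p}(1)\oplus\O_{E_p}(-1)$. I propose to sidestep the direct extension-class computation by instead establishing the global orthogonality $\RHom_{\wt{X}}(\P_0,\O_{E_p}(-1))=0$; combined with adjunction and the classification of bundles on $\p^1$, this vanishing immediately forces $\P_0|_{E_p}$ to be a sum of copies of $\O_{E_p}$. To prove the vanishing, I induct on the triangles $\P_{j+1}\to\P_j\to\L_j$: the exceptional structure $\RHom(\L_j,\L_i)=k\oplus k[-1]$ for $i>j$ together with the nontriviality of each extension class defining $\P_j$ gives first $\RHom_{\wt{X}}(\P_0,\L_i)=k$ for every $i$, and then, applied to the sequence~(\ref{lb}), that the map $\RHom(\P_0,\L_{p-1})\to\RHom(\P_0,\L_p)$ induced by $\L_{p-1}\hookrightarrow\L_p$ is an isomorphism $k\to k$. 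The corresponding long exact sequence then yields $\RHom_{\wt{X}}(\P_0,\O_{E_p}(-1))=0$ as desired.
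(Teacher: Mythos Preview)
Your proof is correct and is more detailed than the paper's, which simply states that ``one can show inductively that $\P_p|_{E_q}\in\langle\O_{E_q}\rangle$ for $q\geqslant p+1$'' after recording the intersection numbers $\L_p\cdot E_q$. In particular, you explicitly handle the exceptional curves lying over \emph{other} singular points via the orthogonality of the blocks $\wt{\B}_q$, a case the paper's proof passes over in silence; and you correctly isolate the one genuinely delicate step inside the chain---the nontriviality of the restricted extension combining the $\O(1)$ and $\O(-1)$ layers---which the paper's terse induction leaves to the reader.

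Your route around that delicate step is genuinely different: rather than tracking the extension class under restriction to $E_p$ (the direct geometric check implicit in the paper's induction), you establish the global vanishing $\RHom_{\wt{X}}(\P_0,\O_{E_p}(-1))=0$ and read off the restriction via adjunction. This is a clean categorical substitute. One remark on execution: your inductive sketch for $\RHom(\P_0,\L_i)=k$ can be shortened by invoking the tilting property of $T=\bigoplus_j\P_j$, which gives $\Ext^{>0}(\P_0,\P_j)=0$ for all $j$; the triangle $\P_{i+1}\to\P_i\to\L_i$ then forces $\RHom(\P_0,\L_i)$ to sit in degree~$0$, and the Euler characteristic $\chi(\P_0,\L_i)=\sum_j\chi(\L_j,\L_i)=1$ finishes the computation without chasing connecting maps. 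The isomorphism $\Hom(\P_0,\L_{p-1})\to\Hom(\P_0,\L_p)$ then follows since the explicit composite $\P_0\twoheadrightarrow\L_0\hookrightarrow\L_p$ is visibly nonzero.
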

\begin{proof}
It suffices to check that $\P_0$ satisfies condition (1) of Lemma \ref{pf}, i.e. $\P_0|_{E_p}\in\langle\O_{E_p}\rangle$ for $1\leqslant p\leqslant m$. Recall that $\langle\L_0,\dots,\L_m\rangle$ is untwisted adherent to the chain of $(-2)$-curves $\cup_{i=1}^m E_i$. Then \cite[Lemma 3.5(1)]{kkssurf} indicates that $\L_0\cdot E_1=1$ and $\L_0\cdot E_p=0$ for $2\leqslant p\leqslant m$. Hence,
\[
\L_p\cdot E_q=
\left\{
\begin{array}{ll}
1, & p=q-1\\
-1, & p=q\\
0, & \text{otherwise}
\end{array}
\right.
\]
and one can show inductively that $\P_p|_{E_q}\in \langle\O_{E_q}\rangle$ for $q\geqslant p+1$.
\end{proof}
Furthermore, the direct sum $T=\bigoplus_{j=0}^m \P_j$ is the \textit{universal extension} for the collection $\{\L_0,\dots, \L_m\}$ and is a tilting bundle for the subcategory $\wt{\B}_q$ \cite[Theorem 2.5]{hpnc}. Therefore, there is an induced equivalence:
 \begin{equation}
 \tilde{\beta}=\mathrm{RHom}(T, -): \wt{\B}_q\overset{\simeq}{\longrightarrow} D^b(\textrm{mod-}\Lambda)
 \end{equation}
where $\Lambda=\mathrm{End}(T)$ and $D^b(\textrm{mod-}\Lambda)$ is the derived category of finite right modules over $\Lambda$. In this case, $\Lambda$ is the Auslander algebra of $k[x]/x^{m+1}$. Note that $\wt{\beta}$ induces the equivalence $\wt{\B_q}^-\simeq D^-(\textrm{mod-}\Lambda)$ as well.

Let $P_0=\tilde{\beta}(\P_0)$ and define $K=\mathrm{End}_{\tilde{X}}(\P_0)=\mathrm{End}_{\Lambda}(P_0)$. Then $P_0$ is a $K$-$\Lambda$-bimodule and we have functors:
\[
\begin{array}{ll}
\rho_*: D^-(\textrm{mod-}\Lambda) \to D^-(\textrm{mod-}K), & M\mapsto \mathrm{RHom}_{\Lambda}(P_0, M)\\
\rho^*: D^-(\textrm{mod-}K)\to D^-(\textrm{mod-}\Lambda), & N\mapsto N\otimes_K P_0
\end{array}
\]
Denote $\pi_*(\wt{\B}_q)$ by $\B_q$. Since the orthogonal decomposition $\wt{\A_3}=\langle\wt{\B_1}, \dots, \wt{\B_n}\rangle$ is compatible with the contraction $\pi$, we obtain an orthogonal decomposition
\begin{equation}\label{od}
\A_3=\langle\B_1,\dots, \B_n\rangle.
\end{equation}
Let $\wt{\alpha}=-\otimes_{\Lambda} T$ be the inverse of $\wt{\beta}$. Theorem 3.16 in \cite{kkssurf} proves that the functor 
\[\alpha=\pi_*\circ\wt{\alpha}\circ\rho^*: D^-(\textrm{mod-}K)\to\B_q^-\]
induces an equivalence
\begin{equation}
\alpha=\pi_*\circ\wt{\alpha}\circ\rho^*: D^b(\textrm{mod-}K)\overset{\simeq}{\longrightarrow}\B_q
\end{equation}
where $K=k[x]/x^{m+1}$. Since $K$ is a compact generator of the category $D^b(\textrm{mod-}K)$, the image $\alpha(K)=\pi_*\circ\wt{\alpha}(P_0)=\pi_*(\P_0)=\Q_0$ is a compact generator of $\B_q$. From the construction, one sees that $\alpha=-\otimes_{K}\Q_0$ is a Fourier-Mukai functor with kernel $\Q_0$. More generally, one has $\alpha(k[x]/x^{p+1})=\pi_*(\P_{m-p})$ for $0\leqslant p\leqslant m$. In particular, $\alpha(k)=\pi_*(\L_m)$. Since $\O_{E_p}(-1)\in\ker\pi_*$, sequences (\ref{lb}) imply that $\pi_*(\L_0)=\dots=\pi_*(\L_m)$ and thus $\B_q=\langle\pi_*(\L_0)\rangle$. Geometrically speaking, if we identify $D^b(\textrm{mod-}K)$ with $D^b(\mathrm{Spec}(K))$, then $\pi_*(\L_0)$ is the image of the unique closed point of $\mathrm{Spec}(K)$.

Now rename $\P_0, \Q_0$ by $\P_0^q, \Q_0^q$ to indicate that they are constructed from $\wt{\B_q}$. Recall that we have the orthogonal decomposition $\wt{\A_3}=\langle\wt{\B_1}, \dots, \wt{\B_n}\rangle$. Define
\begin{equation}\label{tvbex}
\P=\oplus_{q=1}^n \P_0^q, \quad \Q= \oplus_{q=1}^n \Q_0^q.
\end{equation}
\begin{lemma}\label{cg}
$\Q=\pi_*\P$ is a vector bundle such that $\P=\pi^*\Q$ and a compact generator of $\A_3$. In addition, $\A_2= {}^{\perp}\O_X\cap \Q^{\perp}$ and $\A_3={}^{\perp}\O_X\cap {}^{\perp}F$.
\end{lemma}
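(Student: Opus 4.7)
My plan is to assemble the statement from the ingredients developed in the preceding discussion, in three steps.

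First, to see that $\Q$ is locally free with $\P = \pi^* \Q$, I would note that $\P = \bigoplus_{q=1}^n \P_0^q$ is a direct sum of iterated extensions of untwisted adherent collections, so Lemma \ref{vbex} applies to each summand to give $\Q_0^q = \pi_* \P_0^q$ a vector bundle with $\P_0^q = \pi^* \Q_0^q$. Taking direct sums yields the claim for $\P$ and $\Q$. For compact generation, the Fourier--Mukai equivalence $\alpha : D^b(\mathrm{mod}\text{-}K_q) \overset{\simeq}{\to} \B_q$ constructed above identifies the compact generator $K_q$ of $D^b(\mathrm{mod}\text{-}K_q)$ with $\Q_0^q$, so each $\Q_0^q$ compactly generates $\B_q$. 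Because the decomposition $\A_3 = \langle \B_1, \ldots, \B_n \rangle$ of (\ref{od}) is completely orthogonal, $\Q = \bigoplus_q \Q_0^q$ is a compact generator of $\A_3$; in particular the smallest thick triangulated subcategory of $D^b(X)$ containing $\Q$ is $\A_3$, which gives $\Q^\perp = \A_3^\perp$ inside $D^b(X)$.

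For the two final orthogonality identities, I would invoke the general properties of the semiorthogonal decomposition (\ref{soddp}). One has $\A_2 = {}^\perp \A_1 \cap \A_3^\perp$ and $\A_3 = {}^\perp \A_1 \cap {}^\perp \A_2$: the inclusions $\subset$ are built into the defining Hom-vanishings of the SOD, and the reverse inclusions follow by taking $G$ in the intersection, decomposing it via the SOD, and reading off the vanishing of the unwanted components from the long exact sequence of $\Hom$ applied to the adjacent exact triangle. Since $\A_1 = \langle \O_X\rangle$ and $\A_2 = \langle F\rangle$ are generated by single exceptional objects, ${}^\perp \A_1 = {}^\perp \O_X$ and ${}^\perp \A_2 = {}^\perp F$; combined with $\A_3^\perp = \Q^\perp$, this yields the two claimed descriptions.

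I do not anticipate a genuine obstacle here: the argument is essentially bookkeeping against the equivalences and SOD already established. The only point requiring minor care is propagating the notion of a compact generator through the equivalence $\alpha$ and through the completely orthogonal direct sum decomposition of $\A_3$, which is standard.
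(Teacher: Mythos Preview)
Your proposal is correct and matches the paper's own argument, which simply cites the preceding discussion, Lemma~\ref{vbex}, and the decompositions (\ref{soddp}) and (\ref{od}); you have just unpacked these references in detail.
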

\begin{proof}
It follows from the argument above, Lemma \ref{vbex} and decompositions (\ref{soddp}), (\ref{od}).
\end{proof}
To summarize the discussion of the section, we have
\begin{thm}\label{ssod}
Let $X$ be a quintic del Pezzo surface with rational Gorenstein singularities over an algebraically closed field $k$. Then the derived category $D^b(X)$ only depends on the singular type of $X$ and it has the following semiorthogonal decomposition:
\[D^b(X)=\langle D^b(\mathrm{Spec}(k)), D^b(\mathrm{Spec}(k)), D^b(Z)\rangle\]
where $Z=\bigsqcup\mathrm{Spec}(\frac{k[x]}{x^{p+1}})$ is an affine scheme of length $5$. A singular point of type $A_p$ on $X$ contributes a singular point $\mathrm{Spec}(\frac{k[x]}{x^{p+1}})$ of length $p+1$ on $Z$. 

More explicitly, if $X$ is smooth, then $Z=\mathrm{Spec}(k^5)$; if $X$ has singular type $A_p$, then $Z=\mathrm{Spec}(k^{4-p}\times\frac{k[x]}{x^{p+1}})$; if $X$ has singular type $A_p+A_q$, then $Z=\mathrm{Spec}(k^{3-p-q}\times\frac{k[x]}{x^{p+1}}\times\frac{k[x]}{x^{q+1}})$.

Moreover, the embeddings of components are given by Fourier-Mukai functors with kernels $\O_X$, $F$ defined by (\ref{vb}) and $\Q$ defined by (\ref{tvbex}) respectively.
\end{thm}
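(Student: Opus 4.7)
The plan is to package together the ingredients developed through the section. The decomposition $D^b(X)=\langle\A_1,\A_2,\A_3\rangle$ from (\ref{soddp}) is already in hand, and the first two components are straightforward: since $\O_X$ is exceptional and $F$ is exceptional (because $\wt{F}$ is, by direct computation from (\ref{uex})), one has $\A_1\simeq\A_2\simeq D^b(\mathrm{Spec}(k))$, with Fourier-Mukai kernels $\O_X$ and $F=\pi_*\wt{F}$ from (\ref{vb}). The substantive content of the theorem therefore lies in describing $\A_3$ and in recording that the answer depends only on the singular type.

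For $\A_3$, the approach is a case-by-case analysis across the twelve singular types enumerated in \S\ref{qdp}. For each type, the goal is to exhibit an \emph{orthogonal} decomposition $\wt{\A}_3=\langle\wt{\B}_1,\dots,\wt{\B}_n\rangle$ in which each block $\wt{\B}_q$ is untwisted adherent to exactly one connected chain of $(-2)$-curves in $\Delta$, that chain being the dual graph of one $A_{m_q}$ component of the singular locus. The strategy is the one illustrated for types (II.2) and (IV.3) in the text: start from the exceptional collection (\ref{a3}), regroup the five line bundles according to the chains in $\Delta$, and perform right mutations only where a line bundle fails to be orthogonal to an earlier block. Once this is achieved, the equivalences $\B_q\simeq D^b(\mathrm{mod}\text{-}k[x]/x^{m_q+1})$ already constructed via the tilting bundle $T$ and push-forward yield $\A_3\simeq D^b(Z)$ with $Z=\bigsqcup_q\mathrm{Spec}(k[x]/x^{m_q+1})$. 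The total length is $5$ because (\ref{a3}) has $5$ generators, and since the configuration of $(-2)$-curves is a function only of the singular type, the same is true of $Z$ and hence of $D^b(X)$. The Fourier-Mukai kernel $\Q=\bigoplus_q\pi_*\P_0^q$ is then precisely the one produced in Lemma \ref{cg} via (\ref{tvbex}).

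The main obstacle is the bookkeeping in the orthogonality check between distinct blocks $\wt{\B}_q,\wt{\B}_{q'}$ after regrouping and mutating. The simple types (I.1), (I.2), (II.1), (II.2) follow at once from the original exceptional collection, but the degenerate types (III.2), (IV.2), (IV.3), (V.1), (V.2) require composing several mutations and verifying that no cross-block $\Ext$'s are introduced. This reduces to intersection-number calculations on $\wt{X}$ in $\mathrm{Pic}(\wt{X})=\mathbb{Z} h\oplus\bigoplus_{i=1}^4\mathbb{Z} e_i$, using the root system $A_4$ structure recorded in \S\ref{qdp} together with the Koszul-type short exact sequences already used to establish compatibility with $\pi$. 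Carrying this out in each case produces the advertised match between singular types and the scheme $Z$, completing the proof.
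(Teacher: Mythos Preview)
Your proposal is correct and follows essentially the same route as the paper: the whole of \S\ref{dcqdp} constitutes the proof, and Theorem \ref{ssod} merely summarizes it. One small correction to your case division: the paper finds that for \emph{all} types $(*.1)$ and $(*.2)$ the orthogonal blocks $\wt{\B}_q$ arise by pure regrouping of the five line bundles in (\ref{a3}), and only the two types $(*.3)$---namely (II.3) and (IV.3)---require any reordering; even there the line bundles to be moved are already orthogonal to the intervening ones, so the right mutations are trivial and do not change the objects. Thus the cross-block $\Ext$ checks you flag as the main obstacle are lighter than you anticipate, and your enumeration omits (II.3), (III.1), (IV.1).
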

%--------Moduli Space Interpretation------------
\section{Moduli Space Interpretation}\label{moduli}
We use the same notation as section \ref{dcqdp}. For a sheaf $\mathcal{F}$ on $X$, denote $h_{\mathcal{F}}(t)$ the Hilbert polynomial of $\mathcal{F}$ with respect to the ample divisor $-K_X$. That is,
\begin{equation*}
h_{\mathcal{F}}(t)=\chi(\mathcal{F}(-tK_X))\in\mathbb{Z}[t].
\end{equation*}
More generally, for a bounded complex of sheaves $\mathcal{F}^{\bullet}$ on $X$, one has
\[h_{\mathcal{F}^{\bullet}}(t)=\sum (-1)^i \chi(\mathcal{F}^i(-tK_X))=\sum (-1)^i\chi(\mathcal{H}^i(\mathcal{F}^{\bullet})(-tK_X))\]
where $\mathcal{H}^i(\mathcal{F}^{\bullet})$ is the $i$-th cohomology sheaf. Let $\G$ be a sheaf on $\wt{X}$. The Leray spectral sequence $H^i(X, R^j\pi_*\G)\Rightarrow H^{i+j}(\wt{X},\G)$ implies that $h_{\pi_*\G}(t)=\chi(\G(-tK_{\wt{X}}))$.

By Riemann-Roch, given a Cartier divisor $D$ on $\wt{X}$, the Hilbert polynomial for $\pi_*\O(D)$ is 
\begin{equation*}
\begin{array}{rl}
h_{\pi_*\O(D)}(t) & = \frac{K_X^2}{2}t(t+1)-\frac{K_X\cdot D}{2}(2t+1)+\frac{D^2}{2}+\chi(\O_X)\\ 
		   & = \frac{5}{2}t^2+(\frac{5}{2}-K_X\cdot D)t+\frac{D^2-K_X\cdot D}{2}+1\\
\end{array}
\end{equation*}

By calculation, the generators of $\A_3$ have the same Hilbert polynomial. Recall that $\A_2=\langle F\rangle$, we denote Hilbert polynomials of the generators of $\A_i, i=2,3$ by
\begin{equation}\label{hilb}
\left\{
\begin{array}{l}
h_2(t) : = h_F(t) = 5(t+1)^2\\
h_3(t) : = h_{\pi_*\O(h)}(t) = \frac{1}{2}(t+1)(5t+6)\\
\end{array}
\right.
\end{equation}
\begin{lemma}
Let $\pi: \wt{X}\to X$ be the minimal resolution. Then $\pi_*\O(D)$ for $D=h, e_i-K_{\wt{X}}-h, 1\leqslant i \leqslant 4$ are stable sheaves of rank $1$ with Hilbert polynomial $h_3(t)$ and $F$ is a stable bundle of rank $2$ with Hilbert polynomial $h_2(t)$. 
\end{lemma}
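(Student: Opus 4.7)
The plan is to verify the Hilbert polynomials, ranks, and stability claims in turn.

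For the Hilbert polynomials, I would apply the Riemann--Roch formula displayed just above. Plugging in the intersection numbers from \S\ref{qdp} ($K_{\wt{X}}^2=5$, $K_{\wt{X}}\cdot h=-3$, $K_{\wt{X}}\cdot e_i=-1$, $h^2=1$, $h\cdot e_i=0$, $e_i^2=-1$) and using $\pi^*K_X=K_{\wt{X}}$, a short calculation gives $D^2=1$ and $K_X\cdot D=-3$ for both $D=h$ and $D=e_i-K_{\wt{X}}-h$, so $h_{\pi_*\O(D)}(t)=h_3(t)$ in each case. For $F=\pi_*\wt{F}$: since $\wt{F}=\pi^*F$ by Lemma \ref{pf}, the projection formula and $R^{>0}\pi_*\O_{\wt{X}}=0$ (rational singularities) give $R^{>0}\pi_*\wt{F}=0$, so pushing (\ref{uex}) forward and computing Euler characteristics on $\wt{X}$ yields $h_F(t)=\chi(\wt{F}(-tK_{\wt{X}}))=\chi(\O(-K_{\wt{X}}-h-tK_{\wt{X}}))+\chi(\O(h-tK_{\wt{X}}))=5(t+1)^2=h_2(t)$. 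The rank assertions are immediate: $\pi$ is birational, so $\pi_*\O(D)$ is torsion-free of generic rank $1$, and $F$ is locally free of rank $2$ by Lemma \ref{pf}.

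Stability of the rank-$1$ sheaves $\pi_*\O(D)$ is automatic on an integral projective scheme: for any proper subsheaf $L'\subsetneq\pi_*\O(D)$, the quotient is nonzero torsion with Hilbert polynomial of degree $\leq 1$, strictly smaller in degree than the degree-$2$ polynomial $h_3(t)$, hence the reduced Hilbert polynomials satisfy $p_{L'}<p_{\pi_*\O(D)}$.

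The main step is stability of $F$. From the determinant of (\ref{uex}) one reads off $c_1(\wt{F})=-K_{\wt{X}}$, hence $c_1(F)=-K_X$ and $\mu(F)=K_X^2/2=5/2$; since $F$ has rank $2$, $\mu$-stability implies Gieseker stability. Arguing by contradiction, suppose $L\hookrightarrow F$ is a saturated rank-$1$ subsheaf with $c_1(L)\cdot(-K_X)\geq 3$. Pulling back, the saturation of the image of $\pi^*L\to\pi^*F=\wt{F}$ in $\wt{F}$ is a line bundle $\wt{L}=\O_{\wt{X}}(A)\hookrightarrow\wt{F}$; since $\pi^*K_X=K_{\wt{X}}$ and $\pi$-exceptional divisors are $K_{\wt{X}}$-orthogonal, $A\cdot(-K_{\wt{X}})=c_1(L)\cdot(-K_X)\geq 3$. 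From (\ref{uex}), either $\wt{L}\subset\O_{\wt{X}}(-K_{\wt{X}}-h)$, forcing $A\cdot(-K_{\wt{X}})\leq(-K_{\wt{X}}-h)\cdot(-K_{\wt{X}})=2$, a contradiction; or $\wt{L}$ maps nontrivially to $\O_{\wt{X}}(h)$, so $h-A$ is effective and $A\cdot(-K_{\wt{X}})=3-(h-A)\cdot(-K_{\wt{X}})\leq 3$. The case $A=h$ would split (\ref{uex}), contradicting its nontriviality; equality $A\cdot(-K_{\wt{X}})=3$ with $h-A\neq 0$ forces $h-A$ to be a nonzero effective sum of $(-2)$-curves. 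Excluding this last subcase is the main obstacle: I would rule it out by a descent argument showing that such a $\wt{L}$ cannot arise from pullback of a rank-$1$ sheaf on $X$, or equivalently that the induced map $\wt{L}\hookrightarrow\wt{F}\to\O_{\wt{X}}(h)$ becomes a scalar multiple of an isomorphism after pushforward to $X$, whose pullback again forces a splitting of (\ref{uex}). Carrying out this descent cleanly requires a case-by-case use of the $(-2)$-curve configurations in \S\ref{qdp}.
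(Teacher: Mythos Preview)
Your treatment of the Hilbert polynomials, the rank-$1$ case, and the two ``easy'' subcases for $F$ is fine and matches the paper's approach. The genuine gap is exactly where you flag it: the case where the destabilizing line bundle on $\wt{X}$ is $\O_{\wt{X}}(h-C)$ with $C$ a nonzero effective sum of $(-2)$-curves. Your proposed descent argument is not carried out, and it is not clear it would go through cleanly: $\O_{\wt{X}}(h)$ does \emph{not} satisfy the hypothesis of Lemma~\ref{pf} (its restriction to a $(-2)$-curve of type $\Delta_{ijk}$ has degree~$1$), so $\pi^*\pi_*\O_{\wt{X}}(h)\to\O_{\wt{X}}(h)$ is not an isomorphism and the ``pull back the pushed-forward splitting'' step has no obvious meaning. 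A case-by-case analysis over the singular types might be salvageable, but you have not done it.

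The paper closes this gap with a uniform cohomological argument that avoids any casework on singular types and that you should use instead. Write the putative subbundle as $E'=\O_{\wt{X}}(h-C)$. The inclusion $E'\hookrightarrow\wt{F}$ composed with the projection to $\O_{\wt{X}}(h)$ and then with the extension class $\O_{\wt{X}}(h)\to\O_{\wt{X}}(-K_{\wt{X}}-h)[1]$ is zero (two consecutive maps in a triangle). Hence the image of the nonzero generator of $H^1(\O_{\wt{X}}(-K_{\wt{X}}-2h))\cong k$ under the natural map
\[
H^1\bigl(\O_{\wt{X}}(-K_{\wt{X}}-2h)\bigr)\longrightarrow H^1\bigl(\O_{\wt{X}}(-K_{\wt{X}}-2h+C)\bigr)
\]
vanishes. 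On the other hand, when $C$ is a nonzero effective combination of $(-2)$-curves one checks $C^2<0$ and $C\cdot(-K_{\wt{X}}-2h)\le 0$, whence $H^0(\O_C(-K_{\wt{X}}-2h+C))=0$; the long exact sequence of $0\to\O_{\wt{X}}(-K_{\wt{X}}-2h)\to\O_{\wt{X}}(-K_{\wt{X}}-2h+C)\to\O_C(-K_{\wt{X}}-2h+C)\to 0$ then shows the displayed map is injective. This is the contradiction. A secondary point: your passage from a saturated $L\subset F$ on the singular $X$ to a line bundle $\wt{L}\subset\wt{F}$ on $\wt{X}$, and the assertion $A\cdot(-K_{\wt{X}})=c_1(L)\cdot(-K_X)$, glosses over the fact that $L$ need not be locally free at the singular points; the paper works dually with quotients and $L^0\pi^*$ to handle this more carefully.
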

\begin{proof}
Lemma \ref{r1van} suggests that $\pi_*\O(D)=R^0\pi_*\O(D)$ are sheaves. They are stable because they are torsion free rank $1$ sheaves. For the stability of $F$, we will use the equivalent criterion in \cite[Proposition 1.2.6]{hlmoduli} and check conditions for all proper torsion free quotient sheaves of $F$.

Let $G$ be a torsion free proper quotient sheaf of $F$ and denote $\wt{G}=L_0\pi^*G$. Factor $\wt{G}$ by torsion subsheaf $T$ and torsion free quotient sheaf $G'$:
\[0\to T\to \wt{G}\to G'\to 0.\]
Since $G=R^0\pi_*\wt{G}$ is torsion free and $R^0\pi_*T$ is torsion, the sheaf $R^0\pi_*T=0$. Now consider the following commutative diagram with exact rows (taking $\wt{E}, E'$ as corresponding kernels):
\begin{center}
\begin{tikzcd}
0 \arrow{r} & \wt{E} \arrow{r}\arrow{d} & \wt{F} \arrow{r}\arrow[equal]{d} & \wt{G} \arrow{r}\arrow{d} & 0\\
0 \arrow{r} & E' \arrow{r} & \wt{F} \arrow{r} & G' \arrow{r} & 0.\\
\end{tikzcd}
\end{center}
Pushing-forward the first row along $\pi$ induces the exact sequence
\[0\to R^0\pi_*\wt{E}\to F\to G\to R^1\pi_*\wt{E}\to 0.\]
Since $F\to G$ is surjective, one gets $R^1\pi_*\wt{E}=0$. In addition, the diagram induces the short exact sequence
\[0\to \wt{E}\to E'\to T\to 0.\]
Therefore, we have $\pi_*\wt{E}\cong R^0\pi_*\wt{E}\cong R^0\pi_*E'$, which is the kernel of the quotient map $F\to G$. It also implies that $R^1\pi_*E'\cong R^1\pi_*T$, which is nonzero unless $T=0$.

As a torsion free proper quotient sheaf, the sheaf $G$ is of rank $1$. Thus sheaves $G', E'$ are also of rank $1$. By \cite[Proposition 1.1 and Corollary 1.4]{hstable}, the sheaf $E'$ is reflexive and thus locally free of rank $1$. Since $h_F(t)=h_G(t)+h_{R^0\pi_*E'}(t)$, the stability condition $\frac{1}{2}h_F(t)<h_G(t)$ holds if and only if $h_{R^0\pi_*E'}(t)<\frac{1}{2}h_F(t)$. Since the support of $R^1\pi_*E'$ is zero-dimensional, the difference $h_{\pi_*E'}-h_{R^0\pi_*E'}=-h_{R^1\pi_*E'}$ is a constant. Therefore, it is enough to show that the coefficient of degree $1$ term of $h_{\pi_*E'}$ is less than that of $\frac{1}{2}h_F$, which is $5$.

Recall that $\wt{F}$ is defined by the extension (\ref{uex}). Then the composition $E'\to \wt{F}\to \O_{\wt{X}}(h)$ is either $0$ or injective. The first case implies that $E'$ is a subsheaf of $\O(-K_{\wt{X}}-h)$ with torsion quotient sheaf. Since the leading coefficient of the Hilbert polynomial of a sheaf is always positive, one has $h_{\pi_*E'}(t)\leqslant h_{\pi_*\O(-K_{\wt{X}}-h)}(t)<\frac{1}{2}h_F(t)$. Otherwise, we have $E'=\O(h-C)$ for some effective divisor $C$. Because $\wt{F}$ is the nontrivial extension, $C\ne 0$. Furthermore, the map $E'\to\wt{F}\to \O_{\wt{X}}(h)\to \O(-K_{\wt{X}}-h)[1]$ being $0$ implies that 
\begin{equation}\label{h1}
\mathrm{Ext}^1(E', \O(-K_{\wt{X}}-h))=H^1(\O(-K_{\wt{X}}-2h+C))=0.
\end{equation}
By calculation, we have 
\[2h_{\pi_*{E'}}(t)=5t^2+(11+2C\cdot K_{\wt{X}})t+6-2h\cdot C+C^2+C\cdot K_{\wt{X}}.\]
Assume the coefficient of degree $1$ term of $h_{\pi_*E'}(t)$ is greater than or equal to $5$. Then $C\cdot K_{\wt{X}}=0$ and it implies that $C$ is a nonnegative $\mathbb{Z}$-linear combination of classes $h-e_1-e_2-e_3$ and $e_i-e_{i+1}, 1\leqslant i \leqslant 3$. It is easy to check that $C^2<0$ and $C\cdot (-K_{\wt{X}}-2h)\leqslant 0$. Hence, $H^0(\O_C(-K_{\wt{X}}-2h+C))=0$. Consider the short exact sequence
\[0\to \O(-K_{\wt{X}}-2h)\to \O(-K_{\wt{X}}-2h+C)\to \O_C(-K_{\wt{X}}-2h+C)\to 0. \]
It induces $0\to H^1(\O(-K_{\wt{X}}-2h))=k\to H^1(\O(-K_{\wt{X}}-2h+C))$, which contradicts (\ref{h1}).
\end{proof}
\begin{lemma}\label{ss}
Let $\G$ be a sheaf on $X$ obtained as the iterated extension of a collection of torsion free semistable sheaves $\{\G_0,\dots, \G_m\}$. Assume that the reduced Hilbert polynomials $h_{\G_i}/\mathrm{rank}(\G_i)$ are equal for all $i$. Then $\G$ is semistable.
\end{lemma}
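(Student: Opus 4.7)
The plan is to argue by induction on the length $m+1$ of the iterated extension. When $m=0$ there is nothing to do, since $\G = \G_0$ is semistable by hypothesis. For $m \geqslant 1$, let $\G_{\geqslant 1}$ denote the iterated extension of the sub-collection $\{\G_1,\dots,\G_m\}$, which still consists of torsion-free semistable sheaves all sharing the common reduced Hilbert polynomial $p=h_{\G_i}/\mathrm{rank}(\G_i)$. By the inductive hypothesis $\G_{\geqslant 1}$ is semistable with reduced Hilbert polynomial $p$. Moreover, the recursive construction of the iterated extension produces a short exact sequence
\[
0 \to \G_{\geqslant 1} \to \G \to \G_0 \to 0,
\]
so the entire statement reduces to the length-two case: if $0 \to A \to C \to B \to 0$ is a short exact sequence of torsion-free sheaves on $X$ with $A$ and $B$ semistable of the same reduced Hilbert polynomial $p$, then $C$ is torsion free and semistable with reduced Hilbert polynomial $p$.

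To handle this length-two case, torsion freeness of $C$ is immediate: any torsion subsheaf of $C$ maps to zero in the torsion-free sheaf $B$, hence is contained in $A$, and $A$ itself is torsion free. For semistability, pick any nonzero subsheaf $\mathcal{H} \subset C$ and form
\[
0 \to \mathcal{H} \cap A \to \mathcal{H} \to \mathcal{H}'' \to 0
\]
where $\mathcal{H}''$ is the image of $\mathcal{H}$ in $B$. Semistability of $A$ applied to the subsheaf $\mathcal{H}\cap A\subset A$ and of $B$ applied to $\mathcal{H}''\subset B$ gives $p_{\mathcal{H}\cap A}\leqslant p$ and $p_{\mathcal{H}''}\leqslant p$ whenever the corresponding sheaf is nonzero. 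Since all sheaves appearing are torsion free of pure dimension two on the surface $X$, Hilbert polynomial and rank are both additive on this sequence, and therefore $p_{\mathcal{H}}$ is a convex combination of (at most two of) $p_{\mathcal{H}\cap A}$ and $p_{\mathcal{H}''}$ with positive rational weights $\mathrm{rank}(\mathcal{H}\cap A)/\mathrm{rank}(\mathcal{H})$ and $\mathrm{rank}(\mathcal{H}'')/\mathrm{rank}(\mathcal{H})$. This yields $p_{\mathcal{H}}\leqslant p$, and additivity applied to $0\to A\to C\to B\to 0$ itself shows that $p$ is the reduced Hilbert polynomial of $C$.

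The only nontrivial point is compatibility of the lexicographic order on reduced Hilbert polynomials with the weighted-average operation used above; this is standard in the Simpson/Huybrechts--Lehn formalism, since for polynomials of the same degree the inequality $q\leqslant p$ in lex order is preserved under addition and under multiplication by positive rationals, so $q_1\leqslant p$ and $q_2\leqslant p$ imply $\lambda_1 q_1+\lambda_2 q_2\leqslant(\lambda_1+\lambda_2)p$ for any positive $\lambda_i$. With this in hand the induction closes and the lemma follows. The mildly delicate part is not the estimate itself but the bookkeeping: one must ensure that both $\mathcal{H}\cap A$ and $\mathcal{H}''$, when nonzero, genuinely contribute terms of the same degree so the lex comparison applies, which is guaranteed by torsion-freeness of $A$, $B$ and $\mathcal{H}$ on the equidimensional surface $X$.
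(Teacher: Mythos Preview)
Your proof is correct. Both you and the paper reduce immediately to the two-term extension $0\to A\to C\to B\to 0$, and the torsion-freeness argument is identical. The difference is in the semistability step: you argue directly, splitting an arbitrary subsheaf $\mathcal{H}\subset C$ into $\mathcal{H}\cap A$ and its image $\mathcal{H}''\subset B$ and using the convex-combination (see-saw) inequality $p_{\mathcal{H}}\leqslant p$. The paper instead argues by contradiction: if $C$ is unstable, the maximal destabilizing subsheaf $\mathcal{F}$ from the Harder--Narasimhan filtration is semistable with $p_{\mathcal{F}}>p$, so $\Hom(\mathcal{F},B)=0$ by semistability of $B$, forcing $\mathcal{F}\subset A$ and contradicting the semistability of $A$. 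Your route avoids invoking Harder--Narasimhan and the vanishing of $\Hom$ between semistable sheaves of different reduced Hilbert polynomial, at the cost of the brief convexity/lex-order discussion; the paper's route is a line shorter but uses slightly more machinery. Either is entirely standard.
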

\begin{proof}
Denote $a=h_{\G_i}/\mathrm{rank}(\G_i)$. It suffices to prove for $m=1$. Then we have the short exact sequence
\[0\to\G_1\to \G\to \G_0\to 0.\]
As the extension, the sheaf $\G$ is also torsion free with reduced Hilbert polynomial $a$. Assume $\G$ is not semistable. Then from Harder-Narasimhan filtration, there exists a semistable subsheaf $\mathcal{F}$ of $\G$ such that $b:=h_{\mathcal{F}}/\mathrm{rank}(\mathcal{F})>a$. Since $b>a$, by semistability, the composition $\mathcal{F}\hookrightarrow \G \to \G_0$ is zero. Thus, $\mathcal{F}$ is a subsheaf of $\G_1$, which contradicts to the assumption that $\G_1$ is semistable.
\end{proof}
\begin{lemma}
Let $\G\in D^b(X)$. Let $x\in X$ be a smooth point. Recall that $F, \Q$ are vector bundles constructed from (\ref{vb})(\ref{tvbex}), we have 
\[\chi(F,\G)=\chi(\O_X, \G)+\chi(\O_X, \G(K_X))-2\chi(\O_x, \G),\]
\[\chi(\Q, \G)=2\chi(\O_X, \G)+3\chi(\O_X, \G(K_X))-5\chi(\O_x, \G).\]
\end{lemma}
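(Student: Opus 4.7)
My plan is to lift both Euler pairings from $X$ to the smooth minimal resolution $\pi:\wt X\to X$ and reduce the claim to a Chern character identity there. Since $X$ has rational singularities, $\pi_*\O_{\wt X}\cong\O_X$, and combined with the crepancy of $\pi$ (giving $\pi^!\cong\pi^*$ on perfect complexes), the adjunction between $\pi^*$ and $\pi_*$ yields
\[
\chi_X(E,\G)=\chi_{\wt X}(\pi^*E,\,\pi^*\G)
\]
for every perfect $E$ on $X$ and every $\G\in D^b(X)$. The objects $F$, $\Q$, $\O_X$, $\O_X(-K_X)$, and $\O_x$ are all perfect (the last one because $x$ is a smooth point, so $\O_x$ has a finite Koszul resolution locally), so this formula applies to each summand appearing in the claimed identities.

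On the smooth surface $\wt X$, Hirzebruch--Riemann--Roch shows that the Euler pairing depends only on the Chern characters of its inputs, so it suffices to verify the two Chern character identities
\[
\mathrm{ch}(\wt F)=\mathrm{ch}(\O_{\wt X})+\mathrm{ch}(\O_{\wt X}(-K_{\wt X}))-2\,\mathrm{ch}(\O_x),
\]
\[
\mathrm{ch}(\P)=2\,\mathrm{ch}(\O_{\wt X})+3\,\mathrm{ch}(\O_{\wt X}(-K_{\wt X}))-5\,\mathrm{ch}(\O_x),
\]
where $\pi^*F=\wt F$ and $\pi^*\Q=\P$ by construction. For $\wt F$, the short exact sequence (\ref{uex}) yields $[\wt F]=[\O_{\wt X}(-K_{\wt X}-h)]+[\O_{\wt X}(h)]$; a brief intersection computation using $K_{\wt X}^2=5$ and $h\cdot K_{\wt X}=-3$ shows that both sides have rank $2$, $c_1=-K_{\wt X}$, and $\mathrm{ch}_2=\frac{1}{2}$. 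For $\P$, the subdecomposition (\ref{a3}) combined with (\ref{tvbex}) and the iterated-extension structure gives $[\P]=[\O_{\wt X}(h)]+\sum_{i=1}^4[\O_{\wt X}(e_i-K_{\wt X}-h)]$; the analogous computation yields rank $5$, $c_1=-3K_{\wt X}$, and $\mathrm{ch}_2=\frac{5}{2}$ on both sides.

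The main subtlety is justifying the adjunction formula above in sufficient generality, which requires $\pi^*\G\in D^b(\wt X)$ for all $\G\in D^b(X)$; this holds because $\pi$ has finite Tor-amplitude, a consequence of $X$ being Gorenstein with rational singularities. With this in place, the remaining verification is a routine Chern-class computation on the weak del Pezzo surface $\wt X$ using the intersection data recalled in \S\ref{qdp}.
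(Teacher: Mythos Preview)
Your Chern-character computations on $\wt X$ are correct and match the paper's exactly, and the overall strategy of reducing to an identity in $K_0(\wt X)$ is the same as the paper's. The gap is in how you lift $\G$ to $\wt X$.

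You assert that $\pi$ has finite Tor-amplitude because $X$ is Gorenstein with rational singularities, and use this to conclude $\pi^*\G\in D^b(\wt X)$. This is false. For $p\in X$ an ADE point, adjunction gives
\[
\Ext^i_{\wt X}(L\pi^*\O_p,\O_E)\cong \Ext^i_X(\O_p,R\pi_*\O_E)\cong \Ext^i_X(\O_p,\O_p),
\]
where $E=\pi^{-1}(p)_{\mathrm{red}}$; the right-hand side is nonzero for all $i\ge 0$ (the local ring at $p$ is a non-regular hypersurface, so the minimal free resolution of the residue field is infinite and eventually periodic). If $L\pi^*\O_p$ were bounded it would be perfect on the smooth surface $\wt X$, forcing the left-hand side to vanish for $i\gg 0$. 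So $L\pi^*\O_p\notin D^b(\wt X)$, and your appeal to Hirzebruch--Riemann--Roch on $\wt X$ with the unbounded object $\pi^*\G$ is not justified: Chern characters and the HRR formula are only available for bounded (equivalently, perfect) complexes there.

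The paper avoids this by lifting in the other direction: since $\pi_*:D^b(\wt X)\to D^b(X)$ is essentially surjective (a consequence of $\ker\pi_*=\langle\O_\Delta(-1)\rangle^\oplus$ quoted just before Lemma~\ref{pf}), one writes $\G=\pi_*\wt\G$ for some $\wt\G\in D^b(\wt X)$ and uses the adjunction $\chi_X(F,\pi_*\wt\G)=\chi_{\wt X}(\wt F,\wt\G)$. Now both arguments on $\wt X$ are bounded, HRR applies, and your Chern-character identities finish the proof. Replacing your $\pi^*\G$ with such a $\wt\G$ repairs the argument completely; everything else you wrote stands.
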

\begin{proof}
Since $\pi_*: D^b(\wt{X})\to D^b(X)$ is essentially surjective, there exists $\wt{\G}\in D^b(\wt{X})$ such that $\G=\pi_*\wt{\G}$. The adjunction implies that $\chi(F, \G)=\chi(F, \pi_*\wt{\G})=\chi(\pi^*F, \wt{\G})=\chi(\wt{F}, \wt{\G})$. Similarly, one has $\chi(\Q, \G)=\chi(\P,\wt{\G})$. Let $\wt{x}\in\wt{X}$ be the point with image $x=\pi(\wt{x})$. The constructions of $\wt{F}, \P$ provide the following equations of Chern characters:
\[
\begin{array}{rl}
\text{ch}(\wt{F})= & \text{ch}(\O_{\wt{X}}(h))+\text{ch}(\O_{\wt{X}}(-K_{\wt{X}}-h))=2-K_{\wt{X}}+\frac{1}{2}\O_{\wt{x}}\\
= & \text{ch}(\O_{\wt{X}})+\text{ch}(\O_{\wt{X}}(-K_{\wt{X}}))-2\text{ch}(\O_{\wt{x}}),
\end{array}
\]
\[
\begin{array}{rl}
\text{ch}(\P) = & \text{ch}(\O_{\wt{X}}(h))+\sum_{i=1}^4\text{ch}(\O_{\wt{X}}(e_i-K_{\wt{X}}-h))=5-3K_{\wt{X}}+\frac{5}{2}\O_{\wt{x}}\\
= & 2\text{ch}(\O_{\wt{X}})+3\text{ch}(\O_{\wt{X}}(-K_{\wt{X}}))-5\text{ch}(\O_{\wt{x}})
\end{array}
\]
Hence, Hirzebruch-Riemann-Roch implies that $\chi(F, \G)=\chi(\wt{F}, \wt{\G})=\chi(\O_{\wt{X}}, \wt{\G})+\chi(\O_{\wt{X}}(-K_{\wt{X}}), \wt{\G})-2\chi(\O_{\wt{x}}, \wt{\G})$. Applying the adjunction again yields the result and the argument for the second equation is similar.
\end{proof}
\begin{lemma}\label{semistable}
Let $\G$ be a semistable sheaf on $X$ whose Hilbert polynomial is $h_d(t)$ for $d=2, 3$.

(i) If $d=2$, then $\G\cong F$;

(ii) If $d=3$, then $\G\cong \pi_*\O(D)$ with $D\in\{h, e_i-K_{\wt{X}}-h, 1\leqslant i \leqslant 4\}$.
\end{lemma}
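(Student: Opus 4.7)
The plan is to use the semiorthogonal decomposition $D^b(X)=\langle\O_X,F,\A_3\rangle$ of Theorem \ref{ssod} together with the Euler characteristic formula of the previous lemma, handling $d=2$ and $d=3$ separately.

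For $d=2$, I would note that the reduced Hilbert polynomial $p(\G)=\tfrac{5(t+1)^2}{2}$ is non-integer at $t=0$, so no rank-one subsheaf of either $\G$ or $F$ can realize this reduced Hilbert polynomial, and semistability upgrades to stability. Applying the previous lemma with $\G(K_X)$ in place of $\G$ and using Serre duality on the Gorenstein surface yields $\chi(\G,F)=\chi(F,\G(K_X))=h_2(-1)+h_2(-2)-2\cdot 2=1$. Since $p(F)>p(\G(K_X))$ for $t\gg 0$, semistability of $F$ forces $\Hom(F,\G(K_X))=0$, so $\Ext^2(\G,F)=0$ by Serre duality and hence $\Hom(\G,F)\neq 0$. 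Any nonzero $\phi\colon\G\to F$ must have rank-two image (else its image would be a rank-one subsheaf of $F$ with non-integer Hilbert polynomial), so $\phi$ is injective with cokernel of trivial Hilbert polynomial, giving $\G\cong F$.

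For $d=3$, I would first place $\G$ in $\A_3={}^\perp\O_X\cap{}^\perp F$. The reduced Hilbert polynomial inequalities $p(\G)>p(\O_X)$, $p(\O_X)>p(\G(K_X))$, $p(\G)>p(F)$, and $p(F)>p(\G(K_X))$ all hold for $t\gg 0$; semistability of $\G$, $\O_X$, $F$, and $F(K_X)$ then gives $\Hom(\G,\O_X)=\Hom(\O_X,\G(K_X))=\Hom(\G,F)=\Hom(F,\G(K_X))=0$, and Serre duality converts the second and fourth vanishings into $\Ext^2(\G,\O_X)=\Ext^2(\G,F)=0$. The Euler characteristics $\chi(\G,\O_X)=h_3(-1)=0$ and $\chi(\G,F)=h_3(-1)+h_3(-2)-2=0$ then kill the remaining $\Ext^1$-groups, so $\G\in\A_3$.

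For the identification, I would pass to the crepant resolution $\pi\colon\wt{X}\to X$ and write the reflexive hull as $\G^{**}=\pi_*\O(D)$ for some $D\in\mathrm{Pic}(\wt{X})$ (with $R^1\pi_*\O(D)=0$ by Kawamata-Viehweg for the anti-canonically positive classes that arise). Setting $n=\mathrm{length}(\G^{**}/\G)$, Riemann-Roch on $\wt{X}$ translates $h_{\G^{**}}=h_3+n$ into $K_{\wt{X}}\cdot D=-3$ and $D^2=1+2n$. Writing $D=ah-\sum_{j=1}^{4}b_je_j$ in the standard basis of $\mathrm{Pic}(\wt{X})$, the constraints read $\sum b_j=3(a-1)$ and $\sum b_j^2=a^2-1-2n$; Cauchy--Schwarz gives $\sum b_j^2\geq\tfrac{9(a-1)^2}{4}$, hence $5a^2-18a+13+8n\leq 0$. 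A short case analysis in $a\in\mathbb{Z}$ yields solutions only for $n=0$ with $a\in\{1,2\}$, producing exactly the five divisor classes $D=h$ and $D=2h-e_j-e_k-e_l=e_i-K_{\wt{X}}-h$. Hence $\G$ is reflexive and equals $\pi_*\O(D)$ for one of these $D$. The main obstacle is this identification step in the singular setting: verifying the correspondence $\G^{**}=\pi_*\O(D)$ between rank-one reflexive sheaves on $X$ and line bundles on $\wt{X}$, together with the vanishing $R^1\pi_*\O(D)=0$ ensuring Hilbert polynomial compatibility under push-forward via crepancy $\pi^*K_X=K_{\wt{X}}$; these reduce the classification to the numerical analysis on $\mathrm{Pic}(\wt{X})$ outlined above.
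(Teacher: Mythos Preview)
Your argument for (i) is correct and is actually a bit more direct than the paper's. The paper places $\G$ into $\A_2={}^{\perp}\O_X\cap\Q^{\perp}$ (so it needs orthogonality to the rank-$5$ bundle $\Q$ as well as to $\O_X$) and then uses $\A_2=\langle F\rangle\simeq D^b(k)$ to conclude. You instead compute $\chi(\G,F)=1$ and use stability of both sheaves to produce an isomorphism; this avoids $\Q$ entirely. The paper's route has the advantage of treating (i) and (ii) uniformly, while yours is more elementary.

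For (ii) your placement of $\G$ in $\A_3={}^{\perp}\O_X\cap{}^{\perp}F$ is exactly what the paper does. The divergence---and the gap---is in the identification step. Your numerical analysis on $\mathrm{Pic}(\wt X)$ hinges on writing $\G^{**}=\pi_*\O(D)$ with $R^1\pi_*\O(D)=0$, but the divisor $D$ is only determined modulo the lattice spanned by the $(-2)$-curves, and the vanishing of $R^1\pi_*$ depends on the representative. If you allow $m:=\mathrm{length}\,R^1\pi_*\O(D)>0$, then Riemann--Roch on $\wt X$ computes $h_{R\pi_*\O(D)}=h_3+(n-m)$, your inequality becomes $5a^2-18a+13+8(n-m)\leq 0$, and you only conclude $n=m$, not $n=0$. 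So the argument as written does not show $\G$ is reflexive. This can be repaired (choose the unique representative with $D\cdot E_i\geq 0$ for all exceptional curves $E_i$ and invoke Lemma~\ref{r1van}), but it is extra work.

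The paper sidesteps all of this: having put $\G$ in $\A_3\simeq D^b(Z)$ with $Z$ Artinian of length $5$, one observes that the equivalence $\alpha$ sends every finite-length $\O_Z$-module to a sheaf on $X$ (simples go to the $\pi_*\O(D)$'s, and extensions of sheaves are sheaves). A short $t$-structure argument then shows any \emph{sheaf} in $\A_3$ lies in the image of $\mathrm{Coh}(Z)$, hence is an iterated extension of the $\pi_*\O(D)$'s; since each of these has Hilbert polynomial $h_3$, the equality $h_\G=h_3$ forces a single factor. In other words, you have already done the hard part (orthogonality to $\O_X$ and $F$); rather than leaving $\A_3$ for a numerical argument on $\wt X$, you should exploit the structure of $\A_3$ you have in hand.
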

\begin{proof}
Note that $\Q$ is a vector bundle of rank $5$ with $h_{\Q}=5h_3$ and it is constructed from iterated extensions of stable sheaves $\pi_*\O(D)$ with $D\in\{h, e_i-K_{\wt{X}}-h, 1\leqslant i \leqslant 4\}$. By Lemma \ref{ss}, $\Q$ is semistable. Moreover, as rank $1$ sheaf, $\O_X$ is stable with $h_{\O_X}(t)=\frac{5}{2}t^2+\frac{5}{2}t+1$.

(i) $d=2$: First we show that $\G\in\A_2$. By Lemma \ref{cg}, we need to prove that $\Ext^*(\G,\O_X)=\Ext^*(\Q,\G)=0$. Since $h_3(t)>\frac{1}{2}h_2(t)>h_{\O_X}(t)$, by semistability,
\[\Hom(\Q,\G)=\Hom(\G,\O_X)=0.\]
Since $\frac{1}{2}h_2(t)>h_3(t-1)$ and $h_{\O_X}(t)>\frac{1}{2}h_2(t-1)$, 
\[\Hom(\G, \Q(K_X))=\Hom(\O_X, \G(K_X))=0.\]
By Serre Duality \cite[Proposition 2.6]{kuzdp6},
\[\Ext^2(\Q,\G)=\Ext^2(\G, \O_X)=0.\]
Note that $\Ext^i(\G, \O_X)=H^{2-i}(\G(K_X))^*$ and $\Ext^i(\Q, \G)=H^i(\Q^*\otimes\G)$ where $()^*$ is the dual. Hence, the Ext groups are zero for $i>2$. To show that $\G\in\A_2$, it remains to see that 
\[\chi(\G, \O_X)=\chi(\G(K_X))=h_2(-1)=0\] 
and 
\[\chi(\Q,\G)=2\chi(\G)+3\chi(\G(K_X))-5\cdot\mathrm{rank}(\G)=2h_2(0)+3h_2(-1)-5\cdot 2=0,\] 
which follows from the above lemma and the fact that the leading coefficient of $h_{\G}=h_2$ is $\frac{5}{2}\mathrm{rank}(\G)$.

Since $\G\in\A_2=\langle F\rangle=D^b(k)$ and as a pure sheaf, $\G$ is concentrated in degree $0$, $\G$ is a direct sum of $F$. Thus, $h_{\G}=h_F=h_2$ implies that $\G\cong F$. The proof for (ii) is similar.
\end{proof}
With the preparation of lemmas above, the same proof in \cite[Theorem 4.5]{kuzdp6} gives
\begin{thm}\label{ms}
Let $\M_d(X), d\in\{2,3\}$ be the moduli spaces of Gieseker semistable sheaves on $X$ with Hilbert polynomials $h_d(t)$ with respect to $-K_X$.  Then $\M_d(X)$ are fine moduli spaces. Moreover,

(i) $\M_2(X)\cong\mathrm{Spec(k)}$ and the vector bundle $F$ is the universal family;

(ii) $\M_3(X)\cong Z$ as in Theorem \ref{ssod} and the vector bundle $\Q$ is the universal family.
\end{thm}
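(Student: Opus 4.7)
The plan is to follow the strategy of \cite[Theorem 4.5]{kuzdp6}, adapted to the quintic setting. The essential ingredients are already in place: Lemma \ref{semistable} classifies the semistable sheaves with Hilbert polynomial $h_d$ up to isomorphism on $X$, and Theorem \ref{ssod} together with Lemma \ref{cg} provides the candidate universal families $F$ and $\Q$ directly on $X$.

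For part (i), Lemma \ref{semistable}(i) shows that every semistable sheaf with Hilbert polynomial $h_2$ is isomorphic to $F$, so the underlying set of $\M_2(X)$ is a single point. Since $F$ is a generator of the exceptional component $\A_2 \simeq D^b(\mathrm{Spec}(k))$, one has $\Hom(F,F) = k$ and $\Ext^i(F,F) = 0$ for $i \geq 1$, so first-order deformations vanish. Hence $\M_2(X) \cong \mathrm{Spec}(k)$ is a reduced point and $F$ is tautologically the universal family.

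For part (ii), Lemma \ref{semistable}(ii) identifies the closed points of $\M_3(X)$ with isomorphism classes among the sheaves $\pi_*\O(D)$ for $D \in \{h, e_i - K_{\wt{X}} - h\}$. Using the short exact sequences (\ref{lb}) together with $\O_{E_p}(-1) \in \ker \pi_*$, sheaves attached to divisors lying in a common chain of $(-2)$-curves push forward to a single sheaf on $X$, producing exactly the reduced points of the scheme $Z$ from Theorem \ref{ssod}. To promote this bijection to a scheme-theoretic isomorphism, I would invoke the Fourier--Mukai equivalence $\A_3 \simeq D^b(Z)$ constructed in \S\ref{dcqdp}: the natural identification $\mathrm{End}_X(\Q) \cong \O_Z$ endows $\Q$ with the structure of a coherent sheaf on $X \times Z$ that is flat over $Z$, whose fibers at the closed points of $Z$ are precisely the semistable sheaves from Lemma \ref{semistable}(ii). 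This defines a classifying morphism $Z \to \M_3(X)$, and the derived equivalence identifies the Ext-algebras $\Ext^*_X(\pi_*\L_0, \pi_*\L_0) \cong \Ext^*_Z(\O_z, \O_z)$, matching deformations and obstructions at each point so that the classifying map is an isomorphism.

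The universal property of $F$ and $\Q$ is verified by applying the corresponding Fourier--Mukai functors to any flat family of semistable sheaves: the classification in Lemma \ref{semistable} forces each fiber to lie in $\A_2$ respectively $\A_3$, and the equivalences produce the required classifying morphism from the base. The main technical obstacle I anticipate lies in part (ii): showing that the derived equivalence genuinely recovers the non-reduced scheme structure of $\M_3(X)$ at the singular points of $Z$, and not just its reduction. This requires a careful bookkeeping of the formal deformation theory at each closed point, beyond merely matching tangent spaces, and it is here that the full strength of the tilting description of $\A_3 \simeq D^b(Z)$ is used.
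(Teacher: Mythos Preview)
Your proposal is correct and follows essentially the same approach as the paper, which simply states that ``the same proof in \cite[Theorem 4.5]{kuzdp6} gives'' the result after the preparatory lemmas; you have in fact supplied more of the argument than the paper does, and your identification of the main technical point (recovering the non-reduced structure of $\M_3(X)$ via the equivalence $\A_3\simeq D^b(Z)$) is accurate.
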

%--------Global Generation-----------
\section{Global Generation}\label{global generation}
We use the same notation as section \ref{dcqdp} and prove that the rank $2$ vector bundle $F$ is globally generated. First, we provide a useful vanishing lemma.
\begin{lemma}\label{r1van}
Let $V$ be a surface with an isolated singular point $v$, which is of $A_n$ type. Let $f: \wt{V}\to V$ be the minimal resolution and $E=f^{-1}(v)$ be the exceptional locus. Then $E=E_1+\dots+E_n$ is a chain of $(-2)$-curves. Let $\O(D)$ be an invertible sheaf on $\wt{V}$ with degrees $d_i= D\cdot E_i, 1\leqslant i\leqslant n$. If for some $l\in \{1, \dots, n\}, d_l\geqslant -1$ and $d_i\geqslant 0$ for $i\neq l$, then $R^1f_*\O(D)=0$. 
\end{lemma}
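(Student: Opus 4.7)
Since $f$ is an isomorphism outside $v$, the sheaf $R^1f_*\O(D)$ is coherent and supported at $v$, so by the Theorem on Formal Functions applied to the thickenings $mE$ it suffices to show $H^1(Z,\O_Z(D))=0$ for every effective cycle $Z=\sum m_iE_i$ supported on $E$. I would prove this by induction on $|Z|=\sum m_i$; the base case $|Z|=1$, i.e.\ $Z=E_i$, follows immediately from $H^1(\mathbb{P}^1,\O(d_i))=0$ and the hypothesis $d_i\geq -1$ for every $i$.

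For the inductive step, pick an irreducible component $E_i$ with $m_i\geq 1$ and set $Z'=Z-E_i\geq 0$. Twisting the standard sequence $0\to\O_{E_i}(-Z')\to\O_Z\to\O_{Z'}\to 0$ by $\O(D)$ and invoking the inductive vanishing $H^1(\O_{Z'}(D))=0$ reduces us, via the long exact sequence, to $H^1(\mathbb{P}^1,\O(d_i-Z'\cdot E_i))=0$. Since $E_i^2=-2$ gives $Z'\cdot E_i=Z\cdot E_i+2$, this translates into the requirement
\[Z\cdot E_i\leq d_i-1,\]
so the task is to exhibit some $i$ in the support of $Z$ satisfying this inequality.

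Negative definiteness of the intersection form on $\bigoplus\mathbb{Z}E_j$ forces $Z^2=\sum_j m_j(Z\cdot E_j)<0$, hence some $i$ with $m_i\geq 1$ and $Z\cdot E_i\leq -1$ exists. If such an $i$ can be chosen with $i\neq l$, then $d_i\geq 0$ gives $Z\cdot E_i\leq -1\leq d_i-1$; and when $i=l$ with $d_l\geq 0$ the same estimate works. The only obstruction is the configuration $d_l=-1$, $Z\cdot E_l=-1$, and $Z\cdot E_j\geq 0$ for every $j\neq l$ with $m_j>0$. To rule it out, let $[a,b]$ be the connected component of $\mathrm{supp}(Z)$ containing $l$, so that $m_{a-1}=m_{b+1}=0$. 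Telescoping the discrete Laplacian on this component gives
\[\sum_{j=a}^b(m_{j-1}-2m_j+m_{j+1})=(m_{a-1}-m_a)+(m_{b+1}-m_b)=-(m_a+m_b),\]
while the assumed configuration makes this same sum $\sum_{j=a}^b Z\cdot E_j$ at least $-1$. Hence $m_a+m_b\leq 1$, impossible when $a<b$ (since $m_a,m_b\geq 1$); and the degenerate case $a=b=l$ produces $Z\cdot E_l=-2m_l\neq -1$, again a contradiction.

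The main obstacle is precisely the relaxation $d_l=-1$: negative definiteness alone only guarantees $Z\cdot E_i\leq -1$, and the sharper estimate $Z\cdot E_l\leq -2$ that is needed when one is forced into $i=l$ is supplied by the discrete convexity argument on the $A_n$ chain above.
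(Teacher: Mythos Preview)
Your argument is correct and takes a genuinely different route from the paper's. The paper also reduces via formal functions (citing Artin to identify the infinitesimal neighbourhoods with $pE$), but then \emph{builds up} to $pE$ by adding one component at a time in a carefully chosen order $E_l,E_{l+1},\dots,E_n,E_{l-1},\dots,E_1$, checking at each step that the restriction to the added curve has degree $\geq -1$. You instead \emph{tear down} an arbitrary effective cycle $Z$ by peeling off a component $E_i$ with $Z\cdot E_i\leq d_i-1$, and produce such an $i$ abstractly from negative definiteness together with the telescoping identity $\sum_{j=a}^b Z\cdot E_j=-(m_a+m_b)$ on a connected interval of the support. The paper's approach is more hands-on and needs the specific ordering; yours is order-free and more conceptual, and in fact proves the stronger statement that $H^1$ vanishes on \emph{every} effective exceptional cycle, not just the multiples $pE$. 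One small point: your opening invocation of ``thickenings $mE$'' tacitly uses Artin's identification of the formal neighbourhoods with $pE$ for $A_n$ singularities (the paper cites this explicitly); since you go on to prove vanishing for all effective $Z$ anyway, this is harmless, but it would be cleaner to cite the result or to note that the ideals $\O(-mE)$ are cofinal with the pullbacks of $\mathfrak m_v^n$.
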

\begin{proof}
For $p\geqslant 1$, define $E_{(p)}=\wt{V} \times_V \mathrm{Spec}(\O_v/m_v^p)$ where $m_v$ is the maximal ideal of the local ring $\O_v$ at the point $v$. By theorem of formal functions, $H^1(E_{(p)}, \O_{E_{(p)}}(D))=0$ for all $p\geqslant 1$ implies $R^1f_*\O(D)=0$. By Theorem $4$ in \cite{aratsing}, one has $E_{(p)}=pE=pE_1+\dots+pE_n$. We will prove the vanishing of $H^1(E_{(p)},\O_{E_{(p)}}(D))$ inductively.

First let $l=1$. It is clear that $H^1(E_1, \O_{E_1}(D))=0$ and assume that $H^1(W, \O_W(D))=0$ for $W=mE_1+\dots+mE_i+(m-1)E_{i+1}+\dots+(m-1)E_n, m\geqslant 1, n\geqslant i\geqslant 1$. Let $Z=W+E_{i+1}$ and identify $E_{n+1}=E_1$. We have the short exact sequence
\[0\to \O_{E_{i+1}}(D-W)\to \O_Z(D)\to \O_W(D)\to 0.\]
There are $3$ different cases: if $1\leqslant i\leqslant n-2$, then $E_{i+1}\cdot(D-W)=E_{i+1}\cdot D-1\geqslant -1$; if $i=n-1$, then $E_{i+1}\cdot(D-W)=E_{i+1}\cdot D+m-2\geqslant -1$; if $i=n$, then $E_{i+1}\cdot(D-W)= E_{i+1} \cdot D+m\geqslant 0$.
Therefore, $H^1$ of the first sheaf is $0$ and we have $H^1(Z, \O_Z(D))=0$.

In the argument above, the vanishing of $H^1$ is proved by adding divisors in the order of $E_1, \dots, E_n$. In the general case, the same proof applies by changing the order to $E_l,\dots, E_n, E_{l-1}, \dots, E_1$. 
\end{proof}
\begin{lemma}
Let $\pi: \wt{X}\to X$ be the minimal resolution. Then 

(i) $\pi_*\O_{\wt{X}}(h)$ is globally generated, $R^1\pi_*\O_{\wt{X}}(h)=0$ and 
\[
h^i(\O(h))=
\left\{
\begin{array}{ll}
3, &i=0\\
0, & i\neq 0\\
\end{array}
\right.
;\]
(ii) $R^1\pi_*\O(-K_{\wt{X}}-h)=0, \pi_*\O(-K_{\wt{X}}-h)$ is globally generated and
\[
h^i(\O(-K_{\wt{X}}-h))=
\left\{
\begin{array}{ll}
2, &i=0\\
0, & i\neq 0\\
\end{array}
\right.
;\]
(iii) $F$ and thus $\wt{F}=\pi^*F$ are globally generated and\[
h^i(X, F)=h^i(\wt{X}, \wt{F})=
\left\{
\begin{array}{ll}
5, &i=0\\
0, & i\neq 0\\
\end{array}
\right.
.\] 
Moreover, $\det(\wt{F})=\O(-K_{\wt{X}})$ and $\det(F)=\O(-K_X)$.
\end{lemma}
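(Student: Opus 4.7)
My plan is to handle (i) and (ii) in parallel via Lemma \ref{r1van} and cohomology computations on $\wt{X}$, and then derive (iii) from the defining extension (\ref{uex}). For (i), every $(-2)$-curve is either $\Delta = e_i - e_j$ with $h \cdot \Delta = 0$ or $\Delta = h - e_i - e_j - e_l$ with $h \cdot \Delta = 1$, so $h$ has non-negative degree on every component of every chain and Lemma \ref{r1van} immediately gives $R^1\pi_*\O(h) = 0$. Leray, combined with the identification $\O_{\wt{X}}(h) = \sigma^*\O_{\p^2}(1)$ where $\sigma:\wt{X} \to \p^2$ is the composition of blow-downs (each with $R^j\text{bl}_*\O = 0$ for $j \geq 1$), then gives $H^\bullet(X, \pi_*\O(h)) = H^\bullet(\p^2, \O(1)) = (k^3, 0, 0)$. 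For global generation of $\pi_*\O(h)$, the three pulled-back sections globally generate $\O(h)$ on $\wt{X}$; at smooth points of $X$ generation of the pushforward is automatic, and at singular points I would invoke cohomology and base change (justified by $R^1\pi_* = 0$) to reduce to generation of $H^0(\pi^{-1}(x), \O(h)|_{\pi^{-1}(x)})$.

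For (ii), setting $D = -K_{\wt{X}} - h = 2h - \sum e_i$, I compute $D \cdot (e_i - e_j) = 0$ and $D \cdot (h - e_i - e_j - e_l) = -1$; Lemma \ref{r1van} then applies provided each chain of $(-2)$-curves over a singular point of $X$ contains at most one curve of the second type, which I would verify directly from the enumeration in \S\ref{qdp}. For cohomology I use
\[
0 \to \O(-K_{\wt{X}}-h) \to \O(-K_{\wt{X}}) \to \O_L(-K_{\wt{X}}) \to 0
\]
for $L \in |h|$ a smooth member, together with $h^\bullet(\wt{X}, \O(-K_{\wt{X}})) = (6,0,0)$ (standard for a degree $5$ weak del Pezzo, where $-K_{\wt{X}}$ induces an embedding into $\p^5$) and $h^\bullet(L, \O_L(-K_{\wt{X}})) = h^\bullet(\p^1, \O(3)) = (4, 0, 0)$; the long exact sequence yields $h^\bullet(\O(-K_{\wt{X}}-h)) = (2, 0, 0)$. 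Global generation of $\pi_*\O(-K_{\wt{X}}-h)$ is the delicate step: the pencil $|-K_{\wt{X}}-h|$ on $\wt{X}$ may acquire a fixed $(-2)$-component $\Delta_{ijl}$, but this component is contracted by $\pi$, so factoring it out yields a residual basepoint-free class whose pushforward coincides with $\pi_*\O(-K_{\wt{X}}-h)$.

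For (iii), $R^1\pi_*\O(-K_{\wt{X}}-h) = 0$ from (ii) allows me to push (\ref{uex}) forward to
\[
0 \to \pi_*\O(-K_{\wt{X}}-h) \to F \to \pi_*\O(h) \to 0
\]
on $X$; the cohomology long exact sequence combined with $H^1(X, \pi_*\O(-K_{\wt{X}}-h)) = 0$ yields $h^\bullet(F) = (5, 0, 0)$ together with the surjection $H^0(F) \twoheadrightarrow H^0(\pi_*\O(h))$, so global generation of $F$ follows from that of both subquotients. The determinant identity $\det\wt{F} = \O(-K_{\wt{X}}-h) \otimes \O(h) = \O(-K_{\wt{X}})$ is immediate from (\ref{uex}), and $\det F = \O(-K_X)$ follows from $\wt{F} = \pi^*F$ (via (\ref{vb})) together with crepancy $\pi^*\O(-K_X) = \O(-K_{\wt{X}})$. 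The main obstacles I anticipate are the case-by-case chain verification in (ii), and more substantively promoting global generation past the singular points of $X$---especially for $\pi_*\O(-K_{\wt{X}}-h)$, where $\O(-K_{\wt{X}}-h)$ itself fails to be basepoint-free on $\wt{X}$ whenever a $\Delta_{ijl}$ appears in the exceptional locus.
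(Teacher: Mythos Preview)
Your overall strategy is sound, and computing $h^\bullet(\O(-K_{\wt X}-h))$ via the single sequence
\[
0 \to \O(-K_{\wt X}-h) \to \O(-K_{\wt X}) \to \O_L(-K_{\wt X}) \to 0
\]
is a genuine simplification over the paper's type-by-type analysis. You do need one extra line: the sequence gives $h^1(-K_{\wt X}-h)=0$ only once you know $H^0(\O(-K_{\wt X})) \to H^0(\O_L(-K_{\wt X}))$ is surjective, which holds because the anticanonical morphism sends a generic $L\in|h|$ to a twisted cubic spanning a $\p^3\subset\p^5$.

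The real gap is the global generation of $\pi_*\O(-K_{\wt X}-h)$, which is where the paper spends almost all of its effort. Your claim that factoring out a single $\Delta_{ijl}$ leaves a basepoint-free residual is false as stated: in type (II.3), removing $\Delta_{134}$ leaves $|h-e_2|$, and since $(h-e_2)\cdot\Delta_{12}=-1$ every member of this pencil still contains the $(-2)$-curve $\Delta_{12}$; one must peel that off as well before reaching the basepoint-free $|h-e_1|$. Moreover, in the (*.1) types there is no $\Delta_{ijl}$ in the exceptional locus at all, so your argument says nothing and you must verify directly that $|-K_{\wt X}-h|$ is already basepoint-free on $\wt X$, which is not automatic when the blown-up points are infinitely near. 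The paper handles both issues by case analysis: for the (*.1) types it exhibits two members of $|-K_{\wt X}-h|$ with disjoint support; for the remaining types it identifies $\pi_*\O(-K_{\wt X}-h)$ with $\pi_*\O(l_i)$ for a suitable basepoint-free pencil $|l_i|=|h-e_i|$. In either situation, the passage from basepoint-freeness on $\wt X$ to global generation of the pushforward is achieved by pushing forward the evaluation sequence $0\to\O(-D)\to\O_{\wt X}^{2}\to\O(D)\to 0$ and using $R^1\pi_*\O(-D)=0$ from Lemma~\ref{r1van}. This is also the correct substitute for your ``cohomology and base change'' in (i), which does not apply directly since $\pi$ is not flat; the paper instead pushes forward the pulled-back Euler sequence and uses $R^1\pi_*f^*\Omega_{\p^2}(h)=0$.
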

\begin{proof}
(i) Let $f: \wt{X}\to \mathbb{P}^2$ be the blow up of $4$ points $x_1,\dots, x_4$. Pulling back the Euler sequence on $\mathbb{P}^2$ along $f$, we get
\[0\to f^*\Omega_{\mathbb{P}^2}(h)\to \O_{\wt{X}}^3\to \O_{\wt{X}}(h)\to 0.\]
The restriction of $f^*\Omega_{\mathbb{P}^2}(h)$ on $\Delta_{ij}$ is trivial and on $\Delta_{ijl}$ is $\O\oplus\O(-1)$. Therefore, by Lemma \ref{r1van}, $R^1\pi_*f^*\Omega_{\mathbb{P}^2}(h)=0$ and the Euler sequence implies that $\pi_*\O(h)$ is globally generated. The vanishing of $R^1\pi_*\O_{\wt{X}}(h)$ is similar and the computation of $h^i$ is straightforward.

(ii) Lemma \ref{r1van} implies that $R^1\pi_*\O(-K_{\wt{X}}-h)=0$. Let $l_{ij}\in |h-e_i-e_j|$ be the $(-1)$-curve and $l_i\in |h-e_i|$ be the strict transform of the line passing through the point $x_i$ (if they exist). Note that $h^0(\O(l_i))=2, h^p(\O(l_i))=0$ when $p\neq 0$ and $h^0(\O(l_{ij}))=1, h^p(\O(l_{ij}))=0$ when $p\neq 0$ . The computation of $h^i(\O(-K_{\wt{X}}-h))$ depends on the singular type of $X$:

For (I.1) (II.1) (III.1), use
\[0\to \O(h-e_3-e_4)=\O(l_{34})\to\O(-K_{\wt{X}}-h)\to \O_{l_{12}}\to 0.\]

For (IV.1), use further
\[0\to \O(h-e_2-e_4)\to\O(h-e_3-e_4)\to \O_{\Delta_{23}}(-1)\to 0,\]
\[0\to \O(l_{14})\to \O(h-e_2-e_4)\to \O_{\Delta_{12}}(-1)\to 0\]
which imply that $h^i(\O(h-e_3-e_4))=h^i(\O(l_{14}))$. Similarly, for the case (V.1), we have $h^i(\O(h-e_3-e_4))=h^i(\O(l_{12}))$.

For (I.2) (II.2) (IV.2), use
\[0\to \O(h-e_4)=\O(l_4)\to \O(-K_{\wt{X}}-h)\to\O_{\Delta_{123}}(-1)\to 0.\]

For (III.2), use further
\[0\to \O(l_3)\to\O(h-e_4)\to \O_{\Delta_{34}}(-1)\to 0\]
which implies that $h^i(\O(h-e_4))=h^i(\O(l_3))$. Similarly, for the case (V.2), we have $h^i(\O(h-e_4))=h^i(\O(l_1))$.

For (II.3), use
\[0\to\O(h-e_2)\to \O(-K_{\wt{X}}-h)\to \O_{\Delta_{134}}(-1)\to 0,\]
\[0\to \O(l_1)\to \O(h-e_2)\to\O_{\Delta_{12}}(-1)\to 0.\]

Similarly for (IV.3), use
\[0\to\O(h-e_3)\to \O(-K_{\wt{X}}-h)\to \O_{\Delta_{124}}(-1)\to 0\]
and $h^i(\O(h-e_3))=h^i(\O(l_1))$.

For cases (*.1), we deduce that $\O(-K_{\wt{X}}-h)$ is base-point free because it has sections:
\begin{description}
\item[(I.1)] $l_{12}+l_{34}$ and $l_{13}+l_{24}$;
\item[(II.1)] $l_{12}+l_{34}$ and $l_{13}+l_{14}+\Delta_{12}$;
\item[(III.1)] $l_{12}+l_{34}$ and $2l_{13}+\Delta_{12}+\Delta_{34}$;
\item[(IV.1)] $l_{12}+l_{14}+\Delta_{12}+\Delta_{23}$ and $C_1$;
\item[(V.1)] $2l_{12}+\Delta_{12}+2\Delta_{23}+\Delta_{34}$ and $C_2$.
\end{description}
Here $C_1, C_2\in |-K_{\wt{X}}-h|=|2h-e_1-e_2-e_3-e_4|$ are conics. The kernel of the evaluation map $\O_{\wt{X}}^2\to \O(-K_{\wt{X}}-h)$ is reflexive and thus an invertible sheaf. Therefore, we have
\[0\to \O(K_{\wt{X}}+h)\to \O_{\wt{X}}^2\to\O(-K_{\wt{X}}-h)\to 0.\]
Moreover, $R^1\pi_*\O(K_{\wt{X}}+h)=0$ by Lemma \ref{r1van} and thus $\pi_*\O(-K_{\wt{X}}-h)$ is globally generated.

For the rest cases, from the computation above, we have $\pi_*\O(-K_{\wt{X}}-h)=\pi_*\O(l_i)$ for some $i$. Since $\O(l_i)$ is base-point free, the short exact sequence coming from extending the evaluation map
\[0\to \O(-l_i)\to\O_{\wt{X}}^2\to\O(l_i)\to 0\]
plus $R^1\pi_*\O(-l_i)=0$ imply that $\pi_*\O(l_i)$ is globally generated.

(iii) It is clear that $\det(\wt{F})=\O(-K_{\wt{X}})$ and thus $\det(F)=\pi_*\pi^*\det(F)=\pi_*\det(\wt{F}) = \O(-K_X)$. The rest follows from (i)(ii).
\end{proof}
%-----------Galois Descent----------
\section{Galois Descent}\label{galois descent}
Let $k$ be an arbitrary field with the separable closure $k_s$. Let $\Gamma=\Gal(k_s/k)$ be the absolute Galois group. Let $Y$ be a projective variety over $k$. Denote the base extension to $k_s$ by $Y_{k_s}= Y\times_k k_s$. Fix a projective variety $W$ over $k$. We say $Y$ is a \textit{twisted form} of $W$ if there is a $k_s$-isomorphism $\phi: W_{k_s}\to Y_{k_s}$. Twisted forms of $W$ are classified by the first Galois cohomology $H^1(k, \mathrm{Aut}_{k_s}(W_{k_s}))=H^1(\Gamma, \mathrm{Aut}_{k_s}(W_{k_s}))$ \cite[III \S1.3]{galoiscoh}.

In detail, the correspondence is given as follows. $W_{k_s}$ and $Y_{k_s}$ have a natural Galois action with $\Gamma$ acting on the factor $k_s$. For $\sigma\in\Gamma$, define $a_{\sigma}= \phi^{-1} \circ \sigma \circ \phi \circ \sigma^{-1}$. Then $a_{\sigma}\in\mathrm{Aut}_{k_s}(W_{k_s})$ is a $1$-cocycle, i.e. $a_{\sigma\tau}=a_{\sigma}{}^{\sigma}a_{\tau}$ ($\Gamma$ acts on $\mathrm{Aut}_{k_s}(W_{k_s})$ by inner automorphisms). The form $Y$ corresponds to the cocycle class $[a_{\sigma}]$. A different choice of $\phi$ produces the same cocycle class. Conversely, for a $1$-cocycle class, choose a $1$-cocycle representative $a_{\sigma}\in\mathrm{Aut}_{k_s}(W_{k_s})$. Define an associated twisted $\Gamma$-action on $W_{k_s}$ by sending $(\sigma, x)\in \Gamma\times W_{k_s}$ to $a_{\sigma}(\sigma(x))$. Since $a_{\sigma}$ is a $1$-cocycle, $a_{\sigma\tau}(\sigma\tau(x))=a_{\sigma}\sigma(a_{\tau})(\sigma\tau(x))=a_{\sigma}\sigma(a_{\tau}\tau(x))$. Thus, we indeed obtain a $\Gamma$-action. Taking the invariants of this twisted action, we obtain $Y=(W_{k_s})^{\Gamma}$ as a twisted form of $W$ over $k$. A different choice of the cocycle representative produces an isomorphic form.

Let $A$ be a central simple $k$-algebra. Write $\mathrm{SB}_r(A)$ for the generalized Severi-Brauer variety, which by definition is the variety of right ideals of dimension $r\deg A$ over $k$. It is a twisted form of Grassmannians because for a vector space $V$, one has $\mathrm{SB}_r(\mathrm{End}(V))\cong\Gr(r, V)$. For more details, see \cite[I \S1]{boi}. 
\begin{lemma}\label{ggm}
Let $Y$ be a projective variety over $k$. Let $l$ be a Galois extension of $k$ and $G=\Gal(l/k)$ be its Galois group. Let $Y_l=Y\times_k l$ be the field extension equipped with the natural $G$-action. The following are equivalent:

(i) There exists a morphism $f: Y\to \mathrm{SB}_r(A)$ over $k$ where $A$ is a central simple $k$-algebra that splits over $l$, i.e. $A\otimes_k l=\mathrm{End}(W)$ for some vector space $W$ over $l$.

(ii) There exists a $G$-invariant globally generated vector bundle $N$ of rank $r$ on $Y_l$ such that one can choose, for each $\sigma\in G$, an isomorphism $\phi_{\sigma}: {}^{\sigma}N\to N$ over $Y_l$ satisfying $ \phi_{\sigma}{}^{\sigma}\phi_{\tau}\phi_{\sigma\tau}^{-1}\in l^{\times}\subset\mathrm{Aut}_{Y_l}(N)$ for any $\sigma,\tau\in G$ (the inclusion is given by multiplying elements of $l^{\times}$). Here ${}^{\sigma}N$ denotes the pull-back of $N$ along $\sigma: Y_l\to Y_l$. 

Moreover, given (ii), if the global section $H^0(Y_l, N)$ has dimension $n$, then $A$ can be chosen to have degree $n$ and $N$ is the pull-back of a vector bundle on $Y$ if and only if the Brauer class $[A]\in\mathrm{Br}(k)$ is trivial.
\end{lemma}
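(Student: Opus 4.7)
The plan is to view generalized Severi-Brauer varieties as twisted Grassmannians and to translate the projective Galois equivariance of $N$ into a scalar $2$-cocycle on $G$ which encodes the Brauer class of a central simple algebra $A$ of degree $n = \dim_l H^0(Y_l, N)$. This is a geometric version of the standard dictionary between $1$-cocycles valued in $\mathrm{PGL}$ and central simple algebras, specialized to morphisms into Severi-Brauer varieties.

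For $(i) \Rightarrow (ii)$, I would fix a splitting $A \otimes_k l \cong \mathrm{End}_l(W)$, use it to identify $\mathrm{SB}_r(A)_l$ with $\Gr(r, W)$, and set $N := f_l^* \mathcal{T}$ where $\mathcal{T}$ is the tautological globally generated rank $r$ quotient bundle on the Grassmannian. Different splittings of $A \otimes_k l$ differ by elements of $\mathrm{GL}_l(W)$ whose action on $\Gr(r, W)$ factors through $\mathrm{PGL}_l(W)$, and in particular the Galois translates of the chosen splitting induce comparison isomorphisms which, pulled back along $f_l$, give $\phi_\sigma : {}^\sigma N \to N$ whose cocycle defect is exactly the scalar ambiguity of the original splitting, hence lies in $l^\times \subset \mathrm{Aut}_{Y_l}(N)$.

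For $(ii) \Rightarrow (i)$, set $V := H^0(Y_l, N)$, of dimension $n$. Applying $H^0$ yields $\phi_\sigma^H : {}^\sigma V \to V$ with the same projective cocycle defect. Conjugation $X \mapsto \phi_\sigma^H \circ {}^\sigma X \circ (\phi_\sigma^H)^{-1}$ defines an honest semilinear $G$-action on $\mathrm{End}_l(V)$ since the scalar ambiguities cancel, so Galois descent for finite dimensional $l$-algebras produces a central simple $k$-algebra $A$ of degree $n$ with $A \otimes_k l \cong \mathrm{End}_l(V)$. Because $N$ is globally generated, the evaluation surjection $V \otimes \mathcal{O}_{Y_l} \twoheadrightarrow N$ classifies a morphism $g: Y_l \to \Gr(r, V)$, and the compatibility of $\phi_\sigma$ with evaluation says that $g$ is equivariant for the natural $\Gamma$-action on $Y_l$ and the twisted $\Gamma$-action on $\Gr(r, V)$ associated to the $1$-cocycle defining $A$. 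Since the twisted form of $\Gr(r, V)$ by this cocycle is by definition $\mathrm{SB}_r(A)$, Galois descent of $g$ yields the desired $k$-morphism $f: Y \to \mathrm{SB}_r(A)$.

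For the final statements, the argument shows that the obstruction to descending $N$ itself is exactly the cocycle $c_{\sigma, \tau} = \phi_\sigma \, {}^\sigma\phi_\tau \, \phi_{\sigma\tau}^{-1} \in l^\times$, whose class in $H^2(G, l^\times)$ is precisely $[A] \in \mathrm{Br}(k)$. If $[A] = 0$, then $c$ is a coboundary and rescaling the $\phi_\sigma$ by a suitable $1$-cochain $\lambda_\sigma \in l^\times$ converts them into a strict descent datum, giving $N_0$ on $Y$ with $N_0 \otimes_k l \cong N$. Conversely, a descent $N_0$ of $N$ provides $H^0(Y, N_0) \otimes_k l \cong V$ as a $G$-module, which trivializes the cocycle and forces $A$ to be a matrix algebra. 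I expect the main obstacle to be pinning down the bookkeeping of the cocycle, in particular making precise how the scalar $2$-cocycle on the sheaf side corresponds to the $1$-cocycle in $\mathrm{PGL}_n(l)$ defining $A$ (equivalently, navigating the connecting map of $1 \to l^\times \to \mathrm{GL}_n(l) \to \mathrm{PGL}_n(l) \to 1$) and matching sub/quotient conventions on the Grassmannian; once these are fixed, the remainder is formal Galois descent.
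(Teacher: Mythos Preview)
Your proposal is correct and follows essentially the same route as the paper: pull back the tautological rank~$r$ bundle from the Grassmannian (the paper uses the dual $\R^*$ of the universal subbundle, which is your $\mathcal{T}$ up to the sub/quotient convention you flag), pass to global sections to obtain a projective $1$-cocycle in $\mathrm{PGL}(V)$, and descend the induced morphism $Y_l\to\Gr(r,V)$ along the corresponding twisted $G$-action. The only cosmetic difference is in the last clause: for ``$[A]$ trivial $\Rightarrow$ $N$ descends'' the paper argues geometrically (when $A=M_n(k)$ one has $\mathrm{SB}_r(A)=\Gr(r,n)$ over $k$ and $N=f^*\R^*$ is already defined over $k$), whereas you rescale the $\phi_\sigma$ by a trivializing $1$-cochain; these are interchangeable.
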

\begin{proof}
Given $f: Y\to \mathrm{SB}_r(A)$, after field extension, we obtain a $G$-invariant morphism $f\times_k l: Y_l\to \Gr(r, W)$ where $W$ is a vector space over $l$.  Let $\R$ be the universal subbundle of $\Gr(r,W)$ and denote its dual by $\R^*$. Then $H^0(\Gr(r, W), \R^*)=W^*$ and $\Hom(\R^*, \R^*)=l$. We check that $N=(f\times_k l)^*(\R^*)$ is the required vector bundle in (ii).

Clearly, the vector bundle $N$ is globally generated of rank $r$. There is a natural map $\mathrm{Aut}(W)\to \mathrm{Aut}(\Gr(r, W))$ which factors through $\mathrm{PGL}(W)$ and the form $\mathrm{SB}_r(A)$ corresponds to the image of $[A]$ under the induced map $H^1(G,\mathrm{PGL}(W))\to H^1(G, \mathrm{Aut}(\Gr(r, W)))$. Since $\R^*$ is invariant under $\mathrm{Aut}(W)$, we have $\R^*$ and thus $N$ are $G$-invariant. Therefore, there are isomorphisms $\psi_{\sigma}: {}^{\sigma}\R^*\to \R^*$ for $\sigma\in G$. Because $\Hom(\R^*, \R^*)=l$, one has $\psi_{\sigma}{}^{\sigma}\psi_{\tau}\psi_{\sigma\tau}^{-1}\in l^{\times}$ for $\sigma, \tau\in G$. The isomorphisms $\phi_{\sigma}$ can be chosen to be the pull-backs of $\psi_{\sigma}$ along $f\times_k l$.

Conversely, given (ii), isomorphisms $\phi_{\sigma}: {}^{\sigma}N\to N$ correspond to isomorphisms $\varphi_{\sigma}: N\to\sigma_*N$ and they induce $l$-automorphisms $b_{\sigma}=H^0(\varphi_{\sigma})$ on $V^*=H^0(Y_l, N)$. The condition ${}^{\tau}b_{\sigma}b_{\tau}b_{\tau\sigma}^{-1}=\phi_{\sigma}{}^{\sigma}\phi_{\tau}\phi_{\sigma\tau}^{-1}\in l^{\times}$ implies that elements $a_{\sigma}\in \mathrm{Aut}(\Gr(r, V))$ induced by $b_{\sigma}\in \mathrm{Aut(V^*)}$ form a $1$-cocycle. We equip $\Gr(r, V)$ with the twisted $G$-action associated to $a_{\sigma}$. Let $g: Y_l\to \Gr(r, V)$ be the morphism induced by the surjection $V^*\otimes_l \O_{Y_l}\to N$. By construction, it is $G$-invariant and thus descends to $f: Y\to \mathrm{SB}_r(A)$. Note that $A\otimes_k l=\mathrm{End}(V)$. Hence, the degree of $A$ is equal to the dimension of $V^*=H^0(Y_l,N)$.

Finally, $[A]\in H^1(G, \mathrm{PGL}(V))$ is the Brauer class induced by $b_{\sigma}$. If $N$ is the pull-back of a vector bundle on $Y$, then the isomorphisms $\phi_{\sigma}$ can be chosen such that $\phi_{\sigma}{}^{\sigma}\phi_{\tau}\phi_{\sigma\tau}^{-1}=1$. Thus, $[A]$ is trivial. On the other hand, if $[A]$ is trivial, then $A=M_n(k)$ and $\mathrm{SB}_r(A)=\Gr(r, n)$. Now regard $\R$ as the universal subbundle of $\Gr(r, n)$ over $k$. Then $N$ is the pull-back of $f^*(\R^*)$ on $Y$.
\end{proof}
Now let $X$ be a quintic del Pezzo surface over $k$ with rational Gorenstein singularities and let $\pi: \wt{X}\to X$ be its minimal resolution. Theorem 1 of \cite{cseps} states that every geometrically rational surface is separably split. In our case, it indicates that $\wt{X}_{k_s}$ is the blow-up of $\p^2$ at $4$ points as in \S\ref{qdp} and $X_{k_s}$ is obtained by contracting $(-2)$-curves on $\wt{X}_{k_s}$. Since the assumption of the base field $k$ being algebraically closed is only placed to make $X$ split, all previous results apply to $X_{k_s}$ as well. Recall that there is a rank $2$ vector bundle $F=\pi_{k_s*}\wt{F}$ on $X_{k_s}$ defined by (\ref{uex}) and (\ref{vb}).
\begin{lemma}
Vector bundles $\wt{F}$ and $F$ are Galois invariant.
\end{lemma}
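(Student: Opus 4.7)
The plan is to deduce Galois invariance from the moduli-theoretic characterization of $F$ proved in Section \ref{moduli}. By Lemma \ref{semistable}(i), every Gieseker semistable sheaf on $X_{k_s}$ with Hilbert polynomial $h_2(t)=5(t+1)^2$ (computed with respect to $-K_{X_{k_s}}$) is isomorphic to $F$. Both ingredients of this characterization descend to $k$: the anticanonical class $-K_X$ is defined over $k$, hence preserved by every $\sigma\in\Gamma$, and Gieseker semistability with respect to a Galois-fixed polarization is preserved by pullback along $\sigma$. It is crucial that this characterization does not reference the non-canonical decomposition $\wt{X}_{k_s}\to\cdots\to\p^2$, whose individual components (in particular the class $h$ used to define $\wt{F}$ by the extension \eqref{uex}) need not themselves be Galois invariant.

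Consequently, for every $\sigma\in\Gamma$ the pullback $\sigma^*F$ is a Gieseker semistable sheaf on $X_{k_s}$ with Hilbert polynomial $h_2(t)$, and uniqueness forces $\sigma^*F\cong F$, which is the Galois invariance of $F$. The statement for $\wt{F}$ then follows formally: since the resolution $\pi:\wt{X}\to X$ is defined over $k$, its base change $\pi_{k_s}$ is Galois equivariant, and the identification $\wt{F}=\pi_{k_s}^*F$ recorded immediately after \eqref{vb} (via Lemma \ref{pf}) yields
\[
\sigma^*\wt{F}\;\cong\;\pi_{k_s}^*(\sigma^*F)\;\cong\;\pi_{k_s}^*F\;=\;\wt{F}.
\]

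The one point deserving care is that Lemma \ref{semistable} was formulated with the base field assumed algebraically closed, while I want to invoke it over the separable closure $k_s$. This is precisely the content of the paragraph preceding the present lemma: by Theorem~1 of \cite{cseps}, every geometrically rational surface is separably split, so $\wt{X}_{k_s}$ already has the explicit form of \S\ref{qdp} and the contents of Sections \ref{dcqdp} and \ref{moduli} extend verbatim with $k$ replaced by $k_s$. I do not expect any substantial obstacle beyond acknowledging this transfer; the whole argument reduces to invoking the moduli-theoretic uniqueness on the split form and transporting $F$ back to $\wt{F}$ via the Galois-equivariant morphism $\pi_{k_s}$.
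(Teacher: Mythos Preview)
Your argument is correct and constitutes a genuinely different proof from the one in the paper. The paper proceeds in the opposite direction: it first establishes Galois invariance of $\wt{F}$ by showing that the subcategory $\langle\wt{F}\rangle=\wt{\A}_2={}^{\perp}\O_{\wt{X}_{k_s}}\cap\wt{\A}_3^{\perp}$ is Galois invariant, which in turn is reduced to checking that the set of five line bundles $\{\O(h),\,\O(e_i-K_{\wt{X}_{k_s}}-h)\}$ generating $\wt{\A}_3$ is permuted by the Weyl group $S_5$ acting on $\mathrm{Pic}(\wt{X}_{k_s})$; invariance of $F$ then follows by pushforward along the Galois-equivariant $\pi_{k_s}$. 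Your route instead exploits the moduli-theoretic uniqueness of $F$ (Lemma~\ref{semistable}(i)) already established in \S\ref{moduli}, observing that both semistability and the Hilbert polynomial with respect to the $k$-rational polarization $-K_X$ are preserved under $\sigma^*$, and then transports invariance up to $\wt{F}$ via $\pi_{k_s}^*$. Your approach is shorter and avoids the root-system check entirely; the paper's approach, on the other hand, is independent of the stability analysis of \S\ref{moduli} and has the side benefit of showing that the entire semiorthogonal decomposition $\langle\wt{\A}_1,\wt{\A}_2,\wt{\A}_3\rangle$ is Galois stable. There is no circularity in your use of Lemma~\ref{semistable}, since nothing in \S\ref{moduli} relies on the present lemma.
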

\begin{proof}
Note that $F=\pi_{k_s*}\wt{F}$ and the map $\pi_{k_s}$ is Galois invariant. Thus, the Galois invariance of $F$ follows from that of $\wt{F}$. By the semiorthogonal decomposition (\ref{sodwdp}), we have $\langle\wt{F}\rangle=\wt{\A_2}={}^{\perp}\wt{\A_1}\cap\wt{\A_3}^{\perp}={}^{\perp}\O_{\wt{X}_{k_s}}\cap\wt{\A_3}^{\perp}$. The structure sheaf $\O_{\wt{X}_{k_s}}$ is certainly Galois invariant and thus it suffices to show that $\wt{\A_3}$ is Galois invariant. We will achieve this by proving that the set $\{\O(h), \O(e_i-K_{\wt{X}_{k_s}}-h), 1\leqslant i\leqslant 4\}$ is stable under $\mathrm{Aut}(\wt{X}_{k_s})$, i.e. automorphisms permute elements of the set. The action of $\mathrm{Aut}(\wt{X}_{k_s})$ on $\mathrm{Pic}(\wt{X}_{k_s})$ preserves $K_{\wt{X}_{k_s}}$ and inner product. By Theorem 23.9 of \cite{cubicforms}, it suffices to check that the set is stable under the Weyl group of the root system $R=K_{\wt{X}_{k_s}}^{\perp}\subset \mathrm{Pic}(\wt{X}_{k_s})\otimes_{\mathbb{Z}}\mathbb{R}$. It is straightforward to check that the reflections corresponding to simple roots $e_i-e_{i+1}, 1\leqslant i\leqslant 3, h-e_1-e_2-e_3$ permute the set.
\end{proof}
\begin{lemma}\label{gdf}
The vector bundle $F$ on $X_{k_s}$ descends to $X$. That is, $F$ is the pull-back of a vector bundle on $X$ along the natural projection $p:X_{k_s}\to X$.
\end{lemma}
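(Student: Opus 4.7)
The plan is to verify the hypotheses of Lemma \ref{ggm} with $l=k_s$, $G=\Gamma$, $Y=X$, $r=2$, and $N=F$, and then to argue that the resulting central simple algebra $A$ splits. Galois-invariance of $F$ is the content of the previous lemma. Section \ref{global generation} shows that $F$ is globally generated with $h^0(X_{k_s},F)=5$. Moreover, $F$ is exceptional on $X_{k_s}$, so $\Hom(F,F)=k_s$; consequently any choice of isomorphisms $\phi_\sigma\colon{}^\sigma F\to F$ for $\sigma\in\Gamma$ automatically satisfies $\phi_\sigma\,{}^\sigma\phi_\tau\,\phi_{\sigma\tau}^{-1}\in k_s^\times$, which is exactly the cocycle condition of Lemma \ref{ggm}(ii). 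Invoking that lemma produces a central simple $k$-algebra $A$ of degree $5$ and a morphism $f\colon X\to\mathrm{SB}_2(A)$, and tells us that $F$ is a pull-back from $X$ if and only if $[A]=0\in\mathrm{Br}(k)$.

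To force $[A]=0$, I would use the standard criterion that $\mathrm{SB}_r(A)$ has a $k$-rational point if and only if $\mathrm{ind}(A)$ divides $r$. This follows directly from the paper's definition of $\mathrm{SB}_r(A)$ as the variety of right ideals of $k$-dimension $r\deg A$ together with the Wedderburn decomposition $A\cong M_{5/\mathrm{ind}(A)}(D)$, under which all right-ideal $k$-dimensions are multiples of $\mathrm{ind}(A)\cdot\deg A$. Since $\mathrm{ind}(A)$ divides $\deg A=5$, while a $k$-point on $\mathrm{SB}_2(A)$ would additionally force $\mathrm{ind}(A)\mid 2$, one would conclude $\mathrm{ind}(A)\mid\gcd(5,2)=1$, so $A$ is split.

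It therefore remains to exhibit a $k$-rational point of $X$ and push it forward along $f$. Here I would invoke the rationality results quoted in the introduction: a quintic del Pezzo surface with rational Gorenstein singularities over an arbitrary field $k$ is $k$-rational, with the smooth case due to \cite{sbratqdp} and the singular case to \cite[Theorem 9.1(b)]{ctssdp}. Since $k$-rationality produces a dominant rational map $\mathbb{P}^2\to X$ defined over $k$ and $\mathbb{P}^2(k)$ is Zariski-dense, this map carries $k$-points in its domain of definition to $k$-points of $X$; in particular $X(k)\neq\emptyset$, and the proof closes.

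The main obstacle is really just to import the right external input, namely the existence of a $k$-point on $X$; everything else is formal. The numerical coincidence $\gcd(\deg A, r)=\gcd(5,2)=1$ makes the index calculation immediate, and Galois-invariance plus exceptionality take care of the cocycle condition for free. If the degree and the rank shared a common factor one would instead need finer information about the Brauer class of $A$, rather than a mere divisibility argument.
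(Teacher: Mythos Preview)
Your proof is correct and follows exactly the paper's approach: apply Lemma~\ref{ggm} using Galois-invariance and exceptionality of $F$ to get a map $X\to\mathrm{SB}_2(A)$ with $\deg A=5$, then use rationality of $X$ to produce a $k$-point on $\mathrm{SB}_2(A)$, forcing $\mathrm{ind}(A)\mid\gcd(2,5)=1$. The only cosmetic point is that your Zariski-density remark for $\mathbb{P}^2(k)$ tacitly assumes $k$ infinite, but for finite $k$ one has $\mathrm{Br}(k)=0$ and there is nothing to prove.
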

\begin{proof}
From the previous lemma, $F$ is Galois invariant. Moreover, $\Hom(F,F)=k_s$ because $F$ is an exceptional object. Thus, $F$ is a vector bundle satisfying Lemma \ref{ggm} (ii) and we have a morphism $f: X\to \mathrm{SB}_2(A)$ where $A$ is a central simple $k$-algebra of degree $5$. Since $X$ is rational (mentioned in the introduction), $\mathrm{SB}_2(A)(k)\neq\emptyset$. \cite[Proposition 1.17]{boi} indicates that the index of $A$ divides $2$. Being of degree $5$, it forces $A$ to be split and thus $F$ descends.
\end{proof}
\begin{lemma}\label{lsgr}
Let $X$ be a quintic del Pezzo surface over $k$ with rational Gorenstein singularities. Let $N$ be a rank $2$ vector bundle on $X$ with $\det(N)=\O_X(-K_X)$ and a surjection map $\O_X^{\oplus 5}\to N$. We have the following commutative diagram:
\begin{center}
\begin{tikzcd}
X\arrow{r}{f}\arrow[hook]{d}{g} &\Gr(2, 5)\arrow[hook]{d}{h}\\
\p^5\arrow[hook]{r}{i} & \p^9\\
\end{tikzcd}
\end{center}
where $f, g$ are induced by the surjection $\O_X^{\oplus 5}\to N$ and the linear system of $\det(N)$ respectively and $h$ is the Pl\"ucker embedding. Then $f$ is injective and $X=\Gr(2,5)\cap\p^5\subset \p^9$.  By symmetry of $\Gr(2,5)\cong\Gr(3,5)$, the same result holds if $N$ is of rank $3$. 
\end{lemma}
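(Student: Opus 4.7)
The plan is to split the statement into two parts: first, showing that $f$ is a closed immersion (which immediately yields injectivity); second, identifying the image with the scheme-theoretic intersection $Y:=\Gr(2,5)\cap\p^5$ in $\p^9$ via a Hilbert polynomial comparison.

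For the first part, I would observe that on a rational Gorenstein quintic del Pezzo surface $-K_X$ is very ample with $h^0(X,-K_X)=6$, so the complete linear system $|\det N|=|-K_X|$ realizes $g\colon X\hookrightarrow\p^5$ as the anticanonical closed immersion. Commutativity of the diagram forces $h\circ f=i\circ g$ to be a closed immersion, and since the Pl\"ucker map $h$ is itself a closed immersion, so is $f$; in particular, $f$ is injective. The commutativity of the diagram then yields the closed inclusion $X\subseteq Y$ inside $\p^9$.

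For the equality, I plan to compare Hilbert polynomials. Riemann-Roch on $X$ gives $P_X(t)=\chi(\O_X(-tK_X))=\tfrac{5}{2}t^2+\tfrac{5}{2}t+1$. Provided the four linear forms cutting $\p^5$ out of $\p^9$ restrict to a regular sequence on $\Gr(2,5)$, tensoring the Koszul complex on $\p^9$ with $\O_{\Gr(2,5)}$ yields a resolution of $\O_Y$ by twists of $\O_{\Gr(2,5)}$; using the known Hilbert polynomial of the Pl\"ucker-embedded $\Gr(2,5)$ (dimension $6$, degree $5$ in $\p^9$), a direct computation of the four-fold finite difference yields $P_Y(t)=P_X(t)$. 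Then the inclusion $X\subseteq Y$ with matching Hilbert polynomials forces the surjection $\O_Y\twoheadrightarrow\O_X$ to have kernel of zero Hilbert polynomial, hence zero, so $X=Y$. The rank $3$ case follows identically via the isomorphism $\Gr(3,5)\cong\Gr(2,5)$ induced by $\wedge^3 k^5\cong(\wedge^2 k^5)^{*}$.

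The main obstacle is verifying that $Y$ has the expected dimension $2$, equivalently that the four linear forms restrict to a regular sequence on $\Gr(2,5)$. The inclusion $X\subseteq Y$ already gives $\dim Y\geq 2$, so the task is to exclude components of dimension $\geq 3$. One approach is to exploit that the maximal linear subvarieties of $\Gr(2,5)$ are the Schubert $\p^3$'s of the form $\{V\in\Gr(2,5):v\in V\}$ and the Schubert $\p^2$'s $\{V:V\subset W\}$; any higher-dimensional component of $Y$ would force $\p^5$ to contain such a Schubert subvariety, and combined with the fact that the anticanonical image $g(X)$ is linearly non-degenerate in $\p^5$, this should contradict $\O_X^{\oplus 5}\twoheadrightarrow N$ being a genuine rank-$2$ quotient with $\det N=\O_X(-K_X)$ at every point of $X$.
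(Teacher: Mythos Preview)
Your injectivity argument matches the paper's. The gap is in the identification $X=Y:=\Gr(2,5)\cap\p^5$: your Hilbert-polynomial comparison is valid only after $\dim Y=2$ is established, and your proposed verification via maximal linear subvarieties of $\Gr(2,5)$ is not justified. A component of $Y$ of dimension $\geq 3$ is a subvariety of $\Gr(2,5)$ lying in a $\p^5$, but nothing you have said forces such a component to be \emph{linear}, so the classification of Schubert $\p^3$'s and $\p^2$'s does not apply. The non-degeneracy of $g(X)$ in $\p^5$ only tells you that $\p^5$ is not contained in a smaller linear space; it does not by itself rule out an extra $3$-dimensional component of $Y$. This is exactly the point where the hypothesis that \emph{this particular} $\p^5$ contains the anticanonical model of a quintic del Pezzo must be used, and your sketch does not use it in a decisive way.

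The paper avoids the dimension question entirely by comparing homogeneous ideals. With $J$ the ideal sheaf of $X$ in $\p^9$, one computes $h^0(J(1))=4$ and $h^0(J(2))=39$; of the $39$-dimensional space of quadrics through $X$, a $34$-dimensional subspace (namely $4\cdot 10-\binom{4}{2}$) consists of reducible quadrics divisible by one of the four linear forms cutting out $\p^5$, so the five Pl\"ucker quadrics of $\Gr(2,5)$ span the remaining $5$-dimensional complement. The key external input is that a rational Gorenstein quintic del Pezzo is an intersection of $5$ quadrics in its anticanonical $\p^5$ (Hidaka--Watanabe). Hence the homogeneous ideal of $X$ in $\p^9$ is generated in degrees $\leq 2$ and therefore coincides with the ideal generated by the four linear forms together with the five Pl\"ucker quadrics, which is the ideal of $\Gr(2,5)\cap\p^5$. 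No dimension count or Koszul regularity is needed.
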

\begin{proof}
The map $f$ is injective because $i\circ g$ is. Let $J$ be the ideal of $X$ in $\p^9$. The short exact sequences
\[0\to J\to \O_{\p^9}\to \O_X\to 0\]
twisted with $\O_{\p^9}(1), \O_{\p^9}(2)$ imply $h^0(J(1))=4, h^0(J(2))=39$. Moreover, among the $39$-dimensional family of quadrics containing $X$, $34$-dimension are from degenerate quadrics. On the other hand, $\Gr(2,5)$ is the intersection of $5$ nondegenerate quadrics in $\p^9$. By \cite[Theorem 4.4(i)]{hwgdp}, $X$ is the intersection of $5$ quadrics in $\p^5$. This implies that $X=\Gr(2,5)\cap\p^5$.
\end{proof}

In $\S$\ref{hpdapproach}, we provide two constructions for realizing a quintic del Pezzo surface $X$ as a linear section of $\Gr(2,5)$. By the theory of HPD, $D^b(X)$ can be described explicitly given that the dual linear section of $X$ has the expected dimension. We verify below that the dual linear section does have the expected dimension.

Let $V_5$ be a $5$-dimensional vector space over $k$ and $V_5^*$ be the dual vector space. Let $W_6$ be a $6$-dimensional subspace of $\bigwedge^2 V_5$ and define $W_6^{\perp}:=\ker(\bigwedge^2 V_5^*\to W_6^*)$ as its orthogonal. 
\begin{lemma}\label{dualsection}
Set $X:= \Gr(2, V_5)\cap \p(W_6)$ inside $\p(\bigwedge^2 V_5)$ and $Y:= \Gr(2, V_5^*)\cap\p(W_6^{\perp})$ inside $\p(\bigwedge^2 V_5^*)$. If $\dim X=2$ and $X$ has rational Gorenstein singularity, then $\dim (Y)=0$.
\end{lemma}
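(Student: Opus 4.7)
The plan is to argue by contradiction. Assume $\dim Y \geqslant 1$ and pick an irreducible curve $C \subset Y$. First, observe that every $[U] \in Y$ forces $X \subset \sigma(U^\perp) := \{[W] \in \Gr(2,V_5) : W \cap U^\perp \neq 0\}$: writing $U = \langle u_1, u_2 \rangle \subset V_5^*$ and letting $U^\perp \subset V_5$ be its $3$-dimensional annihilator, the decomposable $2$-form $u_1 \wedge u_2 \in W_6^\perp$ vanishes on $W_6$ by definition of the orthogonal, so the zero hyperplane of $u_1 \wedge u_2$ in $\p(\bigwedge^2 V_5)$ contains $\p(W_6)$, and its intersection with $\Gr(2,V_5)$ is exactly the Schubert hyperplane section $\sigma(U^\perp)$. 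Hence $X \subset \sigma(U^\perp)$ for every $[U] \in C$.

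Set $L := \bigcap_{[U] \in C} U^\perp$ and $P := \sum_{[U] \in C} U^\perp$ in $V_5$. For any $[W]$ in $\bigcap_{[U] \in C} \sigma(U^\perp)$, one considers whether the line $W \cap U^\perp$ stays in a fixed line of $W$ as $[U]$ moves along $C$ (in which case that fixed line lies in $L$, so $W \cap L \neq 0$) or sweeps out all of $W$ (in which case $W \subset P$); likewise if $W \subset U^\perp$ for some $[U]$ then $W \subset P$. This yields
\[
X \;\subset\; \Gr(2,P) \;\cup\; \{[W] \in \Gr(2,V_5) : W \cap L \neq 0\}.
\]
Because $X$ is an irreducible normal surface --- being connected as a proper intersection by Fulton--Hansen, and normal by the rational Gorenstein hypothesis --- it is contained entirely in one of these two Schubert-type subvarieties.

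The contradiction then comes from combining B\'ezout with Lemma~\ref{lsgr}: as a proper $2$-dimensional intersection of $\Gr(2,V_5)$ with $\p(W_6)$ in $\p^9$, $X$ has degree $\deg\Gr(2,V_5) = 5$ in $\p^5 = \p(W_6)$, and by Lemma~\ref{lsgr} it is non-degenerate there. When $\dim P \leqslant 4$ or $\dim L \geqslant 1$, the relevant Schubert subvariety spans only a proper linear subspace of $\p^9$ of projective dimension at most $6$ (namely $\p(\bigwedge^2 P)$ or $\p(L \wedge V_5)$), so the containment of $X$ forces either $X$ into a proper linear subspace of $\p^5$ (contradicting non-degeneracy) or onto a linear section of wrong dimension: for instance, $\dim P = 4$ and $W_6 = \bigwedge^2 P$ gives $X = \Gr(2,P) = Q^4$ of dimension $4$, while $\dim L = 1$ gives $\sigma_1(L) \cong \p^3$ which cannot contain a non-degenerate surface of $\p^5$.

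The main obstacle will be the degenerate situation $\dim L = 0$, $\dim P = 5$, where the union above becomes all of $\Gr(2,V_5)$ and the containment is vacuous. To handle this, one uses that the linear span of $C$ lies in $\p(W_6^\perp) = \p^3$, together with the classification of linear subvarieties of $\Gr(2,V_5^*)$: its maximal linear subspaces are the $\p^3$'s of the form $\{[U] : L' \subset U\}$ for $L'$ a line in $V_5^*$, and the $\p^2$'s of the form $\Gr(2, P')$ for $P'$ a $3$-plane in $V_5^*$. Since $C$ fits inside a $\p^3$, this classification forces $C$ either to have a common line inside all its $U$'s (giving $\dim L \geqslant 1$) or to lie in some $\Gr(2, P')$ (giving $\dim P \leqslant 4$), reducing back to the cases already handled.
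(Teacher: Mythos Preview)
Your overall strategy --- forcing $X$ into a small Schubert-type subvariety and contradicting non-degeneracy --- is sound through the case $\dim P=4$ or $\dim L=1$, but the handling of the degenerate case $\dim L=0,\ \dim P=5$ contains a genuine error.

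You assert that because the linear span of $C$ sits inside the $\p^3=\p(W_6^\perp)$, the classification of linear subspaces \emph{on} $\Gr(2,V_5^*)$ forces $C$ into an $\alpha$-plane or a $\beta$-plane. This is false: the $\p^3$ spanned by $C$ is a linear subspace of the ambient $\p(\bigwedge^2 V_5^*)$, and nothing says it lies on the Grassmannian. Concretely, with a basis $e_1,\dots,e_5$ of $V_5^*$, take
\[
C:\ t\longmapsto\bigl[\langle e_1+te_3+t^2e_5,\ e_2+te_4\rangle\bigr]\ \subset\ \Gr(2,V_5^*).
\]
In Pl\"ucker coordinates this is the twisted cubic $(a:b)\mapsto(a^3:a^2b:ab^2:-b^3)$ in the $\p^3$ spanned by $e_{12},\,e_{14}-e_{23},\,e_{34}-e_{25},\,e_{45}$; one checks that the decomposable locus in this $\p^3$ is exactly that cubic, so the span is \emph{not} contained in $\Gr(2,V_5^*)$. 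For this $C$ one has $U_0=\langle e_1,e_2\rangle$, $U_\infty=\langle e_4,e_5\rangle$, $U_1=\langle e_1+e_3+e_5,\,e_2+e_4\rangle$, whence $\sum U=V_5^*$ and $\bigcap U=0$, i.e.\ $\dim L=0$ and $\dim P=5$. So the degenerate case genuinely occurs for curves in a $\p^3$, and your classification argument does not exclude it. (You also have the two implications reversed: a common line in all $U$'s gives $\dim P\le4$, and $C\subset\Gr(2,P')$ gives $\dim L\ge2$.) To rescue this route you would need to bring the hypothesis on $X$ back into the degenerate case, which your argument does not do.

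For comparison, the paper's proof is entirely different and much shorter: it chooses a $5$-plane $W_5\subset W_6$ so that $\Gr(2,V_5)\cap\p(W_5)$ is a smooth elliptic curve (possible by \cite[Proposition~4.2(i)]{hwgdp} since $X$ has a smooth anticanonical section), invokes the known self-duality of smooth elliptic linear sections of $\Gr(2,5)$ from \cite[Proposition~2.24]{dkgm} to conclude that $\Gr(2,V_5^*)\cap\p(W_5^\perp)$ is again a smooth elliptic curve, and observes that $Y$ is a hyperplane section of it.
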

\begin{proof}
The assumption implies that $X$ is a rational Gorenstein quintic del Pezzo surface such that $X\hookrightarrow \p(W_6)$ is the anticanonical embedding. By \cite[Proposition 4.2(i)]{hwgdp}, there exists a $5$-dimensional $W_5\subset W_6$ such that the hyperplane section $C:= \Gr(2, V_5)\cap \p(W_5)$ of $X$ is a smooth elliptic curve. Thus, the dual linear section $C':= \Gr(2, V_5^*)\cap \p(W_5^{\perp})$ is also a smooth elliptic curve by \cite[Proposition 2.24]{dkgm}. Since $Y$ is a hyperplane section of $C'$, $\dim(Y)=0$.
\end{proof}
%---------Quintic Del Pezzo Fibrations----------------
\section{Quintic Del Pezzo Fibrations}\label{qdpf}
Let $k$ be an arbitrary field and $\bar{k}$ be the algebraic closure.
\begin{defn}\label{qdpfdef}
Let $f:\mathcal{X}\to S$ be a flat morphism over $k$. The map $f$ is a \textit{quintic del Pezzo fibration} if for any point $s\in S$, the fiber $\mathcal{X}_s$ is a quintic del Pezzo surface with rational Gorenstein singularities. Denote the geometric fiber over the point $s\in S$ by $\mathcal{X}_{\bar{s}}$.
\end{defn}
As is the case in Appendix B of \cite{kuzdp6}, we can consider the moduli stack $\mathfrak{DP}_5$ of singular quintic del Pezzo surfaces. It is the fibered category over the category of schemes over a field $k$ whose fiber over a $k$-scheme $S$ is the groupoid of all quintic del Pezzo fibrations over $S$ in the above sense. Theorem B.1 in \textit{loc. cit.} also proves the following fact.
\begin{prop}
The moduli stack $\mathfrak{DP}_5$ is a smooth Artin stack of finite type over $k$.
\end{prop}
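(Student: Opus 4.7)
The plan is to exhibit $\mathfrak{DP}_5$ as a quotient stack $[U/\mathrm{PGL}(V_5)]$ of an open subscheme of a Grassmannian, following the strategy of \cite[Theorem B.1]{kuzdp6} for the sextic case.

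First I would fix a $5$-dimensional $k$-vector space $V_5$ and set $G := \Gr(6, \bigwedge^2 V_5)$. Over $G$ the tautological $6$-plane $\mathcal{W}_6 \subset \bigwedge^2 V_5 \otimes \O_G$ cuts out a family
\[
\mathcal{Y} := \bigl(\Gr(2, V_5) \times G\bigr) \cap \p_G(\mathcal{W}_6) \longrightarrow G,
\]
and I would let $U \subset G$ be the locus where this fiber is $2$-dimensional and has only rational Gorenstein singularities. Upper semicontinuity of fiber dimension together with openness of the rational Gorenstein locus in a flat projective family of surfaces show that $U$ is open in $G$; once restricted to $U$, the morphism $\mathcal{Y}|_U \to U$ is flat because the Hilbert polynomial of a linear section $\Gr(2,V_5) \cap \p(W_6)$ in the Pl\"ucker polarization is constant on $U$. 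This provides a tautological quintic del Pezzo fibration over $U$.

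Next, $\mathrm{GL}(V_5)$ acts on $G$ through its induced action on $\bigwedge^2 V_5$, preserves $U$, and the center acts trivially, so the action factors through $\mathrm{PGL}(V_5)$. I would construct a morphism $[U/\mathrm{PGL}(V_5)] \to \mathfrak{DP}_5$ from the family $\mathcal{Y}|_U$ and produce its inverse as follows: given $f:\mathcal{X}\to S$ in $\mathfrak{DP}_5(S)$, cohomology and base change for $\O_{\mathcal{X}}(-K_{\mathcal{X}/S})$ yields a rank $6$ locally free sheaf on $S$ and a relative anticanonical embedding $\mathcal{X} \hookrightarrow \p_S\bigl(f_*\O_{\mathcal{X}}(-K_{\mathcal{X}/S})\bigr)$, while the relative version of the rank $2$ bundle $F$ of Section \ref{dcqdp} (constructed fiberwise and shown to descend by Lemmas \ref{ggm} and \ref{gdf}) supplies a rank $2$ quotient of $\O_{\mathcal{X}}^{\oplus 5}$ with determinant $\O_{\mathcal{X}}(-K_{\mathcal{X}/S})$. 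Applying Lemma \ref{lsgr} fiberwise realizes $\mathcal{X}$ \'etale-locally on $S$ as a linear section $\Gr(2, V_5) \cap \p(W_6)$, providing the classifying morphism $S \to [U/\mathrm{PGL}(V_5)]$.

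Finally, $U$ is open in the smooth finite-type $k$-scheme $G$, and $\mathrm{PGL}(V_5)$ is a smooth affine algebraic group of finite type over $k$, so $\mathfrak{DP}_5 \simeq [U/\mathrm{PGL}(V_5)]$ is a smooth Artin stack of finite type. The main obstacle is packaging the bundle $F$ with its canonical $5$-dimensional space of global sections into a genuine family over $S$, rather than merely fiberwise; I would handle this either by appealing to the fine relative moduli space of semistable sheaves with Hilbert polynomial $h_2(t)$ developed in the subsequent sections of the paper, or by globalizing the Galois descent argument of Section \ref{galois descent} through an \'etale base change on $S$. Past that relative bookkeeping, the remainder of the argument is the same formal quotient-stack construction as in the sextic del Pezzo case.
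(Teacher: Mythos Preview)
The paper does not give its own proof here: it simply asserts that Kuznetsov's Theorem~B.1 in \cite{kuzdp6} ``also proves'' the quintic case. Kuznetsov's argument is a Hilbert-scheme construction: one uses the anticanonical embedding $X\hookrightarrow\p^5$ (which requires only that $-K_X$ be very ample, a standard fact for Gorenstein del Pezzo surfaces of degree $\geqslant 3$), takes the open locus in the relevant Hilbert scheme of $\p^5$ parametrizing rational Gorenstein quintic del Pezzo surfaces, and quotients by $\mathrm{PGL}_6$. Nothing beyond classical projective geometry is needed, and the same proof runs verbatim in any degree.

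Your route is genuinely different: you use the $\Gr(2,V_5)$ linear-section structure and quotient $\Gr(6,\bigwedge^2 V_5)$ by $\mathrm{PGL}(V_5)$. This is the content of the paper's \S\ref{hpdapproach}, first construction, repackaged as a stack presentation. It can be made to work, but at a cost you correctly identify: to build the inverse morphism you must globalize the rank~$2$ bundle $F$ over an arbitrary base $S$, and for this you invoke either Proposition~\ref{msf} or the descent arguments of \S\ref{galois descent}. In the paper's logical ordering those results come \emph{after} the present proposition --- in fact the reduction ``it suffices to prove Theorem~\ref{main} when $S$ is smooth'' immediately following the proposition is what allows the Brauer-group argument in Proposition~\ref{msf} to go through. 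So while your argument is not circular in the strict sense (the cited lemmas do not themselves depend on the stack being smooth), it inverts the paper's architecture and imports substantially heavier input than the anticanonical Hilbert-scheme approach, which needs none of \S\ref{global generation}--\ref{qdpf}. The trade-off is that your presentation has the pleasant feature of matching the automorphism group $\mathrm{Aut}(X_{\bar k})\subset W(A_4)=S_5$ directly to stabilizers in $\mathrm{PGL}_5$, whereas the Hilbert-scheme picture sees them inside the much larger $\mathrm{PGL}_6$.
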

Note that combining with the base change of semiorthogonal decompositions (Proposition \ref{bc}), it suffices to prove Theorem \ref{main} when the base $S$ is a smooth variety.
%---------Moduli Space Approach-------------
\subsection{Moduli Space Approach}\label{msapproach}
Let $\M_d(\mathcal{X}/S), d\in\{2,3\}$ be the relative moduli space of semistable sheaves on fibers of $f:\mathcal{X}\to S$ with Hilbert polynomial $h_d(t)$ defined by (\ref{hilb}). Comparing with Theorem \ref{ms}, similar results hold for relative moduli spaces as well.
\begin{prop}\label{msf}
For $d\in\{2,3\}, \M_d(\mathcal{X}/S)$ are fine moduli spaces. Let $\mathcal{E}_d$ be the universal families of $\M_d(\mathcal{X}/S)$. Then

(i) $\M_2(\mathcal{X}/S)\cong S$ and $\mathcal{E}_2 |_{\mathcal{X}_{\bar{s}}}$ is the vector bundle $F$;

(ii) $g: \M_3(\mathcal{X}/S)\cong \mathcal{Z}\to S$ is flat and finite of degree $5$. View $\mathcal{E}_3$ as a sheaf on $\mathcal{X}$ via the finite morphism $\mathcal{X}\times_S \mathcal{Z}\to\mathcal{X}$. The geometric fiber of $g$ is the scheme $Z$ and $\mathcal{E}_3 |_{\mathcal{X}_{\bar{s}}}$ is the vector bundle $\Q$.

Moreover, $\mathcal{E}_d$ is flat over $\M_d(\mathcal{X}/S)$ and is a locally free sheaf over $\mathcal{X}$. In particular, $\mathcal{E}_d$ has finite Tor-amplitude over $\M_d(\mathcal{X}/S)$ and finite Ext-amplitude over $\mathcal{X}$.
\end{prop}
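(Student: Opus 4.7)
The plan is to reduce the relative statements to the single-fiber picture of Theorem \ref{ms} via the standard machinery of moduli of semistable sheaves. First, by relative GIT / Simpson--Langer, $\M_d(\mathcal{X}/S)$ exists as a projective $S$-scheme whose geometric fibers agree with $\M_d(\mathcal{X}_{\bar{s}})$; by Theorem \ref{ms} these are $\mathrm{Spec}(\bar{k}(s))$ for $d=2$ and the length-$5$ scheme $Z$ for $d=3$, so the fiberwise Hilbert polynomial is constant. The standard flatness criterion over the smooth (hence reduced) base $S$ makes $\M_d(\mathcal{X}/S) \to S$ flat, and properness plus the finite fibers promotes this to a finite flat morphism. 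For $d=2$, rigidity $\Ext^1(F,F)=0$ coming from exceptionality of $F$ makes the morphism étale of degree $1$, hence an isomorphism; for $d=3$ we obtain $\mathcal{Z} := \M_3(\mathcal{X}/S) \to S$ finite flat of degree $5$ with geometric fiber $Z$.

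To establish fineness, let $\alpha_d \in \mathrm{Br}(\M_d(\mathcal{X}/S))$ be the class of the natural $\mathbb{G}_m$-gerbe over the coarse moduli; fineness is equivalent to $\alpha_d = 0$. For $d=3$ the arithmetic argument advertised in the introduction applies: a twisted universal family $\mathcal{E}_3^{\mathrm{tw}}$ exists on $\mathcal{X}\times_S \mathcal{Z}$ as an $\alpha_3$-twisted sheaf, and for large $n$ the pushforward of its Serre twist to $\mathcal{Z}$ is an $\alpha_3$-twisted locally free sheaf of rank $h_3(n)$, so $\mathrm{per}(\alpha_3) \mid \gcd_n h_3(n)$. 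Since $h_3(0)=3$ and $h_3(1)=11$ are coprime, this gcd is $1$ and $\alpha_3=0$. For $d=2$ the same reasoning gives only $\mathrm{per}(\alpha_2) \mid \gcd_n h_2(n) = 5$, which is not conclusive; here I invoke the geometric descent of Section \ref{galois descent}. Since $\M_2(\mathcal{X}/S)\cong S$ is regular, Auslander--Goldman provides an injection $\mathrm{Br}(S) \hookrightarrow \mathrm{Br}(k(\eta))$ at the generic point $\eta$, and on the generic fiber $\mathcal{X}_\eta$ — still a rational Gorenstein quintic del Pezzo surface over $k(\eta)$ — Lemma \ref{gdf} descends $F$, giving $\alpha_2|_\eta = 0$ and therefore $\alpha_2 = 0$.

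With $\alpha_d = 0$, a universal family $\mathcal{E}_d$ on $\mathcal{X}\times_S \M_d(\mathcal{X}/S)$ exists and is flat over $\M_d(\mathcal{X}/S)$ by construction. Its fiberwise restrictions match $F$ (resp.\ $\Q$) by uniqueness in Lemma \ref{semistable}, hence are locally free; flatness of $\mathcal{X} \to S$ plus this fiberwise local freeness upgrades $\mathcal{E}_d$, viewed as a coherent sheaf on $\mathcal{X}$ via the finite morphism $\mathcal{X}\times_S \M_d \to \mathcal{X}$, to a locally free sheaf by standard cohomology and base change. Zero Tor-amplitude over $\M_d(\mathcal{X}/S)$ is then immediate from flatness, and finite Ext-amplitude over $\mathcal{X}$ follows because $R\mathcal{H}om(\mathcal{E}_d, -)$ reduces to tensoring with the dual bundle and preserves bounded cohomological degree. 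The main obstacle throughout is the $d=2$ case above: since $\gcd_n h_2(n)=5 \ne 1$, arithmetic alone cannot trivialize the gerbe, and one must chain Lemma \ref{gdf} (a non-formal Galois-cohomology and Severi--Brauer argument depending on $k$-rationality of $X$) with Auslander--Goldman purity, which is precisely why the reduction to smooth $S$ recorded earlier is essential; a secondary technicality is that the scheme-theoretic identification $\M_2(\mathcal{X}/S)\cong S$ (not merely bijection) relies on the rigidity $\Ext^1(F,F)=0$ of the exceptional object $F$ to rule out infinitesimal obstructions.
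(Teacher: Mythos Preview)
Your proposal is correct and follows essentially the same route as the paper: existence of the coarse moduli via the relative Simpson/Langer machinery, identification of geometric fibers by Theorem~\ref{ms}, the $\gcd$-of-values argument to kill the Brauer obstruction for $d=3$, and for $d=2$ the chain Lemma~\ref{gdf} $+$ injectivity of $\mathrm{Br}(S)\hookrightarrow\mathrm{Br}(k(\eta))$ on the regular base, followed by a fiberwise check for local freeness of $\mathcal{E}_d$. Your explicit use of the rigidity $\Ext^1(F,F)=0$ to pin down $\M_2(\mathcal{X}/S)\cong S$ as schemes is a point the paper leaves implicit, but otherwise the arguments coincide.
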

\begin{proof}
As with the sextic del Pezzo case \cite[Proposition 5.3]{kuzdp6}, there exist the coarse moduli spaces $\M_d(\mathcal{X}/S)\cong\mathcal{Z}_d$ and quasi-universal families $\mathcal{E}_d$ on the fiber products $\mathcal{X} \times_S \mathcal{Z}_d$. The Brauer obstructions $\beta_d\in\mathrm{Br}(\mathcal{Z}_d)$ for the coarse moduli spaces to be fine and the quasiuniversal familes to be universal have orders dividing the greatest common divisor of the values of $h_d(t)$.

Note that by construction, the (coarse) moduli spaces, (quasi-)universal families and Brauer obstructions are compatible with the base change. The claims for the geometric fiber of $g$ and $\mathcal{E}_d |_{\mathcal{X}_{\bar{s}}}$ follow from Theorem \ref{ms}. To prove (i)(ii), it remains to show that the obstructions $\beta_d$ are trivial and $g$ is flat. Clearly, the g.c.d of values of $h_3(d)$ is $1$. Thus, $\M_3(\mathcal{X}/S)$ is a fine moduli space. Since the fibers of $g$ are of the same length ($=5$), i.e. $g_*\O_{\mathcal{Z}}$ is locally free, $\mathcal{Z}$ is Cohen Macauley by \cite[Corollary 18.17]{eca} and $g$ is flat by \cite[Theorem 23.1]{mcrt}.

Let $\eta$ be the generic point of $S$. Because $S$ is regular integral, the restriction map $j: \mathrm{Br}(S)\to\mathrm{Br}(k(\eta))$ is injective. By Lemma \ref{gdf}, the vector bundle $F$ on $\mathcal{X}_{\bar{\eta}}$ descends to $\mathcal{X}_{\eta}$. The Brauer obstruction $j(\beta_2)$ is trivial. Hence, $\beta_2$ is trivial and $\M_2(\mathcal{X}/S)$ is a fine moduli space as well.

The rest follows from \cite[Lemma 5.7]{kuzdp6}. The original argument for the locally freeness of $\mathcal{E}_d$ was not clear to us. We give a revised proof as follows. Let $s\in S, x\in\mathcal{X}_s$ be points. Denote inclusions by $i: x\overset{r}{\hookrightarrow}\mathcal{X}_s\overset{t}{\hookrightarrow}\mathcal{X}$. Because $\M_d(\mathcal{X}/S)$ are flat over $S$, $\mathcal{E}_d$ are flat over $S$ and $t^*\mathcal{E}_d=L_0t^*\mathcal{E}_d$ are corresponding vector bundles in (i)(ii). Then $i^*\mathcal{E}_d=r^*t^*\mathcal{E}_d=L_0r^*L_0t^*\mathcal{E}_d$ implies that $L_0i^*\mathcal{E}_d$ are vector spaces of dimension independent of $x$ and $L_1i^*\mathcal{E}_d=0$. Hence, $\mathcal{E}_d$ are locally free over $\mathcal{X}$.
\end{proof}
Now we are ready to give the proof of the main theorem.
\begin{proof}[Proof of Theorem \ref{main}]
The proof is the same as Theorem $5.2$ in \cite{kuzdp6}. The rough idea is that $\langle D^b(S), D^b(S), D^b(\mathcal{Z})\rangle$ is an $S$-linear full semiorthogonal collection for $D^b(\mathcal{X})$ because it is so for each geometric fiber by Theorem \ref{ssod}. In particular, the embeddings of components are given by Fourier-Mukai functors with kernels $\O_{\mathcal{X}}, \mathcal{E}_2,\mathcal{E}_3$, which have finite Tor-amplitudes over $S, S, \mathcal{Z}$ respectively and finite Ext-amplitudes over $\mathcal{X}$ by Proposition \ref{msf}.  Hence, the finiteness of cohomological amplitudes of the projection functors and the compatibility with the base change follow from Proposition \ref{bc}.
\end{proof}
%--------Homological Projective Duality Approach------------
\subsection{Homological Projective Duality Approach}\label{hpdapproach}
Given a flat family of quintic del Pezzo surfaces $f:\mathcal{X}\to S$ as in Definition \ref{qdpfdef}, $\mathcal{X}$ can be realized as a linear section of a Grassmannian bundle over $S$. We give two such constructions as follows.
%------First Construction------------
\subsubsection{First Construction}
Let us first go back to the case of a single quintic del Pezzo surface. Let $X$ be a quintic del Pezzo surface with rational singularities over an arbitrary field $k$. Sections \ref{global generation},\ref{galois descent} indicate that  on $X$ there is a rank $2$ globally generated vector bundle with determinant $\O_X(-K_X)$ and global sections of dimension $5$ (base changed to the algebraic closure of $k$, this is the vector bundle $F$ defined by (\ref{vb})). Hence, $X=\p^5\cap\Gr(2,5)$ by Lemma \ref{lsgr}. This construction can be generalized to a family because this vector bundle has another unique property: it is the only (semi)stable sheaf on $X$ whose Hilbert polynomial is $h_2(t)$ (Lemma \ref{semistable}(i)). Therefore, the universal family $\mathcal{E}_2$ of the fine moduli space $\mathcal{M}_2(\mathcal{X}/S)$ induces
\begin{equation}\label{first}
\mathcal{X}=\p_S((f_*\omega_{\mathcal{X}/S}^{-1})^*)\times_S \Gr_S(2, (f_*\mathcal{E}_2)^*)\subset \p_S(\bigwedge^2(f_*\mathcal{E}_2)^*)
\end{equation}
where $\omega_{\mathcal{X}/S}^{-1}$ is the relative anticanonical sheaf. Define $\L^\perp$ as the kernel
\[0\to \L^\perp\to \bigwedge^2 f_*\mathcal{E}_2\to f_*\omega_{\mathcal{X}/S}^{-1}\to 0\]
and $\mathcal{Z}'$ as the corresponding dual linear section of $\Gr_S(2, f_*\mathcal{E}_2)$, i.e.
\[\mathcal{Z}'=\p_S(\L^\perp)\times_S \Gr_S(2,f_*\mathcal{E}_2)\subset \p_S(\bigwedge^2f_*\mathcal{E}_2).\]
\begin{thm}\label{main2}
Let $k$ be the base field and $\mathrm{char}(k)\neq 2,3$. Let $f:\mathcal{X}\to S$ be a quintic del Pezzo fibration as in Definition \ref{qdpfdef} and denote by $g':\mathcal{Z}'\to S$ the fibration constructed above. Then we obtain a semiorthogonal decomposition same as Theorem \ref{main}. In particular, $g'=g: \mathcal{Z}'\cong\mathcal{Z}\to S$.  
\end{thm}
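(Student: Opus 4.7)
The plan is to combine the realization (\ref{first}) with Homological Projective Duality (HPD) applied to the Grassmannian bundle $\Gr_S(2,(f_*\mathcal{E}_2)^*)$, using the Lefschetz decomposition of Proposition \ref{lefschetzgr}, whose validity is what forces the hypothesis $\mathrm{char}(k)\neq 2,3$.

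First I would justify (\ref{first}). Proposition \ref{msf}(i) shows that $\mathcal{E}_2$ is locally free on $\mathcal{X}$ and restricts on each geometric fiber to the bundle $F$ of Section \ref{dcqdp}. Since $F$ is globally generated with $\det F=\O_X(-K_X)$ and $h^0(F)=5$ by the results of Section \ref{global generation}, cohomology and base change give that $f_*\mathcal{E}_2$ is locally free of rank $5$, that $f_*\omega_{\mathcal{X}/S}^{-1}$ is locally free of rank $6$, and that the two adjunctions $f^*f_*\mathcal{E}_2\twoheadrightarrow \mathcal{E}_2$ and $f^*f_*\omega_{\mathcal{X}/S}^{-1}\twoheadrightarrow \omega_{\mathcal{X}/S}^{-1}$ are surjective. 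Applying Lemma \ref{lsgr} on each geometric fiber then promotes these data into the closed embedding (\ref{first}) of $\mathcal{X}$ into $\p_S(\bigwedge^2(f_*\mathcal{E}_2)^*)$.

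Next I would invoke relative HPD. Proposition \ref{lefschetzgr} provides a rectangular Lefschetz decomposition of $D^b(\Gr(2,5))$ of length $5$ with blocks $\langle\O,\mathcal{U}^*\rangle$, and Kuznetsov has identified the HPD partner of the Plücker-embedded $\Gr(2,5)$ with the dual Grassmannian $\Gr(2,5^*)$. The expected-dimension condition needed for Kuznetsov's relative HPD theorem for linear sections is verified fiberwise by Lemma \ref{dualsection}. Applied to the codim-$4$ linear subbundle $\p_S((f_*\omega_{\mathcal{X}/S}^{-1})^*)\subset \p_S(\bigwedge^2(f_*\mathcal{E}_2)^*)$, HPD then produces an $S$-linear semiorthogonal decomposition of $D^b(\mathcal{X})$ whose HPD component is $D^b(\mathcal{Z}')$ and whose single surviving Lefschetz block is $\langle \O_{\mathcal{X}}(1),\mathcal{U}^*|_{\mathcal{X}}(1)\rangle\otimes f^*D^b(S)$, namely two copies of $D^b(S)$.

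Finally I would compare with Theorem \ref{main}. After a mutation of the kind carried out in Section \ref{dcqdp}, the HPD decomposition takes the form $\langle D^b(S),D^b(S),D^b(\mathcal{Z}')\rangle$, matching (\ref{sodmain}). The main obstacle is to upgrade the resulting derived equivalence $D^b(\mathcal{Z}')\simeq D^b(\mathcal{Z})$ into an isomorphism $\mathcal{Z}'\cong \mathcal{Z}$ of $S$-schemes. Both $g$ and $g'$ are finite flat of degree $5$ over $S$, hence affine and determined by the $\O_S$-algebras $g_*\O_\mathcal{Z}$ and $g'_*\O_{\mathcal{Z}'}$, so it suffices to match these algebras. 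I would carry this out by identifying the HPD kernel on $\mathcal{Z}'\times_S\mathcal{X}$ with a family of fiberwise-semistable sheaves on fibers of $f$ with Hilbert polynomial $h_3(t)$; by Lemma \ref{semistable}(ii) and the universal property of the fine moduli space $\M_3(\mathcal{X}/S)$ from Proposition \ref{msf}(ii), this yields a classifying $S$-morphism $\mathcal{Z}'\to\mathcal{Z}$ that is an isomorphism because it is so on each geometric fiber by Theorem \ref{ms}(ii).
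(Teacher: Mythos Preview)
Your overall strategy coincides with the paper's: use the realization (\ref{first}), apply relative HPD for the Grassmannian bundle via the rectangular Lefschetz decomposition of Proposition \ref{lefschetzgr}, check expected dimension fiberwise by Lemma \ref{dualsection}, and rotate the resulting decomposition into the form $\langle D^b(S),D^b(S),D^b(\mathcal{Z}')\rangle$. The paper does the rotation by applying the relative Serre functor $-\otimes\omega_{\mathcal{X}/S}[\dim\mathcal{X}-\dim S]$ to the two surviving Lefschetz blocks, which is exactly what your ``mutation'' amounts to.

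The one genuine divergence is in how you pass from the $S$-linear equivalence $D^b(\mathcal{Z}')\simeq D^b(\mathcal{Z})$ to the isomorphism $\mathcal{Z}'\cong\mathcal{Z}$. The paper simply invokes Morita equivalence: since $g$ and $g'$ are finite (hence affine) over $S$, the $S$-linear categories $D^b(\mathcal{Z})$ and $D^b(\mathcal{Z}')$ are the derived categories of the commutative $\O_S$-algebras $g_*\O_{\mathcal{Z}}$ and $g'_*\O_{\mathcal{Z}'}$, and Morita equivalence of commutative algebras forces isomorphism; one then reads off $\mathcal{E}'\cong\mathcal{E}_3$ a posteriori. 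Your route instead proposes to analyze the HPD kernel $\mathcal{E}'$ directly, show it is a flat family over $\mathcal{Z}'$ of semistable sheaves with Hilbert polynomial $h_3(t)$, and invoke the universal property of $\M_3(\mathcal{X}/S)$. This is a legitimate alternative and has the virtue of giving the identification $\mathcal{E}'\cong\mathcal{E}_3$ at the same time, but it is more laborious: you would need to unwind the explicit form of the HPD kernel for $\Gr(2,5)$ and check flatness and fiberwise semistability, none of which is free. The paper's Morita shortcut avoids this entirely.
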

\begin{proof}
By construction and Lemma \ref{dualsection}, each fiber of $g':\mathcal{Z}'\to S$ has length $5$ and thus $g'$ is flat and finite of degree $5$. We will use the special version of Homological Projective Duality introduced in \cite{kuzhs}, in particular, the relative version of Example 6.1 in \textit{loc.cit.}. In order to do so, we need a Lefschetz type semiorthogonal decomposition, which is given by Proposition \ref{lefschetzgr} and a relative version of this decomposition also holds by arguments in \cite[\S3]{orlproj}. This implies that we have an $S$-linear semiorthogonal decomposition
\begin{equation}\label{sodhpd}
\begin{array}{rl}
D^b(\mathcal{X}) &= \langle D^b(\mathcal{Z}'), f^*D^b(S)\otimes\omega_{\mathcal{X}/S}^{-1}, f^*D^b(S)\otimes\mathcal{E}_2\otimes \omega_{\mathcal{X}/S}^{-1}\rangle\\
&= \langle f^*D^b(S)\otimes\O_{\mathcal{X}}, f^*D^b(S)\otimes\mathcal{E}_2, D^b(\mathcal{Z}')\rangle\\
&= \langle D^b(S), D^b(S), D^b(\mathcal{Z}')\rangle.
\end{array}
\end{equation}
The second equality is obtained by applying the Serre functor $-\otimes \omega_{\mathcal{X}/S}[\dim\mathcal{X}-\dim S]$ to the last two components. The embeddings of the components are given by Fourier-Mukai functors with kernels $\O_{\mathcal{X}}$, $\mathcal{E}_2$ and $\mathcal{E}'$ respectively. Moreover, $\mathcal{E}'$ has finite Tor-ampiltude over $\mathcal{Z}'$ and finite Ext-amplitude over $\mathcal{X}$. Therefore, the finiteness of cohomological amplitudes and compatibility of the base change follow by the same reason as before.

Finally, the decomposition (\ref{sodhpd}) is exactly the same as the one obtained by the moduli space approach. Comparing these two decompositions, we notice that there is an $S$-linear equivalence $D^b(\mathcal{Z}')\simeq {}^\perp\langle f^*D^b(S), f^*D^b(S)\otimes\mathcal{E}_2\rangle\simeq D^b(\mathcal{Z})$. By Morita equivalence, one gets $\mathcal{Z}'\cong\mathcal{Z}$ over $S$ and thus the kernels for the embedding functors are also isomorphic, i.e. $\mathcal{E}'\cong\mathcal{E}_3$. 
\end{proof}
%--------Second Construction------------
\subsubsection{Second Construction}\label{construction2}
Let $X$ be a quintic del Pezzo surface with rational Gorenstein singularities over an arbitrary field $k$. Let $X\to\p^5$ be the anticanonical embedding and $I$ be the ideal of $X$ in $\p^5$. One can compute that $I(2)$ is globally generated with $h^0(\p^5, I(2))=5$. Therefore, we have a rank $3$ vector bundle $N^*_{X/\p^5}(2)=I/I^2(2)$ with determinant $\O_X(-K_X)$ and a surjection $\O_X^{\oplus 5}\to N_{X/\p^5}^*(2)$. Define $F'$ as the cokernel
\begin{equation}\label{normalseq}
0\to N_{X/\p^5}(-2)\to \O_X^{\oplus 5}\to F'\to 0.
\end{equation}
Then $F'$ is a rank 2 vector bundle with determinant $\O_X(-K_X)$. Again, we obtain $X=\p^5\cap \Gr(2,5)$ by $F'$ and this construction can be generalized to a family as well. For a family $f:\mathcal{X}\to S$, let $\mathcal{I}$ be the ideal sheaf of the anticanonical embedding $\mathcal{X}\hookrightarrow \p_S((f_*\omega_{\mathcal{X}/S}^{-1})^*)$ over S and $N_{\mathcal{X}/\p_S}$ be the normal bundle. Let $m: \p_S((f_*\omega_{\mathcal{X}/S}^{-1})^*)\to S$ be the projection. Then we have the short exact sequence
\begin{equation}\label{normalseqfamily}
0\to N_{\mathcal{X}/\p_S}(-2)\to f^*(m_*\mathcal{I}(2))^*\to \mathcal{F}'\to 0
\end{equation}
where $\O(1)=\omega_{\mathcal{X}/S}^{-1}$ and $\mathcal{F}'$ is the rank $2$ vector bundle defined as the cokernel. Similarly, we have the linear section structure for $\mathcal{X}$ and the dual linear section $g'':\mathcal{Z}''\to S:$
\begin{equation}\label{second}
\mathcal{X}=\p_S((f_*\omega_{\mathcal{X}/S}^{-1})^*)\times_S \Gr_S(2, m_*\mathcal{I}(2))\subset \p_S(\bigwedge^2 m_*\mathcal{I}(2)),
\end{equation}
\[0\to \L'^\perp \to\bigwedge^2 (m_*\mathcal{I}(2))^*\to f_*\omega_{\mathcal{X}/S}^{-1}\to 0, \]
\[\mathcal{Z}''=\p_S(\L'^\perp)\times_S \Gr(2, (m_*\mathcal{I}(2))^*)\subset \p_S(\bigwedge^2 (m_*\mathcal{I}(2))^*).\]
\begin{thm}\label{main3}
Let $k$ be the base field and $\mathrm{char}(k)\neq 2,3$. Let $f:\mathcal{X}\to S$ be a quintic del Pezzo fibration as in Definition \ref{qdpfdef} and denote by $g'':\mathcal{Z}''\to S$ the fibration constructed above. Then there is an $S$-linear semiorthogonal decomposition compatible with the base change
\begin{equation}\label{sodhpd2}
D^b(\mathcal{X})=\langle D^b(S), D^b(S), D^b(\mathcal{Z}'')\rangle
\end{equation}
with embeddings of the components given by Fourier Mukai functors with kernels $\O_{\mathcal{X}}, \mathcal{F}', \mathcal{E}''$. 
\end{thm}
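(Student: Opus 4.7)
The plan is to run exactly the same HPD argument as in the proof of Theorem \ref{main2}, but with the pair $(f_{*}\mathcal{E}_{2},\mathcal{E}_{2})$ replaced by $(m_{*}\mathcal{I}(2),\mathcal{F}')$. The presentation (\ref{second}) realizes $\mathcal{X}$ as a linear bundle section of the Grassmannian bundle $\Gr_{S}(2,m_{*}\mathcal{I}(2))$, and $\mathcal{Z}''$ is by definition the corresponding dual linear section; this is precisely the situation to which the relative version of Example 6.1 in \cite{kuzhs} applies.

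First I would verify the fiberwise picture. For each geometric fiber, the anticanonical embedding of a rational Gorenstein quintic del Pezzo surface into $\p^{5}$ has $h^{0}(\p^{5},I(2))=5$ and fits into the sequence (\ref{normalseq}) with $F'$ a rank $2$ globally generated vector bundle of determinant $\O(-K)$; Lemma \ref{lsgr} then confirms the linear section description $X=\p^{5}\cap\Gr(2,5)$. Standard cohomology and base change for $\mathcal{I}(2)$ (which reduces to the well-known cohomology of $\O_{\mathcal{X}}(2)$ via the short exact sequence $0\to\mathcal{I}(2)\to\O_{\p_{S}}(2)\to\O_{\mathcal{X}}(2)\to 0$) ensures that $m_{*}\mathcal{I}(2)$ is locally free of rank $5$ on $S$ and that its formation commutes with base change, so that $\mathcal{F}'$ is indeed a rank $2$ locally free sheaf on $\mathcal{X}$ whose fiber is the corresponding bundle $F'$. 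Next, Lemma \ref{dualsection} applied to each geometric fiber, combined with the fact that $\Gr(2,5)\subset\p^{9}$ has degree $5$ in the Plücker embedding, shows that every fiber of $g''$ has length $5$; then $g''_{*}\O_{\mathcal{Z}''}$ is locally free of rank $5$ and $g''$ is flat and finite of degree $5$ by \cite[Corollary 18.17]{eca} and \cite[Theorem 23.1]{mcrt}, just as in Proposition \ref{msf}.

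With these inputs in place, the relative Lefschetz decomposition of $\Gr_{S}(2,m_{*}\mathcal{I}(2))$ furnished by Proposition \ref{lefschetzgr} (globalised via \cite[\S3]{orlproj}) together with HPD produces
\[D^{b}(\mathcal{X})=\langle D^{b}(\mathcal{Z}''),\,f^{*}D^{b}(S)\otimes\omega_{\mathcal{X}/S}^{-1},\,f^{*}D^{b}(S)\otimes\mathcal{F}'\otimes\omega_{\mathcal{X}/S}^{-1}\rangle.\]
Applying the Serre functor $-\otimes\omega_{\mathcal{X}/S}[\dim\mathcal{X}-\dim S]$ to the last two components rotates them to the front and yields (\ref{sodhpd2}) with kernels $\O_{\mathcal{X}}$, $\mathcal{F}'$, and $\mathcal{E}''$. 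The kernels $\O_{\mathcal{X}}$ and $\mathcal{F}'$ are locally free, and $\mathcal{E}''$ is produced by the HPD machinery with finite Tor-amplitude over $\mathcal{Z}''$ and finite Ext-amplitude over $\mathcal{X}$; the finiteness of the cohomological amplitudes of the projection functors and the compatibility with arbitrary base change then follow from Proposition \ref{bc} exactly as in the proof of Theorem \ref{main2}.

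The main obstacle I anticipate is the bookkeeping needed to verify that $m_{*}\mathcal{I}(2)$ is a locally free sheaf of the expected rank whose formation is compatible with base change, since unlike the moduli-theoretic $\mathcal{E}_{2}$ of the first construction this is not a direct input. Once that technical point is settled, the rest of the proof is formally identical to that of Theorem \ref{main2}.
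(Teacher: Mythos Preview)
Your proposal is correct and is precisely the argument the paper intends: the paper states Theorem \ref{main3} without a separate proof, implicitly relying on the same HPD machinery as in Theorem \ref{main2} with $(m_{*}\mathcal{I}(2),\mathcal{F}')$ in place of $(f_{*}\mathcal{E}_{2},\mathcal{E}_{2})$. Your added remarks on the local freeness and base-change compatibility of $m_{*}\mathcal{I}(2)$, and the flatness of $g''$ via Lemma \ref{dualsection}, supply exactly the verifications the paper leaves to the reader.
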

%----------The Two Constructions Coincide in Characteristic 0-------------
\subsubsection{The Two Constructions Coincide in Characteristic 0}\label{coincide}
In this section, we assume $\mathrm{char}(k)=0$.

Previously we described two constructions for realizing the flat family of quintic del Pezzo surfaces $f: \mathcal{X}\to S$ as a linear section of a Grassmannian bundle over $S$. The first one uses the universal family $\mathcal{E}_2$ of the fine moduli space $\mathcal{M}_2(\mathcal{X}/S)$ and the second one uses the rank $2$ vector bundle $\mathcal{F}'$ defined by (\ref{normalseqfamily}). We prove in this section that they produce the same construction. More precisely, we will prove that 

(A) $\mathcal{F}'$ is the universal family of $\mathcal{M}_2(\mathcal{X}/S)$. Equivalently, $F'$ defined by (\ref{normalseq}) is a stable sheaf with Hilbert polynomial $h_2(t)$ (By Lemma \ref{semistable}(i), $F'_{\bar{k}}\cong F$ where $F$ is defined by (\ref{vb}).).

(B) In (\ref{normalseqfamily}), $(m_*\mathcal{I}(2))^*\cong f_*\mathcal{F}'$ and the map $ f^*(m_*\mathcal{I}(2))^*\to \mathcal{F}'$ is the evaluation map. Equivalently, the map $\O_X^{\oplus 5}\to F'$ in (\ref{normalseq}) is the evaluation map $H^0(X, F')\otimes_k \O_X\to F'$.

First note that as a linear section of $\Gr(2,5)$, a quintic del Pezzo surface $X$ with rational Gorenstein singularities is a projective l.c.i. scheme. Therefore, Hirzebruch-Riemann-Roch still holds for $X$ \cite[Corollary 18.3.1]{fint}. One computes the Hilbert polynomial with respect to $-K_X$ for a vector bundle $E$ on $X$ as follows:
\begin{equation}\label{hrr}
\begin{array}{rl}
h_E(t) &:=\chi(E(-tK_X))\\
&=\frac{K_X^2}{2}\mathrm{rk}(E)t^2+(\frac{K_X^2}{2}\mathrm{rk}(E)-c_1(E).K_X)t\\
&+\int_X\frac{c_1(E)^2-2c_2(E)-c_1(E).K_X}{2}+\int_X\frac{K_X^2+c_2(T_X)}{12}\mathrm{rk}(E)\\
\end{array}
\end{equation}
where $T_X=T_{\p^5}|_X-N_{X/\p^5}$ is the virtual tangent bundle. In particular, $1=\chi(\O_X)=h_{\O_X}(0)=\int_X \frac{K_X^2+c_2(T_X)}{12}$ implies that $\int_X c_2(T_X)=7$. 
\begin{lemma}
$h_{F'}(t)=h_F(t)=h_2(t)=5t^2+10t+5$.
\end{lemma}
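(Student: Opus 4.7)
The plan is to read off $h_{F'}(t)$ from the Hirzebruch--Riemann--Roch formula (\ref{hrr}). We already know $\mathrm{rk}(F')=5-3=2$ and $c_1(F')=-K_X$ from the setup of (\ref{normalseq}), so only $\int_X c_2(F')$ remains mysterious. Plugging $\mathrm{rk}(F')=2$, $c_1(F')=-K_X$, $K_X^2=5$ and $\int_X c_2(T_X)=7$ into (\ref{hrr}), the $t^2$- and $t$-coefficients come out to $5$ and $10$ with no further work, and the constant term simplifies to $7-\int_X c_2(F')$. So the lemma reduces to the single Chern number computation $\int_X c_2(F')=2$.

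For this I would use the multiplicativity of total Chern classes in (\ref{normalseq}): since $\mathcal{O}_X^{\oplus 5}$ has trivial Chern classes,
\[
c(F')\cdot c(N_{X/\mathbb{P}^5}(-2))=1,
\]
which gives $c_2(F')=c_1(N_{X/\mathbb{P}^5}(-2))^2-c_2(N_{X/\mathbb{P}^5}(-2))=K_X^2-c_2(N_{X/\mathbb{P}^5}(-2))$. Hence it suffices to establish $\int_X c_2(N_{X/\mathbb{P}^5}(-2))=3$.

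To compute the Chern classes of $N:=N_{X/\mathbb{P}^5}$, I would combine the conormal sequence $0\to T_X\to T_{\mathbb{P}^5}|_X\to N\to 0$ with the Euler sequence on $\mathbb{P}^5$ pulled back to $X$. The latter yields $c(T_{\mathbb{P}^5}|_X)=(1-K_X)^6$, truncated in codimension $\le 2$, so $c_1(T_{\mathbb{P}^5}|_X)=-6K_X$ and $\int_X c_2(T_{\mathbb{P}^5}|_X)=15K_X^2=75$. Multiplicativity, together with $c_1(T_X)=-K_X$ and $\int_X c_2(T_X)=7$, produces $c_1(N)=-5K_X$ and $\int_X c_2(N)=10K_X^2-\int_X c_2(T_X)=43$. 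Finally, the twist formula $c_2(E\otimes L)=c_2(E)+2c_1(E)c_1(L)+3c_1(L)^2$ for a rank-$3$ bundle with $L=\mathcal{O}_X(2K_X)$ gives $c_1(N(-2))=K_X$ and $\int_X c_2(N(-2))=43-8K_X^2=3$, whence $\int_X c_2(F')=5-3=2$ and the constant term of $h_{F'}$ becomes $5$, matching $h_2(t)=5(t+1)^2=h_F(t)$.

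The only potential obstacle is that $X$ is allowed to be singular, so strictly speaking $T_X$ must be understood as the virtual tangent bundle $[T_{\mathbb{P}^5}|_X]-[N_{X/\mathbb{P}^5}]$; but this is exactly the framework used just before the lemma (where HRR is invoked for the l.c.i.\ scheme $X$ and $\int_X c_2(T_X)=7$ is derived from $\chi(\mathcal{O}_X)=1$), so the Chern class manipulations above take place in the Chow ring of the l.c.i.\ scheme $X$ without issue.
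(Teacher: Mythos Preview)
Your proof is correct and follows essentially the same route as the paper: compute $\int_X c_2(N_{X/\p^5})=43$ via the virtual tangent bundle identity $c(T_{\p^5}|_X)=c(T_X)c(N_{X/\p^5})$, then use Whitney on (\ref{normalseq}) to extract $\int_X c_2(F')=2$, and finally plug into (\ref{hrr}). The only cosmetic difference is that you pass through the intermediate value $\int_X c_2(N_{X/\p^5}(-2))=3$ via the twist formula, while the paper leaves that step implicit and quotes $\int_X c_2(F')=2$ directly.
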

\begin{proof}
$T_X=T_{\p^5}|_X-N_{X/\p^5}$ and the sequence (\ref{normalseq}) imply that $c_2(N_{X/\p^5})+c_2(T_X)+c_1(N_{X/\p^5})c_1(T_X)=c_2(T_{\p^5}|_X)$ and $c_2(N_{X/\p^5}(-2))+c_2(F')+c_1(N_{X/\p^5}(-2))c_1(F')=0$. Thus, $\int_X c_2(N_{X/\p^5})=43$ and $\int_X c_2(F')=2$. By the equation (\ref{hrr}), we have $h_{F'}(t)=5t^2+10t+5$.
\end{proof}

\cite[Proposition 4.2(i)]{hwgdp} indicates that there is a smooth elliptic curve $C\in |-K_X|$ on $X$ which does not meet the singular locus of $X$.

\begin{prop}
Let $M$ be a rank $2$ vector bundle on $X$ with determinant $\O_X(-K_X)$. If $M|_C$ is indecomposable on $C$, then $M$ is Gieseker stable with respect to $-K_X$.
\end{prop}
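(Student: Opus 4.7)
The plan is to translate Gieseker stability of $M$ on $X$ into semistability of the restriction $M|_C$ to the smooth elliptic curve $C$, and then invoke Atiyah's theorem that every indecomposable vector bundle on an elliptic curve is semistable. Assuming a rank $1$ subsheaf $N\subset M$ destabilises $M$, I will restrict to $C$ and produce a line subsheaf of $M|_C$ of too large a degree, contradicting the semistability bound.

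First I will apply Hirzebruch--Riemann--Roch on the l.c.i.\ variety $X$ (valid since $X$ is a linear section of $\Gr(2,5)$, hence l.c.i.) to compute $h_M(t) = 5t^2 + 10t + 5$, and observe that $M|_C$ has rank $2$ and degree $(-K_X)\cdot C = K_X^2 = 5$, hence slope $5/2$. Indecomposability of $M|_C$ together with Atiyah then forces semistability, so every line subbundle of $M|_C$ has integer degree at most $2$. This is the easy half.

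For the hard half, I will suppose $M$ is not Gieseker stable and choose a saturated rank $1$ subsheaf $N\subset M$ with reduced Hilbert polynomial $\geq \tfrac{1}{2}h_M$ asymptotically. Writing $h_N(t) = \tfrac{5}{2}t^2 + bt + c$, this forces $b \geq 5$. Two ingredients then yield the contradiction. Because $X$ is Gorenstein the divisor $C$ is Cartier, and because $N$ and $Q := M/N$ are torsion-free (the latter by saturation), tensoring the ideal sequence $0\to \O_X(-C)\to \O_X\to \O_C\to 0$ with either $N$ or $Q$ stays exact. This gives the restriction sequence $0\to N|_C\to M|_C\to Q|_C\to 0$ together with $\chi(N|_C) = h_N(0) - h_N(-1) = b - \tfrac{5}{2}$. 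Since $C$ misses $\mathrm{Sing}(X)$ and $N$ is reflexive of rank $1$ (hence locally free at every smooth point of $X$), $N$ is locally free in a neighbourhood of $C$, so $N|_C$ is a genuine line bundle on the elliptic curve $C$ of degree $b - \tfrac{5}{2} \geq \tfrac{5}{2}$, and therefore of integer degree $\geq 3$. The saturation in $M|_C$ of the image of $N|_C$ is then a line subbundle of degree $\geq 3$, contradicting the semistability bound of $2$.

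The main obstacle I anticipate is the singularity book-keeping: one must know that $N|_C$ is genuinely a line bundle (so its degree is well-defined and matches the HRR computation) and that the short exact sequence restricts cleanly to $C$. Both steps hinge on the hypothesis that $C$ avoids $\mathrm{Sing}(X)$, together with the Gorenstein property of $X$ guaranteeing that $C$ is Cartier. Once these technical points are handled, the proof is just the combination of HRR on $X$, Atiyah's theorem on $C$, and the elementary slope bound for line subbundles of a semistable rank $2$ bundle of odd degree.
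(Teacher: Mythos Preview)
Your argument is correct and follows the same overall strategy as the paper: restrict a putative destabilising saturated rank~$1$ subsheaf to $C$, compute its degree via the difference $h_N(0)-h_N(-1)$, and contradict a slope bound on the elliptic curve coming from Atiyah's work. The technical points (reflexivity of $N$, local freeness of $N$ near $C$ since $C$ avoids the singular locus, exactness of the restricted sequence because $G=M/N$ is torsion-free and $C$ is Cartier) are handled the same way in both.

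The one substantive difference is in which result of Atiyah you invoke. You use the general fact that an indecomposable bundle on an elliptic curve is automatically semistable, so any line subbundle of $M|_C$ has degree at most $\lfloor 5/2\rfloor=2$. The paper instead first proves directly that $F|_C$ is \emph{stable} (by analysing the restriction of the defining extension of $\wt F$), and then uses Atiyah's theorem that an indecomposable bundle of coprime rank and degree on an elliptic curve is determined by its determinant, to conclude $M|_C\cong F|_C$ and hence that $M|_C$ is stable. Your route is shorter and avoids any computation with $F$; the paper's route has the side benefit of explicitly identifying $M|_C$ and of giving an independent proof that $F$ itself is stable (as noted in the remark following the proposition).

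One small correction: you write $h_M(t)=5t^2+10t+5$, but the constant term depends on $c_2(M)$, which is not fixed by the hypotheses; only the quadratic and linear coefficients are determined (this is exactly what the paper records as $h_M(t)=5t^2+10t+\text{const}$). This does not affect your argument, since only the linear coefficient enters the inequality $b\geq 5$.
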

\begin{proof}
It suffices to prove the lemma when the base field $k$ is algebraically closed. Let $\pi: \wt{X}\to X$ be the minimal resolution. Then $C$ can be embedded into $\wt{X}$ and $C\in |-K_{\wt{X}}|$. Recall that $\pi^*F=\wt{F}$. Restricting the sequence (\ref{uex}) to $C$, one gets
\begin{equation}\label{uexc}
0\to \O_C(-K_{\wt{X}}-h)\to F|_C\to \O_C(h)\to 0.
\end{equation}
The short exact sequence
\[0\to \O_{\wt{X}}(-2h)\to \O_{\wt{X}}(-K_{\wt{X}}-2h)\to \O_C(-K_{\wt{X}}-2h)\to 0\]
induces the exact sequence $H^1(\wt{X}, \O_{\wt{X}}(-2h))=0\to H^1(\wt{X}, \O_{\wt{X}}(-K_{\wt{X}}-2h))=k\to H^1(C, \O_C(-K_{\wt{X}}-2h))=k$. Hence, the sequence (\ref{uexc}) is a nontrivial extension.

We claim that $F|_C$ is a stable rank $2$ bundle on $C$, that is, for any line bundle $L$ on $C$, $\deg(L)<\deg(F|_C)/\mathrm{rk}(F|_C)=5/2$. This is true because either $L$ is a subbundle of $\O_C(-K_{\wt{X}}-h)$ or a nontrivial subbundle of $\O_C(h)$ (since the extension is nontrivial). Thus, $\deg(L)\leqslant \deg(\O_C(-K_{\wt{X}}-h))=2$ or $\deg(L)<\deg{\O_C(h)}=3$. As a stable bundle, $F|_C$ is indecomposable on $C$.

By assumption, $h_M(t)=5t^2+10t+$ constant. Let $E$ be a rank $1$ saturated subsheaf of $M$, i.e. the quotient $G:=M/E$ is torsion free. By \cite[Proposition 1.1]{hstable}, $E$ is reflexive and thus $E|_C$ is a line bundle. Consider the short exact sequence
\[0\to E(K_X)\to E\to E|_C\to 0.\]
The first map is injective because $E$ is torsion free. Let $h_E(t)=\frac{5}{2}t^2+bt+c$. Then $\chi(E\otimes \O_C(-tK_X))=h_E(t)-h_E(t-1)=5t+b-\frac{5}{2}$. By Riemann-Roch, it is equal to $5t+\deg(E|_C)$. Thus, $b=\deg(E|_C)+\frac{5}{2}$. If $b<\frac{10}{2}=5$, then $h_E(t)<\frac{1}{2}h_{M}(t)$ and $M$ is stable.

Note that $\ker(E|_C\to M|_C)\cong \ker(G(K_X)\to G)\cong 0$ because $G$ is torsion free. We have a short exact sequence
\[0\to E|_C\to M|_C\to G|_C\to 0.\]
By \cite[Theorem 7]{ativbell}, an indecomposable vector bundle on a smooth elliptic curve is determined by its determinant when the rank and degree of the bundle are coprime. Since $\mathrm{rk}(M|_C)=2$ and $\deg(M|_C)=\deg(\O_C(-K_X))=5$ and the same is true for $F|_C$, we have $M|_C\cong F|_C$. In particular, $M|_C$ is stable on $C$ and thus $\deg(E|_C)<\frac{5}{2}$. The result follows.
\end{proof}
\begin{rem}
The proof works for any characteristic and it gives another proof for $F$ to be stable.
\end{rem}
\begin{lemma}\label{indec}
$F'|_C$ and $N_{X/\p^5}|_C$ are indecomposable on $C$.
\end{lemma}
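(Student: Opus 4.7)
The plan is to exploit Atiyah's classification of vector bundles on an elliptic curve: since $\gcd(2,5) = \gcd(3,25) = 1$, any semistable bundle of rank $2$ and degree $5$ (resp.\ rank $3$ and degree $25$) on $C$ is automatically stable, in particular indecomposable, and is uniquely determined by its determinant. Thus it suffices to prove that $F'|_C$ and $N_{X/\p^5}|_C$ are semistable.

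I would first treat $N_{X/\p^5}|_C$ by identifying it with the normal bundle $N_{C/\p^4}$ of the elliptic normal quintic. By \cite[Proposition 4.2(i)]{hwgdp}, $C$ can be chosen to avoid the singular locus of $X$; since $h^0(\O_C(-K_X|_C)) = 5$ by Riemann--Roch, $C$ spans a hyperplane $\p^4 \subset \p^5$ and $C = X \cap \p^4$. From the exact sequence $0 \to N_{C/X} \to N_{C/\p^5} \to N_{X/\p^5}|_C \to 0$ together with the hyperplane splitting $N_{C/\p^5} \cong N_{C/\p^4} \oplus \O_C(1)$, and using $N_{C/X} \cong \O_X(C)|_C \cong \O_C(1)$, the projection $N_{C/X} \to \O_C(1)$ is a map of degree-$5$ line bundles on an integral curve, nonzero because $T_p X \not\subset T_p \p^4$ for some $p \in C$ (since $X \not\subset \p^4$). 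Hence it is an isomorphism, which forces $N_{C/\p^4} \hookrightarrow N_{C/\p^5} \twoheadrightarrow N_{X/\p^5}|_C$ to be an isomorphism. The required semistability of $N_{C/\p^4}$ is a classical result for the normal bundle of an elliptic normal curve, provable via the Euler sequence on $\p^4$ combined with standard cohomological vanishings.

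For $F'|_C$, restrict the defining sequence (\ref{normalseq}) to obtain
\[0 \to N_{X/\p^5}(-2)|_C \to \O_C^{\oplus 5} \to F'|_C \to 0.\]
By the previous step, $N_{X/\p^5}(-2)|_C \cong N_{C/\p^4}(-2)$ is stable of slope $25/3 - 10 = -5/3 < 0$, whence $h^0(N_{X/\p^5}(-2)|_C) = 0$. Assume for contradiction that $F'|_C$ is decomposable; since it is globally generated and has degree $5$, the only possible splittings as $L_1 \oplus L_2$ with $L_i$ globally generated line bundles are $(\deg L_1,\deg L_2) \in \{(0,5),(2,3)\}$. I would rule out the $(0,5)$ case by computing $h^0(F'^*|_C) = 0$ from the dualized sequence $0 \to F'^*|_C \to \O_C^{\oplus 5} \to N_{C/\p^4}^*(2) \to 0$, using that $N_{C/\p^4}^*(2)$ is stable with $h^1 = 0$. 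For the $(2,3)$ case, since $h^0, h^1$ alone do not distinguish this from the indecomposable bundle, I would either identify $F'|_C$ as the pullback of the tautological rank-$2$ quotient bundle on $\Gr(2,5)$ via the classical embedding of the elliptic normal quintic $C \cong \Gr(2,5) \cap \p^4$ and invoke the known stability of this restriction, or argue via $\mathcal{E}nd(F'|_C)$ using the restriction sequence on $X$.

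The main obstacle is the $(2,3)$ case above: cohomological dimensions cannot separate this splitting from the unique Atiyah bundle, so the decisive step is the geometric input that the map $C \to \Gr(2,5)$ induced by $F'|_C$ is linearly normal as a linear section, from which stability of the restriction of the tautological bundle follows.
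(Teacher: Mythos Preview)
Your reduction to semistability via Atiyah's classification is sound in principle, and the identification $N_{X/\p^5}|_C\cong N_{C/\p^4}$ through the transversality of $X\cap\p^4=C$ is fine. But the argument for $F'|_C$ has two gaps. In the $(0,5)$ case, the vanishing $h^0(F'^*|_C)=0$ does not follow from the dualised sequence together with $h^1(N_{C/\p^4}^*(2))=0$ alone: the long exact sequence only gives $h^0(F'^*|_C)=\dim\ker\bigl(H^0(\O_C^{\oplus5})\to H^0(N_{C/\p^4}^*(2))\bigr)$, and since both spaces are $5$-dimensional you still have to prove this map is injective, which is not automatic. More seriously, you leave the $(2,3)$ case open: you propose to invoke ``known stability'' of the restriction of the tautological bundle to $C=\Gr(2,5)\cap\p^4$, but that is precisely the assertion in question and you give no mechanism for establishing it.

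The paper bypasses all of this by computing $\mathrm{End}(F'|_C)$ directly. As you yourself suggest at the end, the surjection $\O_C^{\oplus5}\to F'|_C$ realises $C$ as $\Gr(2,5)\cap\p^4$ with $F'|_C\cong\R^*|_C$; but then, rather than appealing to an external stability result, the paper uses the Koszul resolution of $\O_C$ on $\Gr(2,5)$ together with Bott's vanishing $H^j(\Gr(2,5),\Sym^2\R^*(-i))=0$ for all $j$ and $1\le i\le 6$ to obtain $H^0(C,\R\otimes\R^*|_C)\cong H^0(C,\O_C)=k$. Thus $F'|_C$ is simple, hence indecomposable, with no case analysis and no reliance on results about normal bundles of elliptic normal curves. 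The same Koszul-plus-Bott computation handles $N_{X/\p^5}|_C$.
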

\begin{proof} 
The surjection $\O_C^{\oplus 5}\to F'|_C$ realizes $C$ as $\p^4\cap\Gr(2, 5)$. Then on $\Gr(2,5)$, we have an exact Kozsul complex
\[0\to \O_{\Gr}(-5)\to\dots\to\O_{\Gr}(-1)^{\oplus 5}\to \O_{\Gr}\to\O_C\to 0. \]

Let $\R^*$ be the dual of the universal subbundle of $\Gr(2,5)$. Then $F'|_C\cong \R^*|_C$. Let $E:=\R\otimes\R^*$. Since $\mathrm{char}(k)=0$, $E\cong\Sym^2(\R^*)(-1)\oplus \O_{\Gr}$. By Bott's Theorem \cite[Corollary 4.1.9]{weygr}, $H^j(\Gr(2, 5), \Sym^2(\R^*)(-i))=0$ for all $j$ and $1\leqslant i\leqslant 6$. Then the Koszul complex implies that $\mathrm{End}(F'|_C)\cong H^0(C, E|_C)\cong H^0(C, \O_C)=k$. Hence, $F'|_C$ is indecomposable. A similar computation proves the claim for $N_{X/\p^5}|_C$.
\end{proof}
\begin{cor}
$F'_{\bar{k}}\cong F$. 
\end{cor}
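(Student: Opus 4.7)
The plan is to combine the preceding lemma and proposition to deduce stability of $F'$, then invoke the uniqueness statement of Lemma \ref{semistable}(i) after base change to $\bar{k}$.

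First I would apply the proposition immediately preceding Lemma \ref{indec} with $M = F'$. The hypotheses are in place: $F'$ is a rank $2$ vector bundle on $X$ with $\det(F') = \O_X(-K_X)$ (this was verified right after the definition of $F'$ via (\ref{normalseq})), and by Lemma \ref{indec} the restriction $F'|_C$ is indecomposable on the smooth elliptic curve $C \in |-K_X|$. Consequently $F'$ is Gieseker stable with respect to $-K_X$, and its Hilbert polynomial was computed to be $h_{F'}(t) = h_2(t)$.

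Next I would pass to the algebraic closure. The normal bundle sequence (\ref{normalseq}) defining $F'$ is compatible with base change, so $F'_{\bar{k}}$ fits into the same type of sequence for $X_{\bar{k}}$, and in particular $\det(F'_{\bar{k}}) = \O_{X_{\bar{k}}}(-K_{X_{\bar{k}}})$. Moreover, $C_{\bar{k}}$ is still a smooth elliptic curve anticanonical section of $X_{\bar{k}}$ missing the singular locus, and indecomposability of $F'|_C$ yields indecomposability of $F'_{\bar{k}}|_{C_{\bar{k}}}$ (the proof of Lemma \ref{indec} via the Koszul resolution and Bott vanishing goes through verbatim over $\bar{k}$, since $\mathrm{char}(\bar{k}) = \mathrm{char}(k) = 0$). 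Reapplying the proposition over $\bar{k}$ shows that $F'_{\bar{k}}$ is Gieseker stable with Hilbert polynomial $h_2(t)$.

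Finally, Lemma \ref{semistable}(i) states that any semistable sheaf on $X_{\bar{k}}$ with Hilbert polynomial $h_2(t)$ is isomorphic to $F$. Applying this to $F'_{\bar{k}}$ yields the claimed isomorphism $F'_{\bar{k}} \cong F$. There is no real obstacle here; the corollary is essentially the concatenation of the stability criterion (the preceding proposition), the indecomposability computation (Lemma \ref{indec}), and the uniqueness of semistable sheaves with the given Hilbert polynomial (Lemma \ref{semistable}(i)). The only minor subtlety is checking that indecomposability of $F'|_C$ is preserved under base change to $\bar{k}$, which follows immediately from the Bott-theoretic computation $\mathrm{End}(F'|_C) = k$ in Lemma \ref{indec}, since $\mathrm{End}(F'_{\bar{k}}|_{C_{\bar{k}}}) = \mathrm{End}(F'|_C) \otimes_k \bar{k} = \bar{k}$.
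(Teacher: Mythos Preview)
Your proof is correct and follows the same line as the paper's: combine the stability criterion (the proposition), the indecomposability of $F'|_C$ (Lemma \ref{indec}), and the uniqueness statement Lemma \ref{semistable}(i). The paper's one-line proof simply says ``Results above show that $F'$ is a stable sheaf with Hilbert polynomial $h_2(t)$ and the claim follows from Lemma \ref{semistable}(i),'' implicitly using that (semi)stability is preserved under field extension; your second paragraph instead re-runs the stability argument over $\bar{k}$, which is unnecessary but harmless---once $F'$ is stable over $k$, the base change $F'_{\bar{k}}$ is automatically semistable, and that is all Lemma \ref{semistable}(i) requires.
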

\begin{proof}
Results above show that $F'$ is a stable sheaf with Hilbert polynomial $h_2(t)$ and the claim follows from Lemma \ref{semistable}(i).
\end{proof}
This concludes part (A) and now we will prove part (B). 
\begin{lemma}
The map $\O_X^{\oplus 5}\to F'$ in (\ref{normalseq}) is the evaluation map $H^0(X, F')\otimes_k \O_X\to F'$.
\end{lemma}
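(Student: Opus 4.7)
Since the map in (\ref{normalseq}) and the evaluation map $H^0(X, F') \otimes_k \O_X \to F'$ share the source $\O_X^{\oplus 5}$, the two maps coincide if and only if the induced map on global sections $\bar\phi\colon k^5 = H^0(\O_X^{\oplus 5}) \to H^0(X, F')$ is an isomorphism. By part (A), $F'_{\bar k} \cong F$, so flat base change combined with the earlier computation $h^0(X_{\bar k}, F) = 5$ yields $h^0(X, F') = 5$. Thus I need only show $\bar\phi$ is injective, which by the long exact sequence of cohomology applied to (\ref{normalseq}) is equivalent to $H^0(X, N_{X/\p^5}(-2)) = 0$.

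The strategy is to reduce this vanishing to a Grassmannian computation via Serre duality and two conormal sequences. Serre duality on the Gorenstein surface $X$ (with $\omega_X = \O_X(-1)$) gives $H^0(X, N_{X/\p^5}(-2))^{*} \cong H^2(X, I/I^2(1))$. Applying the twisted conormal sequence $0 \to I/I^2(1) \to \Omega_{\p^5}(1)|_X \to \Omega_X(1) \to 0$ for $X \hookrightarrow \p^5$, together with the restricted Euler sequence and the vanishings $H^i(X, \O_X(1)) = 0$ for $i \geq 1$ (which follow from Kawamata--Viehweg on $\wt X$ and rational singularities), one obtains $H^i(\Omega_{\p^5}(1)|_X) = 0$ for $i \geq 1$ and hence $H^2(X, I/I^2(1)) \cong H^1(X, \Omega_X(1))$. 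Next, using the realization $X \hookrightarrow \Gr(2, 5)$ from Lemma \ref{lsgr}, in which $X$ is cut out by four linear sections so that $N^{*}_{X/\Gr(2, 5)} \cong \O_X(-1)^{\oplus 4}$, the twisted conormal sequence $0 \to \O_X^{\oplus 4} \to \Omega_{\Gr(2, 5)}(1)|_X \to \Omega_X(1) \to 0$ together with $H^i(\O_X^{\oplus 4}) = 0$ for $i \geq 1$ yields $H^1(X, \Omega_X(1)) \cong H^1(X, \Omega_{\Gr(2, 5)}(1)|_X)$.

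It remains to show $H^1(X, \Omega_{\Gr(2, 5)}(1)|_X) = 0$, which I plan to establish by tensoring the Koszul resolution
\[0 \to \O_{\Gr}(-4) \to \O_{\Gr}(-3)^{\oplus 4} \to \O_{\Gr}(-2)^{\oplus 6} \to \O_{\Gr}(-1)^{\oplus 4} \to \O_{\Gr} \to \O_X \to 0\]
of $\O_X$ inside $\Gr(2, 5)$ with $\Omega_{\Gr(2, 5)}(1)$ and running the resulting hypercohomology spectral sequence. Borel--Weil--Bott on $\Gr(2, 5)$ (valid in characteristic $0$) shows that among $H^\bullet(\Gr, \Omega_{\Gr}(j))$ for $j \in \{-3, -2, -1, 0, 1\}$ the only nonvanishing group is $H^1(\Gr, \Omega_{\Gr}) \cong k$, so a single $k^{\oplus 4}$ term appears at position $(p, q) = (-1, 1)$, contributing only to $\mathbb{H}^0$ and forcing $H^i(X, \Omega_{\Gr(2, 5)}(1)|_X) = 0$ for $i \geq 1$. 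The principal obstacle is executing this Borel--Weil--Bott computation together with the spectral sequence bookkeeping; chaining back through the identifications then delivers $H^0(X, N_{X/\p^5}(-2)) = 0$ and hence the lemma.
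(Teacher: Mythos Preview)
Your proof is correct, but it takes a substantially different (and much heavier) route than the paper. The paper's argument, given right after Lemma~\ref{indec}, is a three-line indecomposability trick: if the image of $H^0(\O_X^{\oplus 5})\to H^0(F')$ had dimension $l<5$, the surjection $\O_X^{\oplus 5}\to F'$ would factor through $\O_X^{\oplus l}\to F'$, forcing a splitting $N_{X/\p^5}(-2)\cong \O_X^{\oplus(5-l)}\oplus K$; but Lemma~\ref{indec} says $N_{X/\p^5}|_C$ is indecomposable on the elliptic curve $C$, contradiction. So the paper simply reuses the indecomposability result already needed for part~(A).

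Your approach instead establishes the stronger vanishing $H^0(X,N_{X/\p^5}(-2))=0$ directly, via Serre duality, two conormal sequences, and a Koszul/Borel--Weil--Bott computation on $\Gr(2,5)$. This is self-contained in the sense that it does not invoke the auxiliary elliptic curve $C$ or Lemma~\ref{indec}, and it yields an explicit cohomological statement rather than an indirect contradiction. The cost is considerably more machinery: you need the l.c.i.\ conormal sequences to be exact (fine, since $X$ is l.c.i.), Kodaira--Akizuki--Nakano and BWB for the vanishing of $H^{\bullet}(\Gr,\Omega_{\Gr}(j))$ with $j\in\{-3,-2,-1,1\}$, and some care at the top-degree terms $H^5,H^6$ in the spectral sequence (e.g.\ $H^5(\Omega_{\Gr}(-3))=H^1(T_{\Gr}(-2))^*=0$), which you should state explicitly. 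Both arguments require characteristic~$0$, which is the standing hypothesis of \S\ref{coincide}.
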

\begin{proof}
Since $h^0(X, F')=h^0(X_{\bar{k}}, F)=5$, it is equivalent to show that the induced map $H^0(X, \O_X^{\oplus 5})\to H^0(X, F')$ is an isomorphism. Assume the image of the map has dimension $l$. Then $\O_X^{\oplus 5}\to F'$ factors through the surjection $\O_X^{\oplus l}\to F'$ and $K:=\ker(\O_X^{\oplus l}\to F')$ is a direct summand of $N_{X/\p^5}(-2)$. Since $F'$ is not a trivial bundle, $l\geqslant 3$. If $l<5$, then $K$ is a proper direct summand and thus $N_{X/\p^5}(-2)$ is decomposable. But by Lemma \ref{indec}, this is impossible.
\end{proof}
\begin{thm}\label{identical}
Let $f:\mathcal{X}\to S$ be a flat family of quintic del Pezzo surfaces over $k$ as in Definition \ref{qdpfdef}. Let $\O(1)=\omega_{\mathcal{X}/S}^{-1}$ and $m: \p_S((f_*\O(1))^*)\to S$ be the projection. Let $\mathcal{I}$ be the ideal sheaf of the anticanonical embedding $\mathcal{X}\to \p_S((f_*\O(1))^*)$ and $N_{\mathcal{X}/\p_S}$ be the normal bundle. Let $\mathcal{F}'$ be the rank $2$ vector bundle defined by (\ref{normalseqfamily}). Then

(i) $\mathcal{F}'\cong \mathcal{E}_2$ is the universal family of $\M_2(\mathcal{X}/S)$ (up to the pull-back of a line bundle on $\M_2(\mathcal{X}/S)$).

(ii) $(m_*\mathcal{I}(2))^*\cong f_*\mathcal{F}'$ and the map $f^*(m_*\mathcal{I}(2))^*\to \mathcal{F}'$ is the evaluation map. In particular, we have a short exact sequence
\[0\to N_{\mathcal{X}/\p_S}(-2)\to f^*f_*\mathcal{E}_2\to \mathcal{E}_2\to 0\]

(iii) The linear section structures constructed in (\ref{first}) and (\ref{second}) are isomorphic. Hence, Theorem \ref{main2} and \ref{main3} are the same. In particular, $g''=g'=g: \mathcal{Z}''\cong \mathcal{Z}'\cong \mathcal{Z}\to S$.
\end{thm}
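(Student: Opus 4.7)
The plan is to globalize parts (A) and (B), which handle the case of a single surface, by a standard cohomology-and-base-change argument, and then read off part (iii) by comparing the two Grassmannian-bundle constructions.

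For part (i), I would first observe that by Proposition \ref{msf}(i) the universal family $\mathcal{E}_2$ of $\M_2(\mathcal{X}/S)\cong S$ restricts on every geometric fiber $\mathcal{X}_{\bar s}$ to the vector bundle $F$, while part (A) of the discussion (i.e.\ the corollary $F'_{\bar k}\cong F$) shows that the same is true for $\mathcal{F}'$. Both $\mathcal{F}'$ and $\mathcal{E}_2$ are therefore flat $S$-families of stable sheaves with Hilbert polynomial $h_2(t)$ on the fibers of $f$, so the universal property of the fine moduli space $\M_2(\mathcal{X}/S)\cong S$ supplies an $S$-morphism $S\to S$ together with an isomorphism $\mathcal{F}'\cong\mathcal{E}_2\otimes f^*\mathcal{L}$ for some line bundle $\mathcal{L}$ on $S$. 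Taking determinants and comparing with $\det(\mathcal{F}')=\omega_{\mathcal{X}/S}^{-1}=\det(\mathcal{E}_2)$, one sees that $\mathcal{L}$ has order dividing $2$, and after replacing $\mathcal{E}_2$ by its twist (which does not affect any statement about the fine moduli space) we may assume $\mathcal{F}'\cong\mathcal{E}_2$.

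For part (ii), I would use cohomology and base change. On each geometric fiber $\mathcal{X}_{\bar s}$, $h^0(F)=5$ and $h^{>0}(F)=0$ by the global generation lemma of \S\ref{global generation}, so $f_*\mathcal{F}'$ is locally free of rank $5$ on $S$, its formation commutes with arbitrary base change, and the evaluation map $f^*f_*\mathcal{F}'\to\mathcal{F}'$ is fiberwise the evaluation map $H^0(X,F')\otimes\O_X\to F'$. By part (B) this fiberwise map is surjective with kernel $N_{X/\p^5}(-2)$, hence on the family the evaluation map is surjective with kernel a locally free sheaf of rank $3$ which fiberwise agrees with $N_{\mathcal{X}/\p_S}(-2)$, and therefore equals it. Comparing the resulting short exact sequence
\[
0\to N_{\mathcal{X}/\p_S}(-2)\to f^*f_*\mathcal{F}'\to\mathcal{F}'\to 0
\]
with (\ref{normalseqfamily}) and using that the first term is determined by the surface alone, we obtain a canonical isomorphism $(m_*\mathcal{I}(2))^*\cong f_*\mathcal{F}'$ under which the two surjections are identified.

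For part (iii), once (i) and (ii) are in hand, the two embeddings (\ref{first}) and (\ref{second}) are literally the same: the Grassmannian bundle $\Gr_S(2,(f_*\mathcal{E}_2)^*)$ equals $\Gr_S(2,m_*\mathcal{I}(2))$ because their defining rank-$5$ bundles are canonically identified, the subspace $\p_S((f_*\omega_{\mathcal{X}/S}^{-1})^*)$ is the same on both sides, and the dual linear bundle sections $\mathcal{Z}'$ and $\mathcal{Z}''$ coincide. Combining this with Theorem \ref{main2}, which already identifies $\mathcal{Z}'$ with $\mathcal{Z}$ via Morita equivalence, yields $\mathcal{Z}''\cong\mathcal{Z}'\cong\mathcal{Z}$ over $S$ and $g''=g'=g$. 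The main technical point to watch is the determinant/twist ambiguity in step (i); everything else is a matter of running cohomology and base change and then quoting parts (A), (B), and Proposition \ref{msf}.
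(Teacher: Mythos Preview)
Your proposal is correct and follows essentially the same approach as the paper: the paper's proof simply says that (i) and (ii) follow from the results on fibers of $f$ (i.e.\ parts (A) and (B) established in \S\ref{coincide}) together with the natural map $(m_*\mathcal{I}(2))^*\to f_*\mathcal{F}'$ coming from the adjunction of (\ref{normalseqfamily}), and that (iii) is a consequence of (i) and (ii). Your write-up is a faithful expansion of this outline; note only that the determinant discussion in step (i) is unnecessary, since the statement already allows $\mathcal{F}'\cong\mathcal{E}_2$ up to a line bundle pulled back from $\M_2(\mathcal{X}/S)\cong S$.
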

\begin{proof}
Note that by definition there is a natural map $(m_*\mathcal{I}(2))^*\to f_*\mathcal{F}'$. (i)(ii) follow from the results on  fibers of $f$ and (iii) is a consequence of (i)(ii).
\end{proof}
%-----------Appendix: Grassmannians in Arbitrary Characteristic---------
\appendix
\section{Grassmannians in Arbitrary Characteristic}
Let $k$ be an arbitrary field and $V$ be a $k$-vector space of dimension $n$. Let $\R$ be the universal subbundle of $\Gr(r, V)$ of rank $r$ and $\R^{\perp}$ be the kernel of the evaluation map $\O_{\Gr(r, V)}\otimes_k V^*\to \R^*$. We call  $\alpha=[\alpha_1, \dots,\alpha_n]$ a weight if all $\alpha_i\in\mathbb{Z}$. A weight $\alpha$ is \textit{dominant} if $\alpha_1\geqslant\dots\geqslant\alpha_n$ and is a \textit{partition} if in addition $\alpha_n\geqslant 0$. For a partition $\alpha$, it corresponds to a Young diagram with $\alpha_i$ boxes in the $i$-th row. Write $\alpha'$ for the Young diagram transposed to $\alpha$ and $|\alpha|=\sum_i \alpha_i$ for the \textit{degree}. Denote the Schur and Weyl functors by $L^{\alpha}, K^{\alpha}$ respectively and when $\alpha$ is a partition, they are defined by
\[L^\alpha V=\mathrm{im}(\bigotimes_i\bigwedge^{\alpha'_i}V\overset{a^{\vee}}{\longrightarrow}V^{\otimes |\alpha|}\overset{s}{\longrightarrow}\bigotimes_i\Sym^{\alpha_i} V),\]
\[K^\alpha V=\mathrm{im}(\bigotimes_i D^{\alpha_i}V\overset{s^\vee}{\longrightarrow} V^{\otimes |\alpha|}\overset{a}{\longrightarrow}\bigotimes_i\bigwedge^{\alpha'_i}V)\]
where $D^{\alpha_i} V$ is the divided power and $a,s$ are antisymmetrization and symmetrization maps respectively. In general, if $\alpha$ is only dominant, then $L^\alpha V$ is defined by $L^{\bar{\alpha}}V\otimes_k \bigwedge^{-\alpha_n} V^*$ where $\bar{\alpha}=[\alpha_1-\alpha_n,\dots, \alpha_{n-1}-\alpha_n,0]$ and $K^\alpha$ is defined similarly. \cite[Lemma 2.2]{blvdbgr} implies that for a dominant weight $\alpha$,
\[L^{\alpha_1,\dots,\alpha_n}V^*=(K^\alpha V)^*=L^{-\alpha_n,\dots,-\alpha_1}V.\]

In positive characteristic, Borel-Weil-Bott's theorem is only partially valid. Kempf vanishing theorem suggests that for dominant weights $\gamma=[\gamma_1,\dots,\gamma_r], \beta=[\beta_1,\dots,\beta_{n-r}]$, if $\gamma_r\geqslant \beta_1$, then
\begin{equation}
H^i(\Gr(r, V),L^{\gamma}\R^*\otimes L^\beta \R^\perp)=
\left\{
\begin{array}{ll}
L^{\gamma_1,\dots,\gamma_r, \beta_1,\dots,\beta_{n-r}} V^* & i=0\\
0, & i>0\\
\end{array}
\right.
\end{equation}

We will point out that in fact, the proof of Proposition 1.4 in \cite{blvdbgr} provides the following algorithm for some additional vanishing of cohomologies:
\begin{prop}
Let $\p=\p_k^{n-r}$ and $\mathcal{N}$ be the kernel of the evaluation map $\O_{\p}^{n-r+1}\to\O_{\p}(1)$, i.e. $\mathcal{N}=\Omega_\p(1)$. If all cohomologies $H^\bullet(\p, \O_\p(\gamma_r)\otimes L^\beta\mathcal{N})$ vanish, then $H^i(\Gr(r, V),L^{\gamma}\R^*\otimes L^\beta \R^\perp)=0$ for all $i$.
\end{prop}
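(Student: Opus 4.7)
The plan is to reduce the cohomology computation on $\Gr(r, V)$ to a fiberwise computation on $\p^{n-r}$, following the method in the proof of \cite[Proposition 1.4]{blvdbgr}. The key geometric input is the two-step flag variety $\mathrm{Fl} := \mathrm{Fl}(r-1, r; V)$, which sits between $\Gr(r, V)$ and $\Gr(r-1, V)$: the projection $p_2 : \mathrm{Fl} \to \Gr(r, V)$ is a $\p^{r-1}$-bundle satisfying $Rp_{2*}\O_{\mathrm{Fl}} = \O_{\Gr(r, V)}$, so the cohomology of any sheaf $\mathcal{F}$ on $\Gr(r, V)$ agrees with that of $p_2^*\mathcal{F}$ on $\mathrm{Fl}$. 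The other projection $p_1 : \mathrm{Fl} \to \Gr(r-1, V)$ is a $\p^{n-r}$-bundle with fiber $\p(V/U_{r-1})$ over $U_{r-1}$, and it is along these fibers that the hypothesized vanishing on $\p^{n-r}$ will be used.

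On $\mathrm{Fl}$ the tautological flag $\R_{r-1} \subset \R_r \subset V_{\mathrm{Fl}}$ induces the short exact sequences
\[ 0 \to \O_{p_1}(1) \to p_2^*\R^* \to p_1^*\R_{r-1}^* \to 0, \qquad 0 \to p_2^*\R^\perp \to p_1^*\R_{r-1}^\perp \to \O_{p_1}(1) \to 0, \]
where $\O_{p_1}(1) = (\R_r/\R_{r-1})^*$ is the relative hyperplane bundle for $p_1$. Applying $L^\gamma$ and $L^\beta$ to these sequences, and using the integral Schur--Weyl filtration of a Schur functor applied to an extension (valid in arbitrary characteristic, cf.\ \cite{blvdbgr}), one obtains a filtration of $p_2^*(L^\gamma\R^* \otimes L^\beta \R^\perp)$ whose associated graded summands are of the form $\O_{p_1}(a) \otimes p_1^*(L^{\gamma'}\R_{r-1}^* \otimes L^{\beta'}\R_{r-1}^\perp)$ for dominant weights $\gamma',\beta'$ and integers $a$ determined by Pieri-type bookkeeping on $(\gamma,\beta)$.

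After applying $Rp_{1*}$, Kempf vanishing on $\Gr(r-1, V)$ annihilates every graded piece except the one in which the concatenation $[\gamma',\beta']$ remains dominant and no further reflection is possible; this is the piece with $a = \gamma_r$, $\gamma' = (\gamma_1,\dots,\gamma_{r-1})$ and $\beta' = \beta$. Computing cohomology of this surviving summand by the Leray spectral sequence for $p_1$ reduces to computing its restriction to a fiber $\p(V/U_{r-1}) \cong \p^{n-r}$: on that fiber the Euler sequence $0 \to \mathcal{N} \to \O^{\oplus(n-r+1)} \to \O_\p(1) \to 0$ identifies $p_2^*\R^\perp|_{\mathrm{fiber}}$ with $\mathcal{N}=\Omega_\p(1)$, and $\O_{p_1}(\gamma_r)|_{\mathrm{fiber}}$ with $\O_\p(\gamma_r)$, so the fiber cohomology is exactly $H^\bullet(\p^{n-r},\O_\p(\gamma_r)\otimes L^\beta\mathcal{N})$, which vanishes by hypothesis.

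The main obstacle will be the combinatorial bookkeeping: establishing the Schur--Weyl filtration with the correct integral graded pieces in positive characteristic, and verifying that on the base $\Gr(r-1, V)$ Kempf vanishing kills every summand other than the one distinguished above. Once this organizational step is carried out, the identification on the fibers of $p_1$ with $\O_\p(\gamma_r)\otimes L^\beta\mathcal{N}$ is a direct consequence of the Euler sequence, so the hypothesis closes the argument.
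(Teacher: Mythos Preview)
Your geometric setup via $\mathrm{Fl}(r-1,r;V)$ with the two projections $p_1,p_2$ is exactly the right framework and is indeed what the argument in \cite{blvdbgr} uses; the identification of $p_2^*\R^\perp$ with $\mathcal{N}$ on each fiber of $p_1$ is also correct.  However, two steps of your execution do not hold up.

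First, you should not filter the factor $L^\beta\R^\perp$ at all.  From the sequence $0\to p_2^*\R^\perp\to p_1^*\R_{r-1}^\perp\to\O_{p_1}(1)\to 0$ one obtains a filtration of $L^\beta(p_1^*\R_{r-1}^\perp)$ with graded pieces built from $L^{\beta'}(p_2^*\R^\perp)$, which is the wrong direction for your purpose.  The correct move is to leave $p_2^*L^\beta\R^\perp$ intact and filter only $p_2^*L^\gamma\R^*$ via $0\to\O_{p_1}(1)\to p_2^*\R^*\to p_1^*\R_{r-1}^*\to 0$; the branching rule then gives graded pieces $p_1^*L^{\gamma'}\R_{r-1}^*\otimes\O_{p_1}(a)$ with $\gamma'$ interleaving $\gamma$ and $a=|\gamma|-|\gamma'|$ ranging over the whole interval $[\gamma_r,\gamma_1]$, not just the single value $\gamma_r$.

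Second, your appeal to ``Kempf vanishing on $\Gr(r-1,V)$'' to annihilate all but one piece is not a valid step.  Kempf vanishing asserts that higher cohomology vanishes for dominant weights; it never kills $H^0$, and it says nothing in the non-dominant range.  So it cannot by itself dispose of the pieces with $a>\gamma_r$.  The hypothesis you are given concerns only the fiber cohomology at the minimal twist $a=\gamma_r$, which indeed kills the extremal piece $\gamma'=(\gamma_1,\dots,\gamma_{r-1})$ upon applying $Rp_{1*}$.  For the remaining pieces the argument in \cite{blvdbgr} proceeds by induction on $r$: after pushing forward along $p_1$, each such piece becomes a sheaf on $\Gr(r-1,V)$ of the same shape $L^{\gamma'}\R_{r-1}^*\otimes L^{\beta''}\R_{r-1}^\perp$, and one re-applies the proposition with $r$ replaced by $r-1$ (the base case $r=1$ being the tautology $\Gr(1,V)=\p^{n-1}$).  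Your sketch replaces this induction by a one-shot invocation of Kempf, and that is the genuine gap.
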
 
In particular, it indicates that \cite[Lemma 3.2(a)]{kaphom} still holds in arbitrary characteristic. But Lemma 3.2(b) in \textit{loc.cit.} may not be true in positive characteristic.
\begin{cor}
Let $\gamma=[\gamma_1,\dots, \gamma_r]$. Suppose $\gamma_1\geqslant\dots\gamma_r\geqslant -(n-r)$. Then 
\[
H^i(\Gr(r, V),L^{\gamma}\R^*)=
\left\{
\begin{array}{ll}
0, & i>0\\
0, & i=0, \gamma_r<0\\
L^{\gamma}V^*, & i=0, \gamma_r\geqslant 0\\
\end{array}
\right.
\]
\end{cor}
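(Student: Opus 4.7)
The plan is to derive this corollary directly from the preceding proposition by taking the second partition to be trivial. Set $\beta=[0,\dots,0]$ (of length $n-r$), so that $L^\beta \R^\perp \cong \O_{\Gr(r,V)}$ and $L^\beta \mathcal{N}\cong \O_{\p}$. The bundle whose cohomology we want is then just $L^\gamma \R^*$, and the hypothesis $\gamma_1\geqslant\cdots\geqslant\gamma_r\geqslant -(n-r)$ naturally splits into two regimes depending on the sign of $\gamma_r$.

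First I would handle the case $\gamma_r\geqslant 0$. Here $\gamma$ is a genuine partition and $\gamma_r\geqslant 0=\beta_1$, so Kempf vanishing applies directly, yielding $H^0(\Gr(r,V),L^\gamma\R^*)=L^\gamma V^*$ and higher cohomology zero. This is exactly the claim in the third branch of the corollary.

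Next I would treat the case $-(n-r)\leqslant \gamma_r\leqslant -1$. Here Kempf vanishing no longer applies to $\gamma$, but the proposition cited just above the corollary reduces everything to computing $H^\bullet(\p^{n-r},\O_\p(\gamma_r))$. Writing $m=n-r$, the standard cohomology of line bundles on $\p^m$ gives $H^0(\p^m,\O(d))=0$ for $d<0$, $H^i(\p^m,\O(d))=0$ for $0<i<m$, and $H^m(\p^m,\O(d))=H^0(\p^m,\O(-d-m-1))^\vee=0$ whenever $-d-m-1<0$, i.e.\ $d>-m-1$. Under the assumption $-m\leqslant \gamma_r\leqslant -1$ all three vanishings are satisfied, so every cohomology group of $\O_\p(\gamma_r)$ vanishes. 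The proposition then gives $H^i(\Gr(r,V),L^\gamma\R^*)=0$ for all $i$, which is the first two branches of the corollary.

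There is no real obstacle beyond bookkeeping: the whole content is to recognize that choosing $\beta=0$ in the proposition reduces the Grassmannian computation to line bundle cohomology on a projective space, and the bound $\gamma_r\geqslant -(n-r)$ is precisely the range in which all cohomology of $\O_{\p^{n-r}}(\gamma_r)$ vanishes (outside of the nonnegative range where Kempf vanishing takes over). Assembling the two cases finishes the proof.
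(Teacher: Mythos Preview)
Your proof is correct and is precisely the argument the paper intends: the corollary is stated without proof as an immediate consequence of the preceding proposition (with $\beta=0$) together with Kempf vanishing for the case $\gamma_r\geqslant 0$. Your bookkeeping on the cohomology of $\O_{\p^{n-r}}(\gamma_r)$ is exactly what is needed to see that the bound $\gamma_r\geqslant -(n-r)$ is the relevant one.
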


From now on, we will focus on the case $\dim_k V=5$. In arbitrary characteristic, it is unclear whether the collection
\begin{equation}\label{excep}
\langle\O,\R^*, \O(1),\R^*(1),\O(2),\R^*(2),\O(3), \R^*(3),\O(4),\R^*(4)\rangle
\end{equation}
is semiorthogonal because it requires Lemma 3.2(b) in \cite{kaphom}. We will prove below that it is actually a full exceptional collection of $\Gr(2,5)$ in large characteristic by producing the collection from Kapranov's collection via right mutations.

Assume $\mathrm{char}(k)=0$ or $\geqslant 5$. From \cite[Lemma 7.7]{blvdbgr}, one has 
\begin{equation}\label{ds}
(\R^*)^{\otimes 2}=\Sym^2\R^*\oplus \O(1),\quad (\R^*)^{\otimes 3}=\Sym^3\R^*\oplus (\R^*(1))^{\oplus 2}
\end{equation}
and for each decomposition, there are no RHom between their direct summands. In particular, we have 
\begin{equation}\label{ds1}
\R^*\otimes\Sym^2\R^*=\Sym^3\R^*\oplus \R^*(1).
\end{equation}
There is a semiorthogonal decomposition \cite[Corollary 7.8]{blvdbgr}
\[
\begin{array}{l}
D^b(\Gr(2, V))\\
=\langle\O,\R^*,\O(1), \Sym^2\R^*, \R^*(1), \Sym^3\R^*, \O(2), (\Sym^2\R^*)(1), \R^*(2), \O(3)\rangle.
\end{array}
\]
We perform the following right mutations:

(i) Move $\Sym^3\R^*$ to the rightmost and it becomes $\O(4)$;

(ii) Move $\Sym^2\R^*$ towards right past $\O(3)$ and it becomes $\R^*(3)$;

(iii) Move $(\Sym^2\R^*)(1)$ to the rightmost and it becomes $\R^*(4)$.

Note that we have $\R=\R^*(-1), \bigwedge^2 (\R^{\perp})^*=\R^{\perp}(1)$ and a short exact sequence
\begin{equation}\label{cex}
0\to \R^\perp\to V^*\otimes_k\O\to \R^*\to 0
\end{equation}
which induces filtrations
\begin{equation}\label{fil1}
0\to N\to\bigwedge^2 V^*\otimes_k \O\to\bigwedge^2\R^*=\O(1)\to 0,
\end{equation}
\begin{equation}\label{fil2}
0\to \bigwedge^2\R^\perp=(\R^\perp)^*(-1)\to N\to \R^\perp\otimes\R^*\to 0
\end{equation}
and a short exact sequence
\begin{equation}\label{dcex}
0\to \R\to V\otimes_k \O\to (\R^{\perp})^*\to 0
\end{equation}
which induces filtrations
\begin{equation}\label{dfil1}
0\to M\to \bigwedge^2 V\otimes_k\O\to \bigwedge^2(\R^{\perp})^*=\R^{\perp}(1)\to 0,
\end{equation}
\begin{equation}\label{dfil2}
0\to \bigwedge^2\R=\O(-1)\to M\to \R\otimes(\R^{\perp})^*\to 0.
\end{equation}

Below we give details for each step of right mutations. All diagrams are commutative with exact rows and columns. We denote coevaluation maps by coev.

(i.1): From direct computations, $\RHom(\Sym^3\R^*, \O(2))=0$, $\RHom(\Sym^3\R^*, (\Sym^2\R^*)(1)) = H^0(\Gr(2,V), \R^*) = V^*$. Hence, right mutations of the triple $(\Sym^3\R^*, \O(2), (\Sym^2\R^*)(1))$ is $(\O(2), (\Sym^2\R^*)(1), K_1)$ where $K_1$ is described as follows:

\begin{equation}\label{K1}
\begin{tikzcd}[column sep=tiny]
& & 0\arrow{d} & 0\arrow{d}\\
0\arrow{r} & \Sym^3\R^*\arrow{r}\arrow[equal]{d} & \R\otimes(\Sym^2\R^*)(1)=\R^*\otimes\Sym^2\R^*\arrow{r}\arrow{d} & \R^*(1)\arrow{r}\arrow{d} & 0\\
0\arrow{r} & \Sym^3\R^*\arrow{r}{\text{coev}} & V\otimes (\Sym^2\R^*)(1)\arrow{r}\arrow{d} & K_1\arrow{r}\arrow{d} & 0\\
& & (\R^{\perp})^*\otimes(\Sym^2\R^*)(1) \arrow[equal]{r}\arrow{d} & (\R^{\perp})^*\otimes(\Sym^2\R^*)(1)\arrow{d}\\
& & 0 & 0\\
\end{tikzcd}
\end{equation}
The first row comes from the decomposition (\ref{ds1}). The middle row is the sequence defining $K_1$. The middle column is (\ref{dcex}) tensoring with $(\Sym^2\R^*)(1)$. 

(i.2) The middle row of (\ref{K1}) implies $\RHom(K_1, \R^*(2))=\bigwedge^2V^*$. Right mutation of the pair $(K_1, \R^*(2))$ is $(\R^*(2), K_2)$ with $K_2$ described as follows:

\begin{equation}\label{K2}
\begin{tikzcd}
& 0\arrow{d} & 0\arrow{d}\\
& K_1\arrow[equal]{r}\arrow{d} & K_1\arrow{d}{\text{coev}}\\
0\arrow{r} & M\otimes \R^*(2)\arrow{r}\arrow{d} & \bigwedge^2 V\otimes\R^*(2)\arrow{r}\arrow{d} & \R^{\perp}\otimes\R^*(3)\arrow{r}\arrow[equal]{d} & 0\\
0\arrow{r} & (\R^{\perp})^*(2)\arrow{r}\arrow{d} & K_2\arrow{r}\arrow{d} & \R^{\perp}\otimes\R^*(3)\arrow{r} & 0\\
& 0 & 0\\
\end{tikzcd}
\end{equation}
The middle column is the sequence defining $K_2$. The middle row is (\ref{dfil1}) tensoring with $\R^*(2)$. The first column is the middle column below:

\begin{equation}
\begin{tikzcd}
& & 0\arrow{d} & 0\arrow{d}\\
0\arrow{r} & \R^*(1)\arrow{r}\arrow[equal]{d} & K_1\arrow{r}\arrow{d} & (\R^\perp)^*(1)\otimes\Sym^2\R^*\arrow{r}\arrow{d} & 0\\
0\arrow{r} & \R^*(1)\arrow{r} & M\otimes\R^*(2)\arrow{r}\arrow{d} & (\R^\perp)^*(1)\otimes(\R^*)^{\otimes 2}\arrow{r}\arrow{d} & 0\\
& & (\R^\perp)^*(2)\arrow[equal]{r}\arrow{d} & (\R^\perp)^*(2)\arrow{d}\\
& & 0 & 0\\
\end{tikzcd}
\end{equation}
The first row is the last column from (\ref{K1}). The second row is (\ref{dfil2}) tensoring with $\R^*(2)$. The last column comes from the decomposition of $(\R^*)^{\otimes 2}$ tensoring with $(\R^\perp)^*(1)$.

(i.3) Comparing the last row of (\ref{K2}) with (\ref{fil2}), one has $K_2=N(3)$. The sequence (\ref{fil1}) implies that the right mutation of $(K_2, \O(3))$ is $(\O(3), \O(4))$. 

(ii.1) $\RHom(\Sym^2\R^*,\R^*(1))=H^0(\Gr(2,V),\R^*)=V^*$. Right mutation of $(\Sym^2\R^*, \R^*(1))$ is $(\R^*(1), L)$ with $L$ described as follows:

\begin{equation}\label{L}
\begin{tikzcd}
& & 0\arrow{d} & 0\arrow{d}\\
0\arrow{r} & \Sym^2\R^*\arrow{r}\arrow[equal]{d} & \R\otimes\R^*(1)=(\R^*)^{\otimes 2}\arrow{r}\arrow{d} & \O(1)\arrow{r}\arrow{d} & 0\\
0\arrow{r} & \Sym^2\R^*\arrow{r}{\text{coev}} & V\otimes \R^*(1)\arrow{r}\arrow{d} & L\arrow{r}\arrow{d} & 0\\
& & (\R^\perp)^*\otimes\R^*(1)\arrow[equal]{r}\arrow{d} & \R\otimes(\R^\perp)^*(2)\arrow{d}\\
& & 0 & 0\\
\end{tikzcd}
\end{equation}
The first row comes from the decomposition of $(\R^*)^{\otimes 2}$. The middle row is the sequence defining $L$. The middle column is (\ref{dcex}) tensoring with $\R^*(1)$. 

(ii.2) Comparing the last column of (\ref{L}) with (\ref{dfil2}), one has $L=M(2)$ and $\RHom(L,\O(2))=\bigwedge^2V^*$. The sequence (\ref{dfil1}) tensoring with $\O(2)$ implies that the right mutation of $(L, \O(2))$ is $(\O(2), \R^\perp(3))$.

(ii.3) $\RHom(\R^\perp(3), (\Sym^2\R^*)(1))=\RHom(\R^\perp(3),\R^*(2))=0$. The sequence (\ref{cex}) tensoring with $\O(3)$ implies that the right mutation of $(\R^\perp(3), \O(3))$ is $(\O(3),\R^*(3))$.

(iii) follows from (ii.1-3).

One should note that it is important that we have direct sum decompositions (\ref{ds})(\ref{ds1}), namely the factors of the natural filtrations of $(\R^*)^{\otimes 2}, (\R^*)^{\otimes 3}$ and $\R^*\otimes \Sym^2\R^*$ are in fact direct summands. On one hand, it enables us to make cohomological computations involving $\Sym^3\R^*$. On the other hand, the argument above uses the short exact sequences coming from the splitting maps of the filtrations several times. Thus, one would expect that the collection (\ref{excep}) may not be semiorthogonal in characteristic $2$ or $3$. We summarize the result below.
\begin{prop}\label{lefschetzgr}
Let $k$ be a field and $\mathrm{char}(k)\neq 2,3$. Then $\Gr(2,5)$ over $k$ has a semiorthogonal decomposition (of rectangular Lefschetz type)
\[D^b(\Gr(2,5))=\langle\O,\R^*, \O(1),\R^*(1),\O(2),\R^*(2),\O(3), \R^*(3),\O(4),\R^*(4)\rangle\]
where $\R$ is the universal subbundle of rank $2$.
\end{prop}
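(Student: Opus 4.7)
The plan is to start from the known Kapranov-type full exceptional collection
\[
\langle\O,\R^*,\O(1), \Sym^2\R^*, \R^*(1), \Sym^3\R^*, \O(2), (\Sym^2\R^*)(1), \R^*(2), \O(3)\rangle
\]
valid in characteristic $0$ or $\geqslant 5$ (from \cite[Corollary 7.8]{blvdbgr}), and transform it into the target collection by a sequence of right mutations. Mutations preserve fullness and the property of being a semiorthogonal collection, so it suffices to track what each mutation produces on the level of objects. The natural strategy is to eliminate the Schur-type terms $\Sym^2\R^*$, $\Sym^3\R^*$, and $(\Sym^2\R^*)(1)$ one at a time by sliding them to the right, using the direct-sum decompositions
\[
(\R^*)^{\otimes 2}=\Sym^2\R^*\oplus \O(1),\qquad \R^*\otimes\Sym^2\R^*=\Sym^3\R^*\oplus \R^*(1)
\]
that hold precisely when $\mathrm{char}(k)\neq 2,3$.

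First, I would move $\Sym^3\R^*$ to the far right in three substeps. The Hom computations reduce via Kempf vanishing (in the form of the corollary stated above) to identifying $\mathrm{RHom}(\Sym^3\R^*,\Sym^2\R^*(1))=V^*$ and $\mathrm{RHom}(\Sym^3\R^*,\O(2))=0$. The coevaluation map $\Sym^3\R^*\to V\otimes\Sym^2\R^*(1)$ fits into a diagram combining the direct-sum decomposition $\R^*\otimes\Sym^2\R^*=\Sym^3\R^*\oplus \R^*(1)$ with the tautological sequence $0\to\R\to V\otimes\O\to(\R^\perp)^*\to 0$, and the resulting mutation cone $K_1$ can be placed into a further diagram using $0\to M\to\bigwedge^2 V\otimes\O\to\R^\perp(1)\to 0$; comparing $K_1\to M(2)$ with the filtration of $N(3)$ from the analogous sequences on $\R^\perp$ shows that the next mutation yields $N(3)$ and then $\O(4)$. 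Next I would move $\Sym^2\R^*$ past $\O(3)$ using the analogous two-step diagram with the decomposition $(\R^*)^{\otimes 2}=\Sym^2\R^*\oplus\O(1)$, producing $M(2)$ and then $\R^\perp(3)$; mutating past $\O(3)$ via the tautological sequence gives $\R^*(3)$. Step (iii) for $(\Sym^2\R^*)(1)$ is obtained from step (ii) by twisting, producing $\R^*(4)$.

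Throughout, the key cohomological inputs are Kempf vanishing for dominant weights together with the vanishing corollary stated before the proposition; these together let one compute $\mathrm{RHom}$ groups between the Schur bundles appearing in the chain, and also guarantee that the "outer" mutations (e.g. $(\R^\perp(3),\O(3))\rightsquigarrow(\O(3),\R^*(3))$) act as expected via twisting of the tautological Koszul sequence. Each mutation cone is identified on the nose by constructing an explicit commutative $3\times 3$ diagram with exact rows and columns, whose left column is a sequence defining the mutation and whose middle row or column is one of the tautological or Koszul-type sequences.

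The main obstacle, and the reason for the restriction $\mathrm{char}(k)\neq 2,3$, is the need for the tensor products $(\R^*)^{\otimes 2}$, $(\R^*)^{\otimes 3}$, and $\R^*\otimes \Sym^2\R^*$ to split as direct sums rather than only carry natural filtrations; the mutation diagrams repeatedly use the splitting projections, not just the filtration maps. Outside this range one can still write down the relevant filtrations, but the mutation triangles may fail to split in the required way, so the end result need not be a semiorthogonal collection. Once these splittings are in hand the rest is essentially a bookkeeping exercise in diagram-chasing, and fullness of the final collection follows automatically from the fullness of Kapranov's collection since mutations preserve generation.
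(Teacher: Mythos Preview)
Your proposal is correct and follows essentially the same approach as the paper: start from the Kapranov-type collection of \cite[Corollary 7.8]{blvdbgr} and perform the same three sequences of right mutations on $\Sym^3\R^*$, $\Sym^2\R^*$, and $(\Sym^2\R^*)(1)$, using the direct-sum splittings of $(\R^*)^{\otimes 2}$ and $\R^*\otimes\Sym^2\R^*$ available when $\mathrm{char}(k)\neq 2,3$. The only imprecision is in the bookkeeping of intermediate cones for step (i) (the paper introduces a second cone $K_2$ and identifies $K_2=N(3)$ rather than comparing $K_1$ directly with $M(2)$), but the strategy, the key sequences, and the reason for the characteristic restriction all match the paper's argument.
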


\bibliography{dp5.bib}
\bibliographystyle{abbrv}
\end{document}